\definecolor{mred}{rgb}{.7,.0,.0}
\definecolor{dred}{rgb}{.5,.0,.0}
\definecolor{mblue}{rgb}{.0,.0,.8}
\definecolor{dblue}{rgb}{.0,.0,.5}
\definecolor{mgreen}{rgb}{.0,.6,.0}
\definecolor{dgreen}{rgb}{.0,.4,.0}
\definecolor{dmagenta}{rgb}{.4,.1,.5}
\newcommand{\ftn}[1]{\footnote{\bfseries\color{mred}#1}}
\renewcommand{\ftn}[1]{\relax}
\newcommand{\sW}{{\mathscr{W}}}
\numberwithin{equation}{section}
 \numberwithin{dummy}{section}
\theoremstyle{plain}
\newtheorem{thm}{Theorem}[section]
\newtheorem{lem}{Lemma}[section]
\newtheorem{proposition}{Proposition}[section]
\theoremstyle{definition}
\newtheorem{definition}{Definition}[section]
\theoremstyle{remark}
\newtheorem{remark}{Remark}[section]
\newcommand{\stkout}[1]{\ifmmode\text{\sout{\ensuremath{#1}}}\else\sout{#1}\fi}
\crefname{section}{Section}{Sections}
\crefname{subsection}{Subsection}{Subsections}
\crefname{notation}{Notation}{Notations}
\crefname{hypothesis}{Hypothesis}{Conditions}
\crefname{assumption}{Assumption}{Assumptions}
\crefname{lemma}{Lemma}{Lemmas}
\crefname{lem}{Lemma}{Lemmas}
\crefname{cor}{Corollary}{Corollaries}
\crefname{thm}{Theorem}{Theorems}
\crefname{claim}{Claim}{Claims}
\Crefname{figure}{Figure}{Figures}
\newcommand{\process}[1]{{\{#1(t)\}_{t\ge0}}}
\newcommand{\ttup}[1]{\textup{(}#1\textup{)}}
\newcommand{\Ind}{\mathds{1}} % indicator function
\newcommand{\tUsm}{\widetilde{\mathfrak{U}}_{\mathrm{sm}}} % stationary Markov controls
\newcommand{\Exp}{{\mathbb{E}}} %Expectation
\newcommand{\Prob}{{\mathbb{P}}} %Probability
\newcommand{\RR}{{\mathbb R}} % Real line
\newcommand{\NN}{{\mathbb N}} % Natural numbers
\newcommand{\Z}{\mathbb{Z}} % Integers
\newcommand{\D}{\mathrm{d}} % Differential
\newcommand{\E}{\mathrm{e}} % Exponent
\newcommand{\df}{\coloneqq} % define
\newcommand{\Id}{\mathbb{I}}
\newcommand{\sB}{{\mathscr{B}}} % Ball in \Rd
\newcommand{\cB}{{\mathcal{B}}} % Borel functions
\newcommand{\cM}{{\mathcal{M}}} % Matrices
\newcommand{\Usm}{\mathfrak{U}_{\mathrm{SM}}} % stationary Markov controls
\newcommand{\X}{\mathbb{X}}
\newcommand{\Lg}{{\mathcal{L}}}
\newcommand{\Lyap}{{\mathscr{V}}}
\newcommand{\cP}{{\mathcal{P}}} % probability measures
\newcommand{\T}{{\mathbb{T}}} 
\newcommand{\abs}[1]{\lvert#1\rvert}
\newcommand{\norm}[1]{\lVert#1\rVert}
\newcommand{\babs}[1]{\bigl\lvert#1\bigr\rvert}
\newcommand{\bnorm}[1]{\bigl\lVert#1\bigr\rVert}
\DeclareMathOperator*{\diag}{diag}
\DeclareMathOperator{\trace}{Tr}
\DeclareMathOperator{\Lip}{Lip}
\begin{document}
%%%%%%%%%%%%%%%%%%%%%%%%%%%%%%%%%%%%%%%%%%%%%%%%%%%%%%%%%%%%%%%%%%%%%
\begin{frontmatter}

\title{Subexponential upper and lower bounds in Wasserstein distance 
for Markov processes}
\runtitle{Subexponential upper and lower bounds in Wasserstein Distance}

\begin{aug}
\author{\fnms{Nikola} \snm{Sandri{\'c}}\thanksref{c}\ead[label=e1]{nsandric@math.hr}},
\author{\fnms{Ari} \snm{Arapostathis}\thanksref{a}\ead[label=e2]{ari@utexas.edu}}
\and
\author{\fnms{Guodong} \snm{Pang}\thanksref{b}\ead[label=e3]{gdpang@rice.edu}}

\runauthor{N.~Sandri{\'c},  A.~Arapostathis,  and G.~Pang}

\affiliation{University of Zagreb\thanksmark{m1}, University of Texas at Austin\thanksmark{m2},\\ and
Pennsylvania State University\thanksmark{m3}}

\address[c]{Department of Mathematics,
	University of Zagreb\\ Bijeni\v{c}ka cesta 30,
	10000 Zagreb, Croatia\\
	\printead{e1}}

\address[a]{Department of Electrical and Computer Engineering\\
University of Texas at Austin,
2501 Speedway, EER 7.824,
Austin, TX~78712\\
\printead{e2}}

\address[b]{Department of Computational and Applied Mathematics\\
Rice University,
Houston, TX~77005\\
\printead{e3}}

\end{aug}

%\date{\today}
%%%%%%%%%%%%%%%%%%%%%%%%%%%%%%%%%%%%%%%%%%%%%%%%%%%%%%%%%%%%%%%%%%%%%%%%%%%%%%%%
\begin{abstract}
In this article, relying on  Foster-Lyapunov drift conditions, we establish
subexponential upper and lower bounds on the rate of convergence in the
$\mathrm{L}^p$-Wasserstein distance for a
class of irreducible and aperiodic Markov processes.
We further discuss these results in the context of Markov L\'evy-type processes.
In the lack of irreducibility and/or aperiodicity properties, we obtain
exponential ergodicity in the  $\mathrm{L}^p$-Wasserstein distance for a class of
It\^{o} processes under an asymptotic flatness (uniform dissipativity) assumption.
Lastly,  applications of these results to specific processes are presented,
including Langevin tempered diffusion processes, piecewise Ornstein--Uhlenbeck
processes with jumps under constant and stationary Markov controls, and backward
recurrence time chains, for which  we provide a sharp characterization of the
rate of convergence via matching upper and lower bounds. 
\end{abstract}

\begin{keyword}[class=MSC]
\kwd[Primary ]{60J05; 60J25}
\kwd[; secondary ]{60H10; 60J75}
\end{keyword}

\begin{keyword}
\kwd{exponential and subexponential ergodicity}
\kwd{Wasserstein distance}
\kwd{It\^{o} process}
\kwd{Foster--Lyapunov condition}
\kwd{asymptotic flatness (uniform dissipativity)}
\kwd{Langevin diffusion process}
\kwd{Ornstein-Uhlenbeck process}
\end{keyword}

\end{frontmatter}

%%%%%%%%%%%%%%%%%%%%%%%%%%%%%%%%%%%%%%%%%%%%%%%%%%%%%%%%%%%%%%%%%%%%%%%%%%%%%%%%
%\tableofcontents

%%%%%%%%%%%%%%%%%%%%%%%%%%%%%%%%%%%%%%%%%%%%%%%%%%%%%%%%%%%%%%%%%%%%%%%%%%%%%%%%
%%%%%%%%%%%%%%%%%%%%%%%%%%%%%%%%%%%%%%%%%%%%%%%%%%%%%%%%%%%%%%%%%%%%%%%%%%%%%%%%
\section{Introduction}\label{S1}

One of the classical directions in the analysis of Markov processes centers
around their ergodic properties. In this article, we focus on both
qualitative and quantitative aspects of this problem.
Let $\X$ be a locally compact Polish space, i.e.
a  locally compact separable completely metrizable topological space.
Denote the corresponding metric by $\mathsf{d}$, and let $\T=\RR_+$ or $\Z_+$ be
the time parameter set. We endow $(\X,\mathsf{d})$ with its Borel $\sigma$-algebra
$\mathfrak{B}(\X)$. Further, let
$\bigl(\Omega,\mathcal{F}, \{\mathcal{F}_t\}_{t\in\T},\{\theta_t\}_{t\in\T},
\{X(t)\}_{t\in\T},\{\Prob_x\}_{x\in\X}\bigr)$, denoted by $\{X(t)\}_{t\in\T}$
in the sequel, be a time-homogeneous conservative strong Markov process with
c\`adl\`ag sample paths (when $\T=\RR_+$) and state space
$\bigl(\X,\mathfrak{B}(\X)\bigr)$, in the sense of \cite{BG-68}. 
Here, $(\Omega,\mathcal{F}, \Prob_x)_{x\in\X}$ is a family of probability spaces and $\{X(t)\}_{t\in\T}$ satisfies $\Prob_x(X(0)=x)=1$, $\{\mathcal{F}_t\}_{t\in\T}$ is a filtration on $(\Omega,\mathcal{F})$ (non-decreasing family of sub-$\sigma$-algebras of $\mathcal{F}$)  and $\{\theta_t\}_{t\in\T}$  is a family of shift operators on $\Omega$ satisfying $X(t)\circ \theta_s = X(t+s)$ for all $s, t\in\T$. Recall, $\{X(t)\}_{t\in\T}$ is said to be conservative if $\Prob_x(X(t)\in\X)=1$ for all $t\in\T$ and $x\in\X$.
In the present article, we present (sharp) sufficient conditions under which
$\{X(t)\}_{t\in\T}$ admits a unique invariant probability measure $\uppi(\D x)$,
and which ensure that the marginals of $\{X(t)\}_{t\in\T}$ converge to $\uppi(\D x)$,
as $t\to\infty$, in the $\mathrm{L}^{p}$-Wasserstein distance at 
exponential and subexponential rates.

%%%%%%%%%%%%%%%%%%%%%%%%%%%%%%%%%%%%%%%%%%%%%%%%%%%%%%%%%%%%%%%%%%%%%%%%%%%%%%%%
\subsection{Summary of the results}

Before stating the main results of this article, we introduce some notation we need
in the sequel.
Denote by $p(t,x,\D y)\df\Prob_x(X(t)\in\D y)$ for $t\in\T$ and $x\in\X$, the
transition kernel of $\{X(t)\}_{t\in\T}$.
We endow $\T$ with the standard (Euclidean Borel in the case when $\T=\RR_+$, and
discrete when $\T=\Z_+$) $\sigma$-algebra. The process $\{X(t)\}_{t\in\T}$ is called
\begin{enumerate}
	\item [\ttup{i}]
	irreducible if there exists a $\sigma$-finite measure $\upvarphi(\D x)$ on
	$\mathfrak{B}(\X)$ such that whenever $\upvarphi(B)>0$ we have
	$\int_{\T}p(t,x,B)\,\uptau(\D t)>0$ for all $x\in\X$, where $\uptau(\D t)$ stands
	for the Lebesgue measure on $\T$ when $\T=\RR_+$, and the counting measure
	when $\T=\Z_+$;
	\item [\ttup{ii}]
	transient if it is irreducible, and there exist
	$\{b_k\}_{k\in\NN}\subset[0,\infty)$ and
	a covering $\{B_k\}_{k\in\NN}\subseteq\mathfrak{B}(\X)$ of $\X$,
	such that
	$\int_{\T}p(t,x,B_k)\,\uptau(\D{t})\le b_k$ for all $x\in\X$ and $k\in\NN$;
	\item [\ttup{iii}]
	recurrent if it is irreducible, and $\upvarphi(B)>0$ implies that
	$\int_{\T}p(t,x,B)\,\uptau(\D{t})=\infty$ for all $x\in\X$;
	\item [\ttup{iv}]
	aperiodic if there exists $t_0>0$ such that $\{X_{kt_0}\}_{k\in\Z_+}$ is irreducible,
	in the case when $\T=\RR_+$; and
	there does not exist a partition $\{B_1,\dots,B_k\}\subseteq\mathfrak{B}(\X)$ with
	$k\ge2$ of $\X$ such that $p(1,x,B_{i+1}) = 1$ for all $x\in B_i$ and all
	$1 \le i \le k -1$, and $p(1,x, B_1) = 1$ for all $x \in B_k$,
	in the case when $\T=\Z_+$.\end{enumerate}
Let us remark that if $\{X(t)\}_{t\in\T}$ is irreducible, 
then it is either transient or recurrent (see \cite[Theorem~2.3]{Tweedie-1994}). 
A Borel measure $\uppi(\D x)$ on $\X$ is called invariant for
$\{X(t)\}_{t\in\T}$ if
$\int_\X p(t,x,\D y)\, \uppi(\D x)=\uppi(\D y)$ for all $t\in\T$.
It is well known that if $\{X(t)\}_{t\in\T}$ is
recurrent,  then it possesses a unique (up to constant multiples) invariant measure 
(see \cite[Theorem~2.6]{Tweedie-1994}).
If the invariant measure is
finite, then it may be normalized to a probability measure. If
$\{X(t)\}_{t\in\T}$ is recurrent with finite invariant measure, then it is called 
positive recurrent; otherwise it is called null recurrent. Note that a transient 
Markov process cannot have a finite invariant measure.
A set $C\in\mathfrak{B}(\X)$ is called petite for $\{X(t)\}_{t\in\T}$ if there
exist a probability measure  $\upchi(\D t)$ on $\T$ and a non-trivial Borel
measure $\upnu_\upchi(\D x)$ on $\X$, such that
\begin{equation*}\int_{\T}p(t,x,B)\,\upchi(\D t)\,\ge\,\upnu_\upchi(B)\end{equation*} for all $x\in C$ and $ B\in\mathfrak{B}(\X).$
Recall that petite sets play a role of singletons for Markov processes on general
state spaces (see \cite[Chapter~5]{Meyn-Tweedie-Book-2009} for a detailed discussion). 
Denote by $\cP(\X)$ the class of all Borel probability measures on $\X$, and for
$f\in\cB(\X)$ (the space of real-valued Borel measurable functions on $\X$) let
$\cP_f(\X)$ denote the class of all $\upmu\in\cP(\X)$ with the property that
$\int_{\X}\abs{f(x)}\,\upmu(\D{x})<\infty$.
When $f(x)=\bigl(\mathsf{d}(x_0,x)\bigr)^p$ for some $p>0$ and $x_0\in\X$,
we denote this as $\cP_p(\X)$ .
We adopt the usual notation
\begin{equation*}
	\upmu P_t(\D y)=\int_{\X}p(t,x,\D y)\,\upmu(\D{x})\,,\qquad\text{and}\qquad
	\upmu\bigl(f\bigr)=\int_{\X}f(x)\,\upmu(\D{x})
\end{equation*}
for $t\in\T$, $x\in\X$, $\upmu\in\cP(\X)$ and $f\in\cB(\X)$.
Therefore, with $\updelta_x$ denoting the Dirac measure concentrated
at $x\in\X$, we have $\updelta_x P_t(\D y) = p(t,x,\D y)$.
Finally, recall that the
$\mathrm{L}^p$-Wasserstein distance on $\cP_p(\X)$ with $p\ge1$ is defined by
\begin{equation*}%\label{Wp-distance}
	\sW_p(\upmu_1,\upmu_2)\,\df\,\inf_{\Pi\in\mathcal{C}(\upmu_1,\upmu_2)}
	\biggl( \int_{\X\times\X}\bigl(\mathsf{d}(x,y)\bigr)^{p}\,
	\Pi(\D{x},\D{y})\biggr)^{\nicefrac{1}{p}}\,,
\end{equation*}
where $\mathcal{C}(\upmu_1,\upmu_2)$ is the family of couplings of
$\upmu_1(\D x)$ and $\upmu_2(\D x)$,
i.e. $\Pi\in\mathcal{C}(\upmu_1,\upmu_2)$ if, and only if, $\Pi(\D x, \D y)$
is a probability
measure on $\X\times\X$ having $\upmu_1(\D x)$ and $\upmu_2(\D x)$ as its marginals.
It is well known that $\mathcal{P}_p(\X)$ is a 
complete separable metric space under the metric $\sW_p$
\cite[Theorem~6.18]{Villani-Book-2009}.
The topology generated by $\sW_p$ on $\cP_p(\X)$ is finer than the
Prokhorov topology, i.e. the topology of weak convergence.

We now state the main results of this article.

%%%%%%%%%%%%%%%%%%%%%%%%%%%%%%%%%%%%%%%%%%%%%%%%%%%%%%%%%%%%%%%%%%%%%%%%%%%%%%%%
\begin{thm}\label{T1.1}
	Suppose that $\{X(t)\}_{t\in\T}$ is irreducible and aperiodic, and
	there exist a continuous $\Lyap\colon\X\to[1,\infty)$,
	a constant $b>0$, a nondecreasing differentiable concave function
	$\phi\colon[1,\infty)\to(0,\infty)$,
	and a (topologically) closed petite set $C\subseteq\X$ such that 
	\begin{equation}\label{ET1.1A}
	\Exp_x\bigl[\Lyap(X(t))\bigr] - \Lyap(x) \,\le\,
	b\int_{[0,t)}\Exp_x\bigl[\Ind_C(X(s))\bigr]
	\uptau(\D{s}) - \int_{[0,t)} \Exp_x\bigl[\phi\circ\Lyap(X(s))\bigr]\uptau(\D{s})
	\end{equation}
for all $(t,x)\in T\times\X$.
	Assume further that $\sup_{x\in C}\Lyap(x)<\infty$, and
	\begin{equation}\label{ET1.1B}
	c\,\df\,\inf_{x\in\X}\,\dfrac{\phi\circ\Lyap(x)}
	{\bigl(1+\mathsf{d}(x,x_0)\bigr)^{\eta}} \,>\, 0
	\end{equation} 
	for some $\eta\ge1$ and some (and therefore any) $x_0\in\X$.
	Then $\{X(t)\}_{t\in\T}$ admits a unique invariant
	$\uppi\in\mathcal{P}_{\phi\circ\Lyap}(\X)$.
	In addition, with $\Phi(t)\df \int_1^t\frac{\D{s}}{\phi(s)}$ and
	$r(t) \df \phi\circ\Phi^{-1}(t)$, the following hold. 
	\begin{enumerate}
		\item [\ttup i]
		If $\displaystyle\lim_{t\to\infty}\phi'(t)=0$, then for some $\Bar c>0$
		we have
		\begin{align}
			\left(1\vee\bigl(r(t)\bigr)^{\nicefrac{(\eta-1)}{\eta}}\right)\,
			\sW_1\bigl(\updelta_x P_t,\uppi\bigr) &\,\le\,
			\Bar c\, \Lyap(x) \qquad \forall\,(t,x)\in \T\times\X\,,\label{ET1.1C}
			\intertext{and}
			\int_\T\left(1\vee\bigl(r(t)\bigr)^{\nicefrac{(\eta-1)}{\eta}}\right)\,
			\sW_1\bigl(\updelta_x P_t,\updelta_yP_t\bigr)\,\uptau(\D{t}) &\,\le\,
			\Bar c\,\bigl(\Lyap(x)+\Lyap(y)\bigr)\quad\forall\, x,y\in\X\,.\label{ET1.1D}
		\end{align}

		\item [\ttup{ii}]
		If $\displaystyle\lim_{t\to\infty}\phi'(t)=0$,
		then for any $p\in[1,\eta]$ there exists $\Tilde c>0$ such that 
		\begin{equation}\label{ET1.1E}
		\left(1\vee \left(t^{\nicefrac{(\eta-p)}{p}}\wedge t^{\nicefrac{(1-p)}{p}}\right)
		\bigl(r(t)\bigr)^{\nicefrac{(\eta-1)}{p\eta}}\right)\,
		\sW_p(\updelta_x P_t,\uppi)
		\,\le\, \Tilde c\, \bigl(\Lyap(x) + \overline{m}_{\eta}\bigr)
		\end{equation}
for all $(t,x)\in \T\times\X$,
where $\overline m_\eta=\uppi\bigl(\bigl(\mathsf{d}(x_0,\cdot\,)\bigr)^\eta\bigr)$.

\smallskip		
		\item[\ttup{iii}]
		If $\phi(t)=\hat c\,t$
		for some  $\hat c>0$, then there exist  $\check c>0$ and $\gamma>0$, such that 
		\begin{equation}\label{ET1.1F}
		\E^{\gamma t}\,\sW_1\bigl(\updelta_x P_t,\uppi\bigr)\,\le\,
		\check c\, \Lyap(x)\qquad \forall\,(t,x)\in \T\times\X\,.
		\end{equation}
		In addition, for any $p\in[1,\eta]$ there
		exists $\breve{c}>0$ such that
		\begin{equation}\label{ET1.1G}
		\bigl(1\vee t^{\nicefrac{\eta}{p}-1}\bigr)\,
		\sW_p(\updelta_x P_t,\uppi)
		\,\le\, \breve c\,\bigl(
		\Lyap(x)+\overline{m}_{\eta}\bigr)^{\nicefrac{1}{p}}
		\qquad \forall\,(t,x)\in \T\times\X\,.
		\end{equation}
	\end{enumerate}
\end{thm}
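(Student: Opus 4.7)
My plan is to combine a Tuominen--Tweedie / Douc--Fort--Guillin subgeometric ergodicity theorem in total variation with Kantorovich--Rubinstein duality and H\"older interpolation against uniform $\eta$-th moment bounds. The first step is TV ergodicity: the drift inequality \eqref{ET1.1A}, petiteness of $C$, aperiodicity, and $\sup_C\Lyap<\infty$ are the standard hypotheses of the subgeometric Foster--Lyapunov theorems, yielding existence and uniqueness of $\uppi\in\cP_{\phi\circ\Lyap}(\X)$ together with the TV bound $r(t)\bnorm{\updelta_xP_t-\uppi}_{\tv}\le c\Lyap(x)$ and the companion estimate $\bnorm{\updelta_xP_t-\uppi}_{\phi\circ\Lyap}\le c\Lyap(x)$. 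By \eqref{ET1.1B} the measure $\uppi$ lies in $\cP_\eta(\X)$, and the latter bound, evaluated at $\phi\circ\Lyap$ and combined with $\uppi(\phi\circ\Lyap)\le b$ (a consequence of the drift condition and invariance of $\uppi$), forces the uniform pointwise estimate $\sup_{t\in\T}\Exp_x\bigl[(1+\mathsf{d}(x_0,X_t))^\eta\bigr]\le C\Lyap(x)$. In part~\textup{(iii)}, $\phi(t)=\hat c t$ gives $r(t)=\hat c\E^{\hat c t}$ and classical Meyn--Tweedie geometric ergodicity in TV.

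For the $\sW_1$ estimates, I would use Kantorovich--Rubinstein duality: $\sW_1(\upmu,\upnu)\le\int(1+\mathsf{d}(x_0,y))\,\D\babs{\upmu-\upnu}(y)$. For $\eta>1$, H\"older with exponents $\eta$ and $\eta/(\eta-1)$ gives
\[
\sW_1(\upmu,\upnu)\,\le\,C\bnorm{\upmu-\upnu}_{\tv}^{(\eta-1)/\eta}\biggl(\int\bigl(1+\mathsf{d}(x_0,\cdot)\bigr)^\eta\,\D\babs{\upmu-\upnu}\biggr)^{1/\eta}.
\]
Applying this with $\upmu=\updelta_xP_t$, $\upnu=\uppi$, and inserting the TV rate and uniform moment from the first paragraph, yields \eqref{ET1.1C}. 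The integrated bound \eqref{ET1.1D} follows analogously with $\upnu=\updelta_yP_t$, using the integrated Douc--Fort--Guillin estimate $\int_\T r(t)\bnorm{\updelta_xP_t-\updelta_yP_t}_{\tv}\uptau(\D t)\le c(\Lyap(x)+\Lyap(y))$. Estimate \eqref{ET1.1F} is the exponential-rate specialization.

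For the $\sW_p$ estimates, I would use the ``maximal coupling'' (matching $\upmu\wedge\upnu$ on the diagonal with the leftover masses coupled independently), which produces $\sW_p^p(\upmu,\upnu)\le C\int\mathsf{d}(x_0,\cdot)^p\,\D\babs{\upmu-\upnu}$; H\"older with exponents $\eta/p$ and $\eta/(\eta-p)$ then gives $\sW_p^p\le C\bnorm{\upmu-\upnu}_{\tv}^{(\eta-p)/\eta}M^{p/\eta}$, where $M$ is the uniform $\eta$-th moment bound. A second estimate comes from the $\sW_1$ bound above together with the $L^p$-interpolation $\sW_p\le\sW_1^{(\eta-p)/(p(\eta-1))}M^{(p-1)/(p(\eta-1))}$ applied along the $\sW_1$-optimal coupling. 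Taking the pointwise minimum of the two bounds and collecting the $t$-dependent factors produces the prefactor of \eqref{ET1.1E}; the exponential case of the first paragraph analogously yields \eqref{ET1.1G}.

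The main technical obstacle is to pin down the sharp prefactor $t^{(\eta-p)/p}\wedge t^{(1-p)/p}$ in \eqref{ET1.1E}. Either H\"older estimate in the preceding paragraph in isolation only produces the factor $r(t)^{-(\eta-p)/(p\eta)}$, whereas the stated rate $r(t)^{(\eta-1)/(p\eta)}\bigl(t^{(\eta-p)/p}\wedge t^{(1-p)/p}\bigr)$ is strictly sharper in the relevant regimes. Producing it requires a careful interplay between the two interpolations and the linear (rather than $\nicefrac{1}{p}$-power) dependence of the right-hand side on $\Lyap(x)+\overline m_\eta$, together with careful bookkeeping of moment constants.
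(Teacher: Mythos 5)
Your route for \eqref{ET1.1C} works and is in substance the same as the paper's: where the paper applies \cite[Theorem~3.2]{Douc-Fort-Guilin-2009} with the Young pair $\Psi_1(z)=z^{(\eta-1)/\eta}$, $\Psi_2(z)=c^{-1/\eta}z^{1/\eta}$ and uses \eqref{ET1.1B} to dominate $\sW_1$ by the $\bigl(\Psi_2\circ(\phi\circ\Lyap)\vee1\bigr)$-norm via Kantorovich--Rubinstein, your H\"older interpolation between the two degenerate Young endpoints --- the TV rate and the uniform $\eta$th moment --- produces the same exponent $(\eta-1)/\eta$ on $r$. Two other points, though.

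For \eqref{ET1.1D}, ``interpolate pointwise, then integrate'' does not close. From $\int_\T r(t)\bnorm{\updelta_xP_t-\updelta_yP_t}_{\tv}\,\uptau(\D t)<\infty$ you cannot deduce $\int_\T\bigl(r(t)\bnorm{\updelta_xP_t-\updelta_yP_t}_{\tv}\bigr)^{(\eta-1)/\eta}\,\uptau(\D t)<\infty$, because raising to the power $(\eta-1)/\eta<1$ slows the decay of the integrand: $g(t)\sim(t\log^2 t)^{-1}$ is integrable near infinity but $g^{(\eta-1)/\eta}$ is not. The paper avoids this by invoking the integrated estimate \cite[(3.6)]{Douc-Fort-Guilin-2009} directly with the non-degenerate Young pair, so the combination $\bigl(\Psi_1\circ r\bigr)\cdot\bnorm{\cdot}_{\Psi_2\circ(\phi\circ\Lyap)\vee1}$ is already under the integral and nothing is interpolated after the fact.

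For \eqref{ET1.1E}, your self-assessment is wrong, but in the other direction. What the paper's proof in fact establishes is
\begin{equation*}
\Bigl(1\vee\bigl(t^{(\eta-p)/p}\wedge t^{(1-p)/p}\,r(t)^{(\eta-1)/(p\eta)}\bigr)\Bigr)\,\sW_p(\updelta_xP_t,\uppi)\,\le\,\Tilde c\,\bigl(\Lyap(x)+\overline m_\eta\bigr)^{\nicefrac{1}{p}}\,,
\end{equation*}
with $r(t)^{(\eta-1)/(p\eta)}$ inside the $\wedge$, and a short case split on whether $t^\eta\le r(t)$ shows this nested minimum is $\le r(t)^{(\eta-p)/(p\eta)}$ for every $t$. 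Consequently your H\"older bound $r(t)^{(\eta-p)/(p\eta)}\sW_p\le C\,\Lyap(x)^{\nicefrac{1}{p}}$, coming from $\sW_p^p\le2^{p-1}\bnorm{\cdot}_\tv^{(\eta-p)/\eta}M^{p/\eta}$, is uniformly at least as strong and already closes the estimate --- nothing is missing, and the ``interplay of the two interpolations'' you speculate about is a phantom. For the record, the paper's argument here is not an interpolation at all: it splits the coupling integral $\int\mathsf d(x,y)^p\,\Pi(\D x,\D y)$ over $\sB_t(x_0)\times\sB_t(x_0)$ (where $\mathsf d(x,y)^p\le(2t)^{p-1}\mathsf d(x,y)$ ties it to $\sW_1$) and the complement (controlled by the $\eta$th moment and Markov, producing $t^{p-\eta}$). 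Pegging the truncation radius to $t$ rather than optimizing it is precisely what introduces the extra power of $t$; optimizing the radius recovers exactly your H\"older rate.
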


The results in \cref{T1.1} should be compared to equations (2.3) and 
(2.5) in \cite[Theorems~2.1 and 2.4]{Butkovsky-14}
(see also \cite[Theorem~3\,\ttup{ii}]{DFM-16}  and \cite[Chapter 4]{Kulik-Book-2018}).
The underlying metric $\mathsf{d}$ is assumed to be bounded in \cite{Butkovsky-14}.
The starting point is a Foster-Lyapunov condition of the form
in \cref{ET1.1A}, and the irreducibility and aperiodicity assumptions
are replaced by a closely related structural property: the metric $\mathsf{d}$
is contracting, and the sublevel sets of $(x,y)\mapsto\Lyap(x)+\Lyap(y)$ are
$\mathsf{d}$-small (see (3) and (4) in \cite[Theorems~2.1 and 2.4]{Butkovsky-14}).
Then an analogous estimate to \cref{ET1.1C} holds for the corresponding
$\sW_1$-distance. Observe that when $\mathsf{d}$ is bounded,
the relation in \cref{ET1.1A} trivially holds for any $\eta\ge0$.
Provided $\{X(t)\}_{t\in\T}$ is irreducible and aperiodic,
this gives an analogous result to the one obtained in
\cite[Theorems~2.1 and 2.4]{Butkovsky-14} (in $\sW_1$-distance) without
assuming either contraction properties of $\mathsf{d}$ or $\mathsf{d}$-smallness
of the sublevel sets of $(x,y)\mapsto\Lyap(x)+\Lyap(y)$. 
The proof of \cref{T1.1} relies on \cite[Theorem~3.2]{Douc-Fort-Guilin-2009}
and \cite[Theorem~2.8]{DFMS-04}, where, under the assumptions of \cref{T1.1},
the authors show ergodicity
of $\{X(t)\}_{t\in\T}$ in the $f$-norm with rate $\Psi_1\circ r(t)$
and $f(x)=\Psi_2\circ \phi\circ\Lyap(x)\vee1$, for any pair
$(\Psi_1^{-1},\Psi_2^{-1})$ of Young's functions.
Recall, 	
 for a signed Borel measure $\upmu(\D x)$ on $\X$ and a function
$f\colon\X\to[1,\infty]$   the so-called $f$-norm of $\upmu(\D x)$ is defined as
\begin{equation}\label{EFNORM}\norm{\upmu}_f\,\df\,\sup_{g\in\cB(\X),\, \abs{g}\le f}\,
\babs{\upmu(g)}\,,\end{equation} generalizing the usual total variation norm $\norm{\upmu}_{\mathrm{TV}}\df\sup_{g\in\cB(\X),\, \abs{g}\le 1}\,
\babs{\upmu(g)}$.
We remark here that convergence in the $f$-norm does not in general imply
convergence in the $\sW_p$-distance, and \textit{vice versa} (see \cref{S3}
for examples of such Markov processes).

In the following theorem we establish a lower bound for $\sW_p$-convergence,
which matches the upper bounds obtained in \cref{ET1.1C,ET1.1E}.
For $\gamma\in C([0,1];\X)$ (the space of continuous mappings from $[0,1]$ to $\X$)
let
\begin{equation*}\Lambda(\gamma)\,\df\,
\sup_{k\in\NN}\,\sup_{0=u_0<u_1<\dotsb<u_{k-1}
	<u_k=1}\Big(\textsf{d}\bigl(\gamma(u_0),\gamma(u_1)\bigr)+\cdots+\textsf{d}
\bigl(\gamma(u_{k-1}),\gamma(u_k)\bigr)\Big)\,.\end{equation*}
The space $\X$ is called a length space if
\begin{equation*}\textsf{d}(x,y)\,=\,\inf_{\gamma\in C([0,1];\X)}
\left\{ \Lambda(\gamma)\colon \gamma(0)=x,\ \gamma(1)=y \right\}
\qquad \forall\, x,y\in\X\,.\end{equation*}

%%%%%%%%%%%%%%%%%%%%%%%%%%%%%%%%%%%%%%%%%%%%%%%%%%%%%%%%%%%%%%%%%%%%%%%%%%%%%%%%
\begin{thm}\label{T1.2}
	Assume that $\X$ is a length space, $\{X(t)\}_{t\in\T}$ satisfies \cref{ET1.1A},
	and there exist
	a Lipschitz continuous function $L\colon\X\to[0,\infty)$
	and constants $\theta>\vartheta\ge1$ and $c>0$, such that
\begin{equation*}\Lyap(x)\,\ge\, c\,\bigl(L(x)\bigr)^{\theta}\,,\quad\text{and}\quad
	\phi\circ\Lyap(x)\,\ge\, c\,\bigl(L(x)\bigr)^\vartheta\qquad \forall\,x\in\X\,.\end{equation*}
	In addition, suppose that $\{X(t)\}_{t\in\T}$ admits an invariant
	$\uppi\in\cP(\X)$ such that
$\int_{\X}\bigl(L(x)\bigr)^{\vartheta+\varepsilon}\,\uppi(\D x)=\infty$
	for some $\varepsilon\in(0,\theta-\vartheta)$.
	Then, for each $p\in[1,\vartheta]$, $\iota\in(0,\theta-\vartheta-\varepsilon)$
	and $x\in\X$, there exist a constant $\Bar c>0$ and a diverging increasing sequence
	$\{t_n\}_{n\in\NN}\subseteq\T$, depending on these parameters, such that
	\begin{equation}\label{ET1.2A}
	\sW_p(\updelta_x P_{t_n},\uppi)\,\ge\,\Bar c\,
	\bigl(t_n+\Lyap(x)\bigr)^{-\frac{\vartheta-p+\varepsilon+\iota}
		{(\theta-\vartheta-\varepsilon-\iota)p}}\qquad \forall\, n\in\NN\,.
	\end{equation}
\end{thm}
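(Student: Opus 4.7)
My plan is to reduce the $p$-Wasserstein lower bound to a one-dimensional Wasserstein computation via the Lipschitz function $L$, and then to exploit the quantile representation of the one-dimensional $\sW_p$ together with an upper tail bound on $\updelta_xP_t$ coming from the drift condition and a lower tail bound on $\uppi$ coming from the divergent-moment assumption. The length-space hypothesis and the Lipschitz continuity of $L$ (with constant, say, $K_L$) give $\mathsf{d}(u,v)\ge K_L^{-1}\abs{L(u)-L(v)}$, which pushes forward to yield the key reduction $\sW_p(\upmu,\upnu)\ge K_L^{-1}\sW_p(\upmu\circ L^{-1},\upnu\circ L^{-1})$ for any Borel probability measures $\upmu,\upnu$ on $\X$.

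\textbf{Main steps.} (i) Integrating \cref{ET1.1A} and discarding the nonnegative $\phi$-term gives $\Exp_x[\Lyap(X_t)]\le\Lyap(x)+bt$, and combining with $\Lyap\ge cL^{\theta}$ yields $\Exp_x[L(X_t)^{\theta}]\le c^{-1}(\Lyap(x)+bt)$; Markov's inequality then produces $\Prob_x(L(X_t)>r)\le c^{-1}(\Lyap(x)+bt)\,r^{-\theta}$. (ii) Set $\alpha\df\vartheta+\varepsilon+\epsilon$. A short contradiction argument (if $\limsup_R R^{\alpha}\uppi(L>R)<\infty$ then $\uppi(L>r)\le C r^{-\alpha}$ for large $r$, and this would bound $\uppi(L^{\vartheta+\varepsilon})$ by a convergent integral of the form $\int r^{-1-\epsilon}\D r$, contradicting the hypothesis) extracts a sequence $R_n\to\infty$ with $c_n\df R_n^{\alpha}\uppi(L>R_n)\to\infty$. (iii) Define $t_n$ by the balance $c^{-1}(\Lyap(x)+bt_n)=c_n R_n^{\theta-\alpha}/2^{\theta+1}$, which guarantees $\Prob_x(L(X_{t_n})>R_n/2)\le c_n R_n^{-\alpha}/2$, at most $\tfrac12\uppi(L>R_n)$. (iv) Let $\mu\df\uppi\circ L^{-1}$ and $\nu_n\df\updelta_xP_{t_n}\circ L^{-1}$. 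The two tail bounds translate into the quantile inequalities $F_{\mu}^{-1}(1-\delta)\ge R_n$ for $\delta\in(0,c_nR_n^{-\alpha}]$ and $F_{\nu_n}^{-1}(1-\delta)\le R_n/2$ for $\delta\in[c_nR_n^{-\alpha}/2,1)$, so that by the standard formula $\sW_p(\mu,\nu_n)^p=\int_0^1\abs{F_\mu^{-1}(u)-F_{\nu_n}^{-1}(u)}^p\D u$ restricted to the interval $[c_nR_n^{-\alpha}/2,c_nR_n^{-\alpha}]$ one obtains $\sW_p(\mu,\nu_n)^p\ge(R_n/2)^p\cdot c_n R_n^{-\alpha}/2$. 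Combining with the Lipschitz reduction yields $\sW_p(\updelta_xP_{t_n},\uppi)\ge c_0\,c_n^{1/p}R_n^{-(\alpha-p)/p}$. (v) Solving the balance in (iii) for $R_n$ as a function of $t_n$ and substituting produces
\begin{equation*}
\sW_p(\updelta_xP_{t_n},\uppi)\,\ge\,c_1\,c_n^{(\theta-p)/(p(\theta-\alpha))}\bigl(\Lyap(x)+bt_n\bigr)^{-(\alpha-p)/(p(\theta-\alpha))}.
\end{equation*}
Since $(\theta-p)/(p(\theta-\alpha))>0$ and $c_n\to\infty$, the factor $c_n^{(\theta-p)/(p(\theta-\alpha))}$ is bounded below by $1$ for large $n$, and after passing to an increasing subsequence to guarantee that $t_n$ is strictly increasing and divergent, \cref{ET1.2A} follows upon unpacking $\alpha=\vartheta+\varepsilon+\epsilon$ and absorbing constants so as to replace $\Lyap(x)+bt_n$ by $t_n+\Lyap(x)$.

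\textbf{Main obstacle.} The crucial part is the interplay between the subsequence extraction in step (ii) and the time-scale choice in step (iii): the hypothesis only gives subsequence information about the tail of $\uppi$, and one must engineer the balance so that the quantile gap $R_n/2$ acts on a $\delta$-interval of length proportional to $c_nR_n^{-\alpha}$ and so that substituting $R_n=R_n(t_n)$ reproduces exactly the claimed exponent $-(\alpha-p)/((\theta-\alpha)p)$. It is the fact that the diverging factor $c_n$ enters the final estimate with a strictly positive exponent that makes the bound robust, allowing it to match the stated rate without any additional knowledge of the actual tail of $\uppi$ beyond the divergence of its $(\vartheta+\varepsilon)$-moment.
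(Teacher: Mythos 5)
Your proof is correct, and it takes a genuinely different route from the paper's. The paper argues directly on $\X$ with the one-parameter family of Lipschitz test functions $f_s(x)=\bigl(L(x)-s/2\bigr)^+$ and the Minkowski-type estimate $\bigl|\bigl(\int f^p\D\upmu_1\bigr)^{1/p}-\bigl(\int f^p\D\upmu_2\bigr)^{1/p}\bigr|\le\Lip(f)\,\sW_p(\upmu_1,\upmu_2)$ (attributed to \cite[Proposition~7.29]{Villani-Book-2009}); the sequence $\{s_n\}$ is produced by observing that if $(s/2)^p\,\uppi(L>s)$ were eventually below $2^p s^{p-\vartheta-\varepsilon-\epsilon}$ then $\uppi(L^{\vartheta+\varepsilon})$ would be finite, and $t_n$ is then balanced against $s_n$ via the Foster--Lyapunov bound $\Exp_{x}[\Lyap(X_t)]\le\Lyap(x)+bt$. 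You instead push both marginals forward through $L$ to obtain measures on $[0,\infty)$, invoke the quantile representation $\sW_p^p(\mu,\nu)=\int_0^1\abs{F_\mu^{-1}-F_\nu^{-1}}^p\D u$, use Markov's inequality on the same moment bound to get a tail estimate for $\updelta_xP_t\circ L^{-1}$, and a tail (rather than integral) version of the contradiction argument to extract $R_n$ with $R_n^{\alpha}\uppi(L>R_n)\to\infty$. These are two presentations of the same underlying mechanism --- moment control on one marginal, moment divergence on the other --- but the test-function route is slicker, while your quantile route is more elementary and more self-contained. One small correction: the key reduction $\sW_p(\upmu,\upnu)\ge K_L^{-1}\sW_p(\upmu\circ L^{-1},\upnu\circ L^{-1})$ in your Overview follows from the Lipschitz property of $L$ alone (via the pushforward of couplings), not from the length-space hypothesis as you claim; indeed, your argument never actually uses the length-space structure, which is a feature rather than a bug. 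A second, shared with the paper, minor loose end is that $t_n$ as defined by your balancing equation need not lie in $\T$ when $\T=\Z_+$, so one should take integer parts and absorb the resulting bounded discrepancy into the constant.
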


Note that the parameters $\theta$, $\vartheta$, $\varepsilon$, $p$ and $\iota$ are such that the exponent in the above expression is always strictly negative.
Obtaining lower bound for the convergence in the total variation norm is
discussed in \cite[Theorem~5.1 and Corollary~5.2]{Hairer-Lecture-notes-2016}.
Applications of \cref{T1.2} are discussed in \cref{S3}.

%%%%%%%%%%%%%%%%%%%%%%%%%%%%%%%%%%%%%%%%%%%%%%%%%%%%%%%%%%%%%%%%%%%%%%

\subsection{Ergodicity of a class of L\'evy-type processes}

Here, we discuss ergodic properties of a class of Markov processes on
the Euclidean space $\RR^n$ (endowed with the standard Euclidean metric)
generated by a (L\'evy-type) operator
$\mathcal{L}\colon\mathcal{D}(\mathcal{L})\subseteq\cB(\RR^n)\to\cB(\RR^n)$ 
given by
\begin{equation}\label{OP-L}
\mathcal{L} f(x) \,=\,
\bigl\langle b(x),\nabla f(x)\bigr\rangle
+\frac{1}{2}\trace\bigl(a(x)\nabla^2f(x)\bigr)
+\int_{\RR^n}\mathfrak{d}_1 f(x;y)\upnu(x,\D{y})\,,\qquad x\in\RR^n\,.
\end{equation}
Here, $b=(b_i)_{i=1,\dots,n}\colon\RR^n\to\RR^n$ is Borel measurable,
$a=(a_{ij})_{1\le i,j\le n}\colon\RR^n\to\RR^{n\times n}$ is a symmetric
non-negative definite $n \times n$ matrix-valued Borel measurable function,
$\upnu(x, \D y)$ is a nonnegative Borel kernel on
$\RR^n\times\mathfrak{B}(\RR^n)$, called the L\'evy kernel,
satisfying
\begin{equation*}\upnu(x,\{0\})\,=\,0\,,\quad\text{and}\quad \int_{\RR^n}\bigl(1\wedge|y|^2\bigr)
\,\upnu(x,\D y)\,<\,\infty\qquad \forall\,x\in\RR^n\,,\end{equation*}
and
\begin{equation*}
\mathfrak{d}_1 f(x;y)\,\df\,f(x+y)-f(x)-\Ind_{\sB}(y)
\langle y,\nabla f(x)\rangle\,,\qquad x,y\in\RR^n\,,\quad f\in C^1(\RR^n)\,.
\end{equation*}
The symbol $\mathcal{D}(\mathcal{L})$ stands for the domain of $\mathcal{L}$, i.e.
the set of functions $f\in\cB(\RR^n)$ for which \cref{OP-L} is well defined,
$\langle\cdot,\cdot \rangle$ and $\abs{\cdot}$ denote the standard inner product
and the corresponding Euclidean norm on $\RR^n$, $\trace M$
stands for the trace
of a square matrix $M$, and $\nabla^2f(x)$ denotes the Hessian of $f\in C^2(\RR^n)$.
An open (resp.\ closed) ball of radius $r>0$ centered at $x$ is denoted by $\sB_r(x)$
(resp.\ $\overline\sB_r(x)$).
If $x=0$, we write $\sB_r$ (resp.\ $\overline\sB_r$), and the unit open (resp.\ closed)
ball centered at $0$ is denoted by $\sB$ (resp.\ $\overline\sB$). 
Observe that $C_b^2(\RR^n)\subseteq\mathcal{D}(\mathcal{L})$, where $C_b^k(\RR^n)$,
$k\ge0$, denotes the space of $k$ times differentiable functions such that
all derivatives up to order $k$ are bounded. We also denote by
$\norm{M}\df \bigl(\trace MM'\bigr)^{\nicefrac{1}{2}}$
the Hilbert--Schmidt norm of a matrix $M$,
where $M'$ stands for the transpose of $M$.

We introduce the following assumption:

\begin{itemize}
	\item[\hypertarget{MP}{\textbf{(MP)}}] 
	There exists a conservative strong Markov process $\process{X}$ with
	c\`{a}dl\`{a}g sample paths such that 
	\begin{equation}\label{MART}
	M_f(t)\,\df\,f\bigl(X(t)\bigr)-f\bigl(X(0)\bigr)
	-\int_0^t\mathcal{L}f\bigl(X(s)\bigr)\,\D s\,,
	\qquad t\ge0\,,
	\end{equation}
	is a $\Prob_{x}$-martingale (with respect to $\{\mathcal{F}_t\}_{t\ge0}$)
	for any $f\in C_c^\infty(\RR^n)$
	(the space of smooth functions with compact support). 
\end{itemize}

Define
\begin{equation*}
	q(x,\xi)\,\df\,-i\langle\xi,b(x)\rangle+\frac{1}{2}\langle\xi,a(x)\xi\rangle
	+\int_{\RR^n}\bigl(1-\E^{i\langle\xi,y\rangle}+i\langle\xi,y\rangle
	\Ind_{\sB}(y)\bigr)\upnu(x,\D y)\,,\qquad x,\xi\in\RR^n\,,
\end{equation*}
and observe that
\begin{equation*}\mathcal{L}f(x)\,=\,-\int_{\RR^n}\E^{i\langle\xi,x\rangle}q(x,\xi)\Hat f(\xi)\,
\D \xi\end{equation*} for all $x\in\RR^n$ and $f\in C_c^\infty(\RR^n),$
where $\Hat f(\xi)\df(2\pi)^{-n}\int_{\RR^n}\E^{-i\langle\xi,x\rangle}f(x)\,\D x$
denotes the Fourier transform of $f(x)$.
In other words, $\mathcal{L}$ is a pseudo-differential operator with symbol
$q(x,\xi)$. According to \cite[Theorem~1.1]{Franziska1},
\hyperlink{MP}{\ttup{MP}} is satisfied if 
\begin{itemize}
	\item[\hypertarget{LB}{\textbf{(LB)}}]
	The functions $b(x)$, $a(x)$, and
$x\mapsto\int_{\RR^n}\bigl(1\wedge|y|^2\bigr)\,\upnu(x,\D y)$
	are locally bounded.
\smallskip
	\item[\hypertarget{SG}{\textbf{(SG)}}]
	$x\mapsto q(x,\xi)$ is continuous for all $\xi\in\RR^n$,
	and $q(x,\xi)$ is locally uniformly continuous at $\xi=0$, i.e.
	\begin{equation*}
		\lim_{\rho\to\infty}\,\sup_{x\in\sB_\rho}\,\sup_{\xi\in\sB_{\nicefrac{1}{\rho}}}\,
		\babs{q(x,\xi)}\,=\,0\,.
	\end{equation*}
\end{itemize}

Observe that the second condition in \hyperlink{SG}{\ttup{SG}}
essentially means that the coefficients $b(x)$, $a(x)$, and $\upnu(x,\D y)$
have a sublinear growth. Namely, it is satisfied if
\begin{align*}
	\lim_{\rho\to\infty}\,\Biggl(\frac{\sup_{x \in \sB_\rho}\abs{b(x)}}{\rho}
	+\frac{\sup_{x \in \sB_\rho}\abs{a(x)}}{\rho^2}
	&+\frac{\sup_{x \in \sB_\rho}\int_{\sB}\abs{y}^2\,\upnu(x,\D y)}{\rho^2}\\&
	+\sup_{x\in\sB_\rho}\,\sup_{\xi\in\sB_{\nicefrac{1}{\rho}}}\, \int_{\sB^c}
	\bigl(1-\E^{i\langle\xi,y\rangle}\bigr)\upnu(x,\D y)\Biggr)\,=\,0\,.
\end{align*}
In order to allow linear growth of the coefficients, we replace
\hyperlink{LB}{\ttup{LB}} and \hyperlink{SG}{\ttup{SG}} by
\begin{itemize}
	\item[\hypertarget{LG}{\textbf{(LG)}}]
	$\mathcal{L}\bigl(C_c^\infty(\RR^n)\bigr)\subseteq C_\infty(\RR^n)$, $x\mapsto q(x,\xi)$
	is continuous for all $\xi\in\RR^n$, and 
	\begin{equation*}
		\limsup_{\abs{x}\to\infty}\,\sup_{\xi\in\sB_{\nicefrac{1}{\abs{x}}}}\,\babs{q(x,\xi)}
		\,<\,\infty
	\end{equation*}
(see \cite[Corollary~3.2]{Franziska3}).
\end{itemize}

Here, $C_\infty(\RR^n)$ stands for the space of continuous functions vanishing
at infinity. Clearly, the last condition in \hyperlink{LG}{\ttup{LG}} follows from 
\begin{equation}\label{LGC}
\limsup_{|x|\to\infty}\,\left(\frac{|b(x)|}{|x|}+\frac{\lVert a(x)\rVert}{|x|^2}
+\frac{\int_{\sB}|y|^2\upnu(x,\D y)}{|x|^2}+\upnu(x,\sB^c)\right)\,<\,\infty\,.
\end{equation}
Let us also remark that due to \cite[Theorem~A1]{Franziska2} the map
$x\mapsto q(x,\xi)$ is continuous for all $\xi\in\RR^n$
if $b(x)$ and $a(x)$ are continuous, and for any $r>0$, $x\in\RR^n$ and
$f\in C_c(\RR^n\setminus\{0\})$,
\begin{align*}&
	\lim_{\rho\to\infty}\,\sup_{y\in\sB_r}\, \upnu(y,\sB_\rho^c)\,=\,0\,,
	\qquad \lim_{\rho\to0}\,\sup_{y\in \sB_r}\, \int_{\sB_\rho}|z|^2\upnu(y,\D z)\,=\,0\,,
	\intertext{and}
	&\lim_{y\to x}\, \int_{\RR^n}f(z)\,\upnu(y,\D z)\,=\,\int_{\RR^n}f(z)\,\upnu(x,\D z)\,.
\end{align*}
Furthermore, under the continuity of $x\mapsto q(x,\xi)$ (for all $\xi\in\RR^n$)
in the same reference it has been shown that
$\mathcal{L}\bigl(C_c^\infty(\RR^n)\bigr)\subseteq C_b(\RR^n)$.
In addition, if
\begin{equation*}\lim_{\abs{x}\to\infty}\, \upnu\bigl(x,\sB_r(-x)\bigr)\,=\,0\qquad \forall\,r>0\,,\end{equation*}
we easily see that $\mathcal{L}\bigl(C_c^\infty(\RR^n)\bigr)\subseteq C_\infty(\RR^n)$.

%%%%%%%%%%%%%%%%%%%%%%%%%%%%%%%%%%%%%%%%%%%%%%%%%%%%%%%%%%%%%%%%%%%%%%%%%%%%%%%%
\begin{definition}\label{D1.1}
	Let $\mathcal{M}_+$ denote the class of positive definite matrices in
	$\RR^{n\times n}$.
	For $Q\in\mathcal{M}_+$, let $\abs{x}_Q\df\langle x,Qx\rangle^{\nicefrac{1}{2}}$ for
	$x\in\RR^n$,
	and $\chi_Q\in C^\infty(\RR^n)$ be some nonnegative, symmetric
	convex function such that
	$\chi_Q(x)=\abs{x}_Q$ for $x\in\cB^c$.
	For $Q\in\mathcal{M}_+$ and $\zeta>0$, we define
	\begin{equation*}
		\Lyap_{Q,\zeta}(x) \,\df\, \bigl(\chi_Q(x)\bigr)^\zeta\,,
		\quad\text{and}\quad
		\widetilde\Lyap_{Q,\zeta}(x)\,\df\,\E^{\zeta\chi_Q(x)}\,,\qquad x\in\RR^n\,.
	\end{equation*}
	Further, let
	\begin{equation*}%\label{E-Theta}
		\Theta_\upnu\,\df\,\left\{\theta\ge0\,\colon\sup_{x\in\RR^n}\int_{\RR^n}
		\bigl(\abs{y}^{2}\,\Ind_{\sB}(y)+\abs{y}^{\theta}\,\Ind_{\sB^c}(y)\bigr)\,\upnu(x,\D{y})
		<\infty\right\}\,,
	\end{equation*}
	and when $\Theta_\upnu\neq\emptyset$, let $\theta_\upnu\df\sup \Theta_\upnu$.
\end{definition}

We now discuss ergodic properties of the L\'evy-type process $\process{X}$.

%%%%%%%%%%%%%%%%%%%%%%%%%%%%%%%%%%%%%%%%%%%%%%%%%%%%%%%%%%%%%%%%%%%%%%%%%%%%%%%%
\begin{thm}\label{T1.3}
	Assume
	\hyperlink{LB}{\ttup{LB}} and \hyperlink{MP}{\ttup{MP}}, and suppose that
	$\process{X}$ is irreducible and aperiodic, and that every compact set is petite
	for $\process{X}$. 
	Then the following hold.
	\begin{enumerate}
		\item[\ttup{i}]
		If $\theta_\upnu>0$,
		\begin{equation}\label{ET1.3A}
		\lim_{r\to\infty}\,\sup_{x\in\RR^n}\,
		\int_{\sB^c_r}\abs{y}^\theta\,\upnu(x,\D y)\,=\,0
		\end{equation}
		for some $\theta\in(0,\theta_\upnu]\cap\Theta_\upnu$, and there exist
		$Q\in\mathcal{M}_+$ and $\vartheta\in[0\vee(2-\theta),2)$ such that
\begin{equation*}
\limsup_{\abs{x}\to\infty}\frac{\lVert a(x)\rVert}{|x|^\vartheta}\,=\,0\,,
\quad\text{and}\quad
			\limsup_{\abs{x}\to\infty}\,\frac{\bigl\langle b(x)
				+\Ind_{[1,\infty)}(\theta)\int_{\sB^c}\\
				y\,\upnu(x,\D{y}),
				Qx\bigr\rangle}{\abs{x}^\vartheta} \,<\, 0\,,
		\end{equation*}
		then $\process{X}$ admits a unique invariant
		$\uppi\in\mathcal{P}_{\theta-2+\vartheta}(\RR^n)$.
		In addition, if $\theta-3+\vartheta\ge0$, then \cref{T1.1}\,\ttup{i} and \ttup{ii}
		hold with $\Lyap(x)=\Lyap_{Q,\theta}(x)+1$,
		$\phi(t)=t^{\nicefrac{(\theta-2+\vartheta)}{\theta}}$ and $\eta=\theta-2+\vartheta$.
		
\item[\ttup{ii}]
		If $\theta_\upnu>0$, \cref{ET1.3A} holds for some
		$\theta\in(0,\theta_\upnu]\cap\Theta_\upnu$, and there exists $Q\in\mathcal{M}_+$
		such that
		\begin{equation*}
			\limsup_{\abs{x}\to\infty}\,\frac{\lVert a(x)\rVert}{|x|^2}\,=\,0\,,
\quad\text{and}\quad
\limsup_{\abs{x}\to\infty}\,\frac{\bigl\langle b(x)+\Ind_{[1,\infty)}(\theta)\,
\int_{\sB^c}y\,\upnu(x,\D{y}),
				Qx\bigr\rangle}{\abs{x}^2} \,<\, 0\,,
		\end{equation*}
then $\process{X}$ admits a unique invariant $\uppi\in\mathcal{P}_{\theta}(\RR^n)$.
		In addition, if $\theta\ge1$,
		then the conclusion of \cref{T1.1}\,\ttup{iii} holds with 
		$\Lyap(x)=\Lyap_{Q,\theta}(x)+1$ and $\eta=\theta$.
		
		\item[\ttup{iii}]
		Suppose that $a(x)$ is bounded, and there exist $\theta>0$
		and $Q\in\cM_+$, such that
		\begin{equation}\label{ET1.3B}
		\sup_{x\in\RR^n}\,\int_{\RR^n}
		\bigl(\abs{y}^2\Ind_{\sB}(y)+\E^{\theta\abs{y}}\Ind_{\sB^c}(y)\bigr)\,
		\upnu(x,\D y)\,<\,\infty\,,
		\end{equation}
		and
		\begin{equation*}%\label{ET1.3C}
			\limsup_{\abs{x}\to\infty}\,\frac{\bigl\langle b(x)+\int_{\sB^c}y\,\upnu(x,\D{y}),
				Qx\bigr\rangle}{\abs{x}} \,<\, 0\,.
		\end{equation*}
		Then the conclusion of \cref{T1.1}\,\ttup{iii} holds with 
		$\Lyap(x)=\widetilde\Lyap_{Q,\zeta}(x)$ for any $\zeta>0$
		sufficiently small
		and any $\eta\ge1$.
	\end{enumerate}
\end{thm}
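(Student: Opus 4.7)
The plan for all three parts is to invoke \cref{T1.1} with an appropriate Lyapunov function from \cref{D1.1}, so the core task is to establish a pointwise infinitesimal inequality
\begin{equation*}
\mathcal{L}\Lyap(x) \,\le\, -c\,\phi\circ\Lyap(x) + b\,\Ind_{\overline\sB_R}(x)
\end{equation*}
for $R$ sufficiently large. Since every compact set is petite by hypothesis, $\overline\sB_R$ serves as the petite set $C$; the supremum $\sup_C\Lyap$ is finite by continuity of $\chi_Q$; and the explicit polynomial or exponential form of $\phi\circ\Lyap$ yields \cref{ET1.1B} directly. The passage to the integrated form \cref{ET1.1A} is a routine Dynkin argument: apply the martingale property \cref{MART} to $C_c^\infty$-cutoffs of $\Lyap$ localized at $\uptau_n = \inf\{t\ge 0:\abs{X(t)}\ge n\}$, take $\Exp_x[\cdot]$, and pass $n\to\infty$ via Fatou's lemma on the left and monotone convergence on the right, using \hyperlink{LB}{\ttup{LB}} for local boundedness of the integrand.

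For parts \ttup{i} and \ttup{ii} I would take $\Lyap = \Lyap_{Q,\theta}+1$. On $\sB^c$, where $\chi_Q(x)=\abs{x}_Q$, the gradient is $\nabla\Lyap_{Q,\theta}(x) = \theta\abs{x}_Q^{\theta-2}Qx$ and $\bnorm{\nabla^2\Lyap_{Q,\theta}(x)} = O(\abs{x}^{\theta-2})$, so the drift-plus-diffusion contribution to $\mathcal{L}\Lyap_{Q,\theta}(x)$ is $\theta\abs{x}_Q^{\theta-2}\langle b(x),Qx\rangle + O(\norm{a(x)}\abs{x}^{\theta-2})$. For the jump integral I would split $\upnu(x,\cdot)$ into three regions: $\{\abs{y}\le 1\}$ (second-order Taylor yields $O(\abs{y}^2\abs{x}^{\theta-2})$ pointwise), $\{1<\abs{y}\le\abs{x}/2\}$ (Taylor still applies and contributes a linear-in-$y$ piece plus $O(\abs{y}^2\abs{x}^{\theta-2})$), and $\{\abs{y}>\abs{x}/2\}$ (brute bound $\Lyap_{Q,\theta}(x+y)+\Lyap_{Q,\theta}(x)\le C\abs{y}^\theta$). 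The linear-in-$y$ contributions collect to $\theta\abs{x}_Q^{\theta-2}\bigl\langle Qx,\int_{\sB^c}y\,\upnu(x,\D y)\bigr\rangle$ precisely when $\theta\ge 1$, which explains the centering correction in the hypothesis; the remaining contributions are $O(\abs{x}^{\theta-2})+O(1)$ thanks to $\theta\in\Theta_\upnu$ combined with \cref{ET1.3A}. Combining, the assumed negativity of the inner product forces $\mathcal{L}\Lyap_{Q,\theta}(x)\le -c\abs{x}^{\theta-2+\vartheta}$ for $\abs{x}$ large, because $\norm{a(x)}\abs{x}^{\theta-2}$ is $o(\abs{x}^{\theta-2+\vartheta})$. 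Setting $\phi(t) = t^{(\theta-2+\vartheta)/\theta}$ and $\eta=\theta-2+\vartheta$, \cref{T1.1}\,\ttup{i}--\ttup{ii} applies in regime \ttup{i} (since $\vartheta<2$ gives $\phi'(t)\to 0$), and \cref{T1.1}\,\ttup{iii} applies in regime \ttup{ii} (since $\vartheta=2$ makes $\phi$ linear). The auxiliary restrictions $\theta-3+\vartheta\ge 0$ in \ttup{i} and $\theta\ge 1$ in \ttup{ii} are exactly the requirement $\eta\ge 1$ of \cref{T1.1}.

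For part \ttup{iii} I would use $\Lyap = \widetilde\Lyap_{Q,\zeta}$ with $\zeta>0$ to be chosen. The elementary inequality
\begin{equation*}
\E^{\zeta\abs{x+y}_Q}-\E^{\zeta\abs{x}_Q}\,\le\,\E^{\zeta\abs{x}_Q}\bigl(\E^{\zeta\sqrt{\lambda_{\max}(Q)}\abs{y}}-1\bigr)\,,
\end{equation*}
combined with \cref{ET1.3B} once $\zeta$ is small enough that $\zeta\sqrt{\lambda_{\max}(Q)}<\theta$, bounds the jump integral by $O(\zeta\,\widetilde\Lyap_{Q,\zeta}(x))$ (the $O(\zeta)$ coefficient coming from Taylor expansion of $\E^{\zeta u}-1$ on the small-$\abs{y}$ part together with $\int_\sB\abs{y}^2\upnu(x,\D y)$ being bounded). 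The drift and centering terms dominate at order $-c'\zeta\,\widetilde\Lyap_{Q,\zeta}(x)$ by the new assumption, while the bounded diffusion contributes $O(\zeta^2\widetilde\Lyap_{Q,\zeta}(x))$; choosing $\zeta$ small enough then gives $\mathcal{L}\widetilde\Lyap_{Q,\zeta}(x)\le -c''\widetilde\Lyap_{Q,\zeta}(x)$ for $\abs{x}$ large, so \cref{T1.1}\,\ttup{iii} applies with $\phi(t)=c''t$ and any $\eta\ge 1$. The single genuinely nontrivial analytic step throughout is the large-jump region $\abs{y}\gtrsim\abs{x}$, where Taylor estimates are unavailable and one must rely on the uniform tail condition \cref{ET1.3A} (or its exponential analogue \cref{ET1.3B}) to absorb the crude $C\abs{y}^\theta$ bound into an $O(1)$ residual uniformly in $x$; this is the main obstacle, and it is precisely where the Lévy-kernel hypotheses are spent.
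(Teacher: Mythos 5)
Your overall strategy matches the paper's: take $\Lyap=\Lyap_{Q,\theta}+1$ (resp.\ $\widetilde\Lyap_{Q,\zeta}$), establish the pointwise generator inequality $\mathcal{L}\Lyap\le b\Ind_{\overline\sB_R}-c\,\phi\circ\Lyap$, and pass to the integrated drift condition \cref{ET1.1A} via a localization/Dynkin argument so that \cref{T1.1} applies. The three-region split you use for parts \ttup{i} and \ttup{ii} is essentially the content of \cref{L2.2}\,\ttup{i}--\ttup{ii} and is sound: the Taylor estimate on the mid-range $1<\abs{y}\le\abs{x}/2$ is what keeps the jump residual at order $\abs{x}^{\theta-2}$, and the uniform tail condition \cref{ET1.3A} is what makes the large-$y$ leftover absorbable. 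Your bookkeeping of which $\vartheta$ gives which $\phi$, and of how $\theta-3+\vartheta\ge0$ and $\theta\ge1$ encode $\eta\ge1$, is correct.

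There is, however, a genuine gap in your treatment of part \ttup{iii}. You bound the jump term of $\mathcal{L}\widetilde\Lyap_{Q,\zeta}$ by $O(\zeta)\,\widetilde\Lyap_{Q,\zeta}$ using the crude triangle-inequality estimate on all of $\sB^c$. But the drift contribution $\langle b(x)+\int_{\sB^c}y\,\upnu,\nabla\widetilde\Lyap_{Q,\zeta}\rangle$ is also exactly of order $\zeta\,\widetilde\Lyap_{Q,\zeta}$ (its coefficient $-c'$ is fixed by the coefficient bounds, not by $\zeta$). So both the ``good'' and ``bad'' contributions scale linearly in $\zeta$, and sending $\zeta\to0$ does not decide the sign of their sum: you need the fixed constant $C$ in $C\zeta\,\widetilde\Lyap_{Q,\zeta}$ to be smaller than the fixed constant $c'$, which is not implied by any hypothesis. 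The paper's \cref{L2.2}\,\ttup{iii} is precisely the fix: it shows that $\mathfrak{J}_\upnu[\widetilde\Lyap_{Q,\zeta}](x)\le c\,\zeta^{\nicefrac{3}{2}}\,\widetilde\Lyap_{Q,\zeta}(x)$ on $\sB_r^c$ with $r=r(\zeta)$ allowed to grow as $\zeta\downarrow0$. The extra $\zeta^{\nicefrac{1}{2}}$ comes from two improvements you dropped: (a) keep the second-order Taylor estimate up to $\abs{y}\lesssim\abs{x}$, which yields $O(\zeta^2+\zeta/\abs{x})\,\widetilde\Lyap_{Q,\zeta}$, of order $\zeta^2$ on $\sB^c_{1/\zeta}$; and (b) use the crude exponential bound only on the region $\{\abs{y}\gtrsim\abs{x}\}$, where the exponential tail in \cref{ET1.3B} makes the integral decay faster than any power of $\abs{x}^{-1}$, hence $\le\zeta^{\nicefrac{3}{2}}$ for $\abs{x}>R(\zeta)$. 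With the $\zeta^{\nicefrac{3}{2}}$ bound the comparison $-c'\zeta+O(\zeta^{\nicefrac{3}{2}})<0$ for small $\zeta$ closes, at the price of a $\zeta$-dependent petite set $\overline\sB_{R(\zeta)}$, which is harmless.
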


Irreducibility and aperiodicity are crucial structural properties of the
underlying process in \cref{T1.1,T1.2,T1.3}. Roughly speaking,
they ensure that the process does not show singular behavior in its motion,
and together with the Foster-Lyapunov condition in \cref{ET1.1A}
(which ensures controllability of the $\phi\circ\Phi^{-1}$-modulated moment of return-times
to the petite set $C$, see \cite[Theorem~4.1]{Douc-Fort-Guilin-2009}) 
they lead to the ergodic properties stated.

Under an asymptotic flatness (uniform dissipativity) property
(see \cref{ET1.4A}),
we use a completely different approach to this problem, the
so-called synchronous coupling method (see \cite[Example 2.16]{Chen-Book-2005}
for details), to obtain ergodic properties for a class of It\^{o}
processes which are not necessarily irreducible and aperiodic.
Recall that an It\^{o} process is a 
solution to a stochastic differential equation (SDE) of the following form 
\begin{equation}\label{SDE}
\begin{aligned}
X(t)\,=\,x&+\int_0^tb\bigl(X(s)\bigr)\,\D s+\int_0^t\sigma\bigl(X(s)\bigr)\,\D B_s\\
&+\int_0^t\int_{\{w\colon |k(X(s-),w)|<1\}} k(X(s-),v)\,
\bigl(\nu_p(\D v,\D s)-\nu(\D v)\, \D s\bigr)\\
&+\int_0^t\int_{\{w\colon|k(X(s-),w)|\ge1\}} k(X(s-),v)\, \nu_p(\D v,\D s)\,,
\qquad (t,x)\in[0,\infty)\times\RR^n\,, 
\end{aligned}
\end{equation}
where $b\colon\RR^n\to\RR^n$, $\sigma\colon\RR^n\to\RR^{n\times n}$ and
$k\colon\RR^n\times\RR\to\RR^n$ are Borel measurable,
$\process{B}$ is a standard $n$-dimensional Brownian motion,
and $\nu_p(\D v,\D s)$
is a Poisson random measure on
$\mathfrak{B}(\RR)\otimes\mathfrak{B}\bigl([0,\infty)\bigr)$,
with intensity measure $\nu(\D v)\,\D s$ (a $\sigma$-finite
measure on $\mathfrak{B}(\RR)\otimes\mathfrak{B}(\RR)$).
According to \cite[Theorem~3.33]{Cinlar-Jacod-1981}, every It\^{o} process is a 
semimartingale Hunt process.
In particular, it is a conservative strong Markov process with c\`{a}dl\`{a}g
sample paths. Conversely, again by \cite[Theorem~3.33]{Cinlar-Jacod-1981},
for every $n$-dimensional semimartingale Hunt process $\process{X}$,
and every $\sigma$-finite nonfinite and nonatomic measure $\nu(\D v)$ on
$\mathfrak{B}(\RR)$, there exist $b(x)$, $\sigma(x)$, $k(x,v)$, $\process{B}$,
and $\nu_p(\D v, \D s)$ as above (possibly defined on an enlargement of the
initial stochastic basis), such that $\process{X}$ satisfies \cref{SDE}.
By setting
\begin{equation*}\upnu_p(\D y,\D s)\,=\,\nu_p\bigl(\{(v,u)\in\RR\times[0,\infty)
\colon(k(X(u-),v),u)\in(\D y,\D s)\}\bigr)\,,\end{equation*}
and 
\begin{equation*}\upnu(x,\D y)\,=\,\nu\bigl(\{u\in\RR\colon k(x,u)\in\D y\}\bigr)\,,\end{equation*}
\cref{SDE} reads as
\begin{equation*}%\label{SDE1}
	\begin{aligned}
X(t)\,=\,x+\int_0^tb\bigl(X(s)\bigr)\,\D s+\int_0^t\sigma\bigl(X(s)\bigr)\,\D B_s
		&+\int_0^t\int_{\sB} y\,\bigl(\upnu_p(\D y,\D s)-\upnu(X(s-),\D y)\,\D s\bigr)\\
		&+\int_0^t\int_{\sB^c} y\, \upnu_p(\D y,\D s)\,,
		\qquad (t,x)\in[0,\infty)\times\RR^n\,.
	\end{aligned}
\end{equation*}
Set $a(x)\df\sigma(x)\sigma(x)'$, and let $\mathcal{L}$ be as in \cref{OP-L}.
According to \cite[Theorem~II.2.42]{Jacod-Shiryaev-2003}
(with $h(x)=x\Ind_{\sB}(x)$), for any $f\in C_b^2(\RR^n)$, the process $\process{M_f}$,
defined as in \cref{MART}, is a $\Prob_x$-local martingale for every $x\in\RR^n$. 
In addition, if \hyperlink{LB}{\ttup{LB}} holds true, then $\process{M_f}$ is a
$\Prob_x$-local martingale for every $f\in C_c^\infty(\RR^n)$ and every $x\in\RR^n$,
i.e. \hyperlink{MP}{\ttup{MP}} is satisfied.

For $x,z\in\RR^{n}$ define
\begin{equation*}
	\varDelta_{z}b(x)\,\df\, b(x+z)-b(x)\,,\quad
	\varDelta_{z}\sigma(x)\,\df\,\sigma(x+z)-\sigma(x)\,,\quad
\varDelta_{z}\upnu(x,\D y)\,\df\, \upnu(x+z,\D y)-\upnu(x,\D y)\,,
\end{equation*}
\begin{equation*}\varDelta_{z}\Tilde b(x)\,\df\, \varDelta_{z}b(x)
	+\int_{\sB^c}y\, \varDelta_{z}\upnu(x,\D y)\,,\quad 
	\text{and} \quad\Tilde{a}(x;z)\df \varDelta_{z}\sigma(x) 
	\varDelta_{z}\sigma(x)'\,.\end{equation*}
If $b(x)\equiv b$ (resp.\ $\sigma(x)\equiv\sigma$, or
$\upnu(x,\D y)\equiv\upnu(\D y)$), then of course $\varDelta_{z}b(x)$
(resp.\ $\varDelta_{z}\sigma(x)$, or $\varDelta_{z}\upnu(x,\D y)$) is equal to zero.

%%%%%%%%%%%%%%%%%%%%%%%%%%%%%%%%%%%%%%%%%%%%%%%%%%%%%%%%%%%%%%%%%%%%%%%%%%%%%%%%
\begin{thm}\label{T1.4}
	Assume that $b(x)$ and $a(x)$ are locally bounded and satisfy the linear growth
	condition in \cref{LGC}, and that $\upnu(x,\D y)$ is such
	that $2\in\Theta_\upnu$. 
	If for some $p\in[2,\theta_\upnu]\cap\Theta_\upnu$ there exist $Q\in\cM_+$,
	and a $\sigma$-finite nonfinite and nonatomic measure $\nu(\D v)$ on
	$\mathfrak{B}(\RR)$ such that \cref{SDE} admits a unique strong solution $\process{X}$,
	and
	\begin{equation}
	\begin{aligned}\label{ET1.4A} 
	2\,\bigl\langle \varDelta_{z}\Tilde b(x),Qz\bigr\rangle& +\trace\,
	\bigl(\Tilde{a}(x;z) Q\bigr)+(p-2)\,\bnorm{\sqrt{Q}\,\varDelta_{z}\sigma(x)}^{2}\\
	& +2^{p-3}\bigl(1+(p-2)\norm{Q^{-1}}\bigr)
	\int_{\RR}\babs{k(x+z,v)-k(x,v)}_Q^2\, \nu(\D v)\\
	& +\frac{2^{p-2}}{p(p-1)}\bigl(1+(p-2)\norm{Q^{-1}}\bigr)\abs{z}_Q^{2-p}
	\int_{\RR}\babs{k(x+z,v)-k(x,v)}_Q^{p}\, \nu(\D v)
	 \,\le\, -\frac{2\,c(p)}{p}\abs{z}_Q^2
	\end{aligned}
	\end{equation}
	for some $c(p)>0$ and all $x,z\in\RR^n$, where $k\colon\RR^n\times\RR\to\RR^n$
	is given in \cref{SDE},
	then
	\begin{equation}\label{ET1.4B}
	\sW_p(\updelta_xP_t,\updelta_yP_t)\,\le\,
	\left(\frac{\overline\lambda_Q}{\underline\lambda_Q}\right)^{\nicefrac{1}{2}}|x-y|\,
	\E^{-\frac{c(p)t}{p}}
	\end{equation} for all $t\ge0$ and $x,y\in\RR^n$,
	where $\overline\lambda_Q$ ($\underline\lambda_Q$) stands for the largest (smallest)
	eigenvalue of $Q$.
	Furthermore, $\process{X}$ admits a unique invariant $\uppi\in\cP_{p}(\RR^n)$, and 
	\begin{equation}\label{ET1.4C}
	\sW_p(\upmu P_t,\uppi)\,\le\,
	\left(\frac{\overline\lambda_Q}{\underline\lambda_Q}\right)^{\nicefrac{1}{2}}
	\sW_p(\upmu,\uppi)\,\E^{-\frac{c(p)t}{p}}
	\end{equation}
	for all $t\ge0$ and $\upmu\in\cP_p(\RR^n)$.
	
	In addition, if $\sigma(x)\equiv\sigma$, a constant,
	$\upnu(x,\D y)\equiv\upnu(\D y)$,
	$1\in\Theta_\upnu$, and \cref{ET1.4A} holds for some
	$p\in[1,\theta_\upnu]\cap\Theta_\upnu$, then \cref{ET1.4B,ET1.4C} remain valid. 
\end{thm}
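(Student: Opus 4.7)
The approach is the synchronous coupling method. Given $x,y\in\RR^n$, let $\process{X}$ and $\process{Y}$ be the strong solutions of \cref{SDE} starting from $x$ and $y$ respectively, both driven by the same Brownian motion $\process{B}$ and the same Poisson random measure $\nu_p(\D v,\D s)$. Strong uniqueness makes $(X(t),Y(t))$ a coupling of $\updelta_xP_t$ and $\updelta_yP_t$, so with $Z(t)\df X(t)-Y(t)$,
\begin{equation*}
\sW_p(\updelta_xP_t,\updelta_yP_t)^p\,\le\,\Exp\bigl[\abs{X(t)-Y(t)}^p\bigr]\,\le\,\underline\lambda_Q^{-\nicefrac{p}{2}}\,\Exp\bigl[\abs{Z(t)}_Q^p\bigr]\,.
\end{equation*}
Thus \cref{ET1.4B} reduces to showing $\Exp\bigl[\abs{Z(t)}_Q^p\bigr]\le\abs{x-y}_Q^p\,\E^{-c(p)t}$.

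The central step is to apply Itô's formula to $F(z)\df\abs{z}_Q^p$ along $Z$; for $p\ge 2$, $F$ is globally $C^2$. Under the coupling, the continuous part of $\D Z(t)$ equals $\varDelta_{Z(t)}b(Y(t))\,\D t+\varDelta_{Z(t)}\sigma(Y(t))\,\D B_t$, while jumps are driven by the increments $k(X(t-),v)-k(Y(t-),v)$ against $\nu_p$. After standard localization to discard local-martingale contributions and taking expectations, the absolutely continuous part of $t\mapsto\Exp[\abs{Z(t)}_Q^p]$ splits into: (a) the drift term $p\,\abs{Z}_Q^{p-2}\langle\varDelta_{Z}\Tilde b(Y),QZ\rangle$ (the compensator of the big jumps produces the $\int_{\sB^c}y\,\upnu(Y,\D y)$ part of $\varDelta_Z\Tilde b$); (b) a continuous second-order contribution from $\tfrac12\trace\bigl(\nabla^2 F(Z)\,\Tilde a(Y;Z)\bigr)$, bounded by $\tfrac{p}{2}\abs{Z}_Q^{p-2}\bigl(\trace(\Tilde a(Y;Z)Q)+(p-2)\bnorm{\sqrt{Q}\varDelta_{Z}\sigma(Y)}^{2}\bigr)$ after applying Cauchy--Schwarz to the rank-one part $p(p-2)\abs{Z}_Q^{p-4}QZ(QZ)\transp$ of $\nabla^2 F$; and (c) a jump contribution controlled by the Taylor-type estimate
\begin{equation*}
\babs{\abs{z+h}_Q^p-\abs{z}_Q^p-p\abs{z}_Q^{p-2}\langle z,Qh\rangle}\,\le\, C_1(p,Q)\abs{z}_Q^{p-2}\abs{h}_Q^2+C_2(p,Q)\abs{h}_Q^p\,,
\end{equation*}
applied pointwise to each jump and integrated against the compensator. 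Choosing $C_1=2^{p-3}p(p-1)\bigl(1+(p-2)\norm{Q^{-1}}\bigr)$ and $C_2=\tfrac{2^{p-2}}{p-1}\bigl(1+(p-2)\norm{Q^{-1}}\bigr)$ reproduces exactly the last two terms of \cref{ET1.4A}. Assembling (a)--(c) and factoring out $\tfrac{p}{2}\abs{Z}_Q^{p-2}$, hypothesis \cref{ET1.4A} yields $\frac{\D}{\D t}\Exp[\abs{Z(t)}_Q^p]\le-c(p)\Exp[\abs{Z(t)}_Q^p]$, and Gronwall's lemma gives \cref{ET1.4B}.

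For the invariant measure and \cref{ET1.4C}, the contraction \cref{ET1.4B} makes $P_T$ a strict $\sW_p$-contraction on $\cP_p(\RR^n)$ for any $T$ large enough; combined with the completeness of $(\cP_p(\RR^n),\sW_p)$ and a Foster--Lyapunov computation with $V(x)=\abs{x}_Q^p$ (based on the same one-step estimate, which provides the preservation of $\cP_p$), the Banach fixed-point theorem produces a unique invariant $\uppi\in\cP_p(\RR^n)$. Inequality \cref{ET1.4C} then follows by taking an optimal $\sW_p$-coupling $\Pi$ of $\upmu$ and $\uppi$, integrating \cref{ET1.4B} with $(x,y)$ sampled from $\Pi$, and using $\uppi=\uppi P_t$. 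For the addendum ($\sigma$ constant, $\upnu$ state-independent, $p\in[1,2)$), the Brownian and Poisson increments cancel identically under the synchronous coupling, so $Z(t)$ satisfies the pathwise ODE $\dot Z(t)=\varDelta_{Z(t)}b(Y(t))$; a standard $C^2$-mollification of $\abs{\cdot}_Q^p$ near the origin handles the non-smoothness, and since the Itô correction terms are absent, the same one-step inequality is recovered in the limit.

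The main obstacle is producing the Taylor-type inequality above with the precise constants that match \cref{ET1.4A}: the $\norm{Q^{-1}}$ correction arises from bounding the rank-one contribution of $\nabla^2 F$ in terms of the full $\abs{h}_Q^2$ norm, while the powers of $2$ come from an optimized application of $(s+t)^p\le 2^{p-1}(s^p+t^p)$ when splitting the second-order Taylor remainder into the $\abs{z}^{p-2}\abs{h}^2$ and $\abs{h}^p$ regimes. Once this inequality is in place with the correct constants, matching it term-by-term to \cref{ET1.4A} is routine, and the $\sW_p$-contraction, the existence of the invariant measure, and the $\sW_p$-convergence statement follow by the standard arguments sketched above.
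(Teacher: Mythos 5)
Your proposal follows the same route as the paper: synchronous coupling, It\^o's formula applied to $\Lyap_p(z)\df\abs{z}_Q^p$ along the coupled difference $Z=X^{x+z}-X^{x}$, a pointwise bound for the Dynkin-type drift obtained from \cref{ET1.4A}, Gronwall, and then the Banach fixed-point argument on $(\cP_p(\RR^n),\sW_p)$ to extract $\uppi$ and \cref{ET1.4C}. The paper packages the intermediate computation as a ``coupled generator'' $\Tilde\Lg f(x;z)$ and verifies $\Tilde\Lg\Lyap_p(x;z)\le -c(p)\Lyap_p(z)$ directly, which is equivalent to your Taylor-type inequality on the jump increment; this is bookkeeping, not a different method. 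Two concrete corrections, though.

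First, your stated constants in the jump Taylor bound do not match \cref{ET1.4A}. Working from
\begin{equation*}
\Lyap_p(z+h)-\Lyap_p(z)-\langle h,\nabla\Lyap_p(z)\rangle
\,\le\, p(p-1)\,\abs{h}_Q^2\int_0^1(1-t)\,\abs{z+th}_Q^{p-2}\,\D t\,,
\end{equation*}
the splitting $\abs{z+th}_Q^{p-2}\le 2^{p-3}\bigl(\abs{z}_Q^{p-2}+t^{p-2}\abs{h}_Q^{p-2}\bigr)$ together with $\int_0^1(1-t)\D t=\tfrac12$ and $\int_0^1(1-t)t^{p-2}\D t=\tfrac1{p(p-1)}$ yields constants $C_1=p\,2^{p-4}\bigl(1+(p-2)\norm{Q^{-1}}\bigr)$ and $C_2=\tfrac{2^{p-3}}{p-1}\bigl(1+(p-2)\norm{Q^{-1}}\bigr)$ after factoring out $\tfrac p2\abs{z}_Q^{p-2}$. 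Your $C_1=2^{p-3}p(p-1)(1+(p-2)\norm{Q^{-1}})$ and $C_2=\tfrac{2^{p-2}}{p-1}(1+(p-2)\norm{Q^{-1}})$ are too large by factors $2(p-1)$ and $2$, respectively; as stated they would not be absorbed by \cref{ET1.4A}.

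Second, for the addendum ($\sigma$ constant, $\upnu$ state-independent, $p\in[1,2)$), a generic ``standard $C^2$-mollification'' of $\abs{\cdot}_Q^p$ near zero is not automatically sufficient: you need the mollified function $V_\varepsilon$ to satisfy the same drift contraction $\Tilde\Lg V_\varepsilon(x;z)\le -c(p)V_\varepsilon(z)$ uniformly in $\varepsilon$, and this is a structural property, not a generic consequence of smoothing. The paper uses the specific
$V_{\varepsilon,p}(z)=\abs{z}_Q^{p+1}/(\varepsilon+\abs{z}_Q^2)^{\nicefrac12}$,
for which a direct computation gives $\Tilde\Lg V_{\varepsilon,p}(x;z)\le -c(p)\tfrac{\varepsilon(p+1)/p+\abs{z}_Q^2}{\varepsilon+\abs{z}_Q^2}V_{\varepsilon,p}(z)\le -c(p)V_{\varepsilon,p}(z)$ with the \emph{same} constant; the claim then follows by Gronwall, Fatou, and monotone convergence as $\varepsilon\downarrow 0$. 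You should single out this choice rather than invoke an unspecified mollifier.
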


We remark that  ergodic properties of a Markov process with respect to
the $\sW_p$-distance are invariant under the Bochner's random time-change method. 
Recall that a subordinator $\process{S}$ is a nondecreasing L\'{e}vy process
on $\left[0,\infty\right)$ with Laplace transform
$\mathbb{E}\bigl[\E^{-uS_t}\bigr] = \E^{-t\psi(u)}$,
$u,t\geq 0$.
The characteristic (Laplace) exponent $\psi\colon(0,\infty)\to(0,\infty)$
is a Bernstein function, i.e. it is of class $C^\infty$ and
$(-1)^n\psi^{(n)}(u)\ge0$ for all $n\in\NN$.
It is well known that every Bernstein function admits a unique
(L\'{e}vy-Khintchine) representation 
\begin{equation*}\psi(u)\,=\,b_Su+\int_{(0,\infty)}(1-\E^{-uy})\,\upnu_S(\D y) \qquad \forall\,u\ge0\,,\end{equation*}
where $b_S\geq0$ is the drift parameter and $\upnu_S(\D y)$ is a L\'{e}vy measure,
i.e. a Borel measure on $\mathfrak{B}\bigl((0,\infty)\bigr)$ satisfying
$\int_{(0,\infty)}(1\wedge y)\,\upnu(\D y)<\infty$.
For additional reading on subordinators and Bernstein functions we refer the reader to the
monograph \cite{Schilling-Song-Vondracek-Book-2012}.
Suppose $\process{X}$ is a Markov process on $\bigl(\X,\mathfrak{B}(\X)\bigr)$ with
transition kernel $p(t,x,\D y)$, and let
$\process{S}$ be a subordinator with characteristic exponent $\psi(u)$,
independent of $\process{X}$. 
The process $X^{\psi}(t)\df X\bigl(S(t)\bigr)$, $t\ge0$, obtained from $\process{X}$ by 
a random time change through $\process{S}$, is referred to as the subordinate
process $\process{X}$ with subordinator $\process{S}$ in the sense of Bochner.
It is easy to see that $\process{X^\psi}$ is again a Markov process with
transition kernel
\begin{equation*}p^\psi(t,x,\D y)\,=\,\int_{\left[0,\infty\right)} p(s,x,\D y)\,\upmu_t(\D s)\,,
\qquad t\ge0\,,\quad x\in\RR^n\,,\end{equation*}
where $\upmu_t(\cdot)=\mathbb{P}(S(t)\in\cdot)$.
It is also elementary to check that if $\uppi(\D x)$ is an invariant measure for
$\process{X}$, then it is also invariant for the subordinate process $\process{X^\psi}$. 

%%%%%%%%%%%%%%%%%%%%%%%%%%%%%%%%%%%%%%%%%%%%%%%%%%%%%%%%%%%%%%%%%%%%%%%%%%%%%%%%
\begin{proposition}\label{P1.1} 
	Assume that $\process{X}$ admits an invariant $\uppi\in\mathcal{P}(\X)$ such that
	$\sW_{p}(\updelta_x P_t,\uppi)\le c(x)\,r(t)$ for some $p\ge1$, and all $t\ge0$
	and $x\in\X$, where $r\colon[0,\infty)\to[1,\infty)$ is Borel measurable,
	and $c\colon\X\to[0,\infty)$. 
	Then,
	\begin{equation*}
		\sW_{p}(\updelta_x P^\psi_t,\uppi) \,\le\, c(x)\,r_\psi(t)\qquad\forall\,
		(t,x)\in[0,\infty)\times\X\,,
	\end{equation*}
	where
	$r_\psi(t)\df\Bigl(\mathbb{E}\Bigl[\bigl(r(S(t))\bigr)^p\Bigr]\Bigr)^{\nicefrac{1}{p}}$. 
\end{proposition}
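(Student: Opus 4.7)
The plan is to exploit the mixture representation of the subordinate transition kernel together with the convexity of $\sW_p^p$ under mixing. By the definition of Bochner subordination,
\begin{equation*}
\updelta_x P^\psi_t(\D y)\,=\,\int_{[0,\infty)} p(s,x,\D y)\,\upmu_t(\D s)\,,
\end{equation*}
where $\upmu_t(\cdot)=\Prob\bigl(S(t)\in\cdot\bigr)$. Since $\uppi(\D y)$ is invariant for $\process{X}$, it also admits the trivial mixture representation $\uppi=\int_{[0,\infty)}\uppi\,\upmu_t(\D s)$, so the problem reduces to estimating the $\sW_p$-distance between two mixtures indexed by the same probability measure $\upmu_t$.

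The key ingredient is the standard convexity inequality for $\sW_p$: if $\upmu_1=\int \upalpha_s\,\kappa(\D s)$ and $\upmu_2=\int\upbeta_s\,\kappa(\D s)$ for some probability measure $\kappa(\D s)$ and measurable families $\{\upalpha_s\},\{\upbeta_s\}\subseteq\cP_p(\X)$, then
\begin{equation*}
\sW_p(\upmu_1,\upmu_2)^p \,\le\, \int \sW_p(\upalpha_s,\upbeta_s)^p\,\kappa(\D s)\,.
\end{equation*}
To establish this, I would take a measurable selection $s\mapsto\Pi_s$ of optimal $\sW_p$-couplings of $\upalpha_s$ and $\upbeta_s$ (see, e.g., \cite[Corollary~5.22]{Villani-Book-2009}), observe that the mixture $\Pi\df\int\Pi_s\,\kappa(\D s)$ is itself a coupling of $\upmu_1$ and $\upmu_2$, and compute its transport cost by Fubini to get precisely the right-hand side above.

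Applying this inequality with $\upalpha_s=\updelta_x P_s$, $\upbeta_s=\uppi$, and $\kappa=\upmu_t$, and inserting the hypothesis $\sW_p(\updelta_x P_s,\uppi)\le c(x)\,r(s)$, yields
\begin{equation*}
\sW_p(\updelta_x P^\psi_t,\uppi)^p \,\le\, c(x)^p\int_{[0,\infty)}\bigl(r(s)\bigr)^p\,\upmu_t(\D s)
\,=\, c(x)^p\,\Exp\Bigl[\bigl(r(S(t))\bigr)^p\Bigr]\,,
\end{equation*}
and taking $p$-th roots completes the argument. The only nonroutine step is the measurable selection of optimal couplings, but this is classical in the locally compact Polish setting assumed here, so I do not anticipate any serious obstacle beyond citing it. In particular, if $\Exp\bigl[\bigl(r(S(t))\bigr)^p\bigr]=\infty$ the conclusion is vacuous, so no integrability assumption on $r$ beyond what is implicit in the hypothesis is required.
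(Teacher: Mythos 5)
Your proposal is correct and takes essentially the same route as the paper: both exploit the subordination mixture $\updelta_x P^\psi_t=\int p(s,x,\cdot)\,\upmu_t(\D s)$ and bound $\sW_p^p$ of the mixture by the $\upmu_t$-average of $\sW_p^p(\updelta_x P_s,\uppi)$, obtained by integrating optimal couplings $\Pi_s$ against $\upmu_t$ to produce a (generally suboptimal) coupling of $\updelta_x P^\psi_t$ and $\uppi$. The one place you are slightly more careful than the paper is in flagging the need for a measurable selection $s\mapsto\Pi_s$ (you cite Villani's Corollary~5.22), whereas the paper simply invokes existence of an optimal $\Pi_s$ for each fixed $s$ via Villani's Theorem~4.1 and treats measurability as implicit; this is a genuine but minor gap in the paper's write-up that your version closes.
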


Ergodic properties of Markov processes under subordination in the $f$-norm
are discussed in
\cite{Deng-2020,Deng-Schilling-Song-2017,Deng-Schilling-Song-2017-Errata}.

%%%%%%%%%%%%%%%%%%%%%%%%%%%%%%%%%%%%%%%%%%%%%%%%%%%%%%%%%%%%%%%%%%%%%%%%%%%%%%%%

\subsection{Literature review}

Our work contributes to the understanding of the ergodic properties of
Markov processes.
Most of the existing literature focuses on characterizing the exponential
or subexponential ergodicity under the $f$-norm, and in particular the total
variation norm, see 
\cite{AFV15,CF09,Douc-Fort-Guilin-2009,DFMS-04,Down-Meyn-Tweedie-1995,Fort-03,
	Fort-Roberts-2005,G-09,LZZ-10,Meyn-Tweedie-Book-2009,Meyn-Tweedie-AdvAP-II-1993,
	Meyn-Tweedie-AdvAP-III-1993,TT-94}
and the references therein. 
However, there have been some recent developments in understanding ergodic properties
of Markov processes (both continuous and discrete time) under the Wasserstein distances;
see \cite{Butkovsky-14,DFM-16,friesen2019exponential,Kulik-Book-2018,
	liang2019exponential,Lazic-Sadric-2020,Eberle-2011,Eberle-2015,Wang-16,
	Luo-16,majka2017coupling}. 
As already mentioned, exponential and subexponential convergence rates in the
$\sW_1$-distance for general Markov processes that are (possibly) not irreducible
or aperiodic are established in \cite{Butkovsky-14,DFM-16,Kulik-Book-2018},
under the Foster-Lyapunov condition in \cref{ET1.1A}, contractivity of the
underlying metric, and smallness of sublevel sets of the corresponding Lyapunov function. 
Using the coupling approach, the authors in \cite{Eberle-2011,Eberle-2015,Luo-16}
studied exponential ergodicity with respect to a class of Wasserstein distances
for SDEs driven by an additive Brownian noise term and
a drift term satisfying an asymptotic flatness property at infinity.
Under the same assumption on the drift term,
these results have been extended in \cite{majka2017coupling,Wang-16}
to allow for more general additive L\'evy noises.
Subexponential ergodicity with respect to the $\sW_p$-distance for stochastic
differential equations driven by an additive L\'evy noise term, with a drift term
satisfying asymptotic flatness property at zero, has been studied
in \cite{Lazic-Sadric-2020}.
By combining the Foster-Lyapunov method with the coupling approach, exponential
ergodicity with respect to a class of $f$-norms and Wasserstein distances
(given in terms of the underlying Lyapunov function)
is established in \cite{liang2019exponential}
for  a class of Mckean-Vlasov SDE with L\'evy noise. 
Lastly,  exponential ergodicity with respect
to the $\sW_1$-distance for one-dimensional positive-valued stochastic
differential equations  with jumps and the drift term satisfying 
asymptotic flatness property has been studied in \cite{friesen2019exponential}.

Our results on both exponential and subexponential ergodicity under the $\sW_p$-distance
contribute to this active research topic. Of particular interest is the 
result obtained in \cref{T1.2} which seems to be completely new in the literature,
and which, in some cases, allows one to conclude that the obtained upper bound on
the rate of convergence is sharp.   

As we have already remarked, irreducibility and aperiodicity are crucial
structural properties of the underlying process used in \cref{T1.1,T1.2,T1.3}.
There is a vast literature on these, and related questions such as the strong
Feller property and heat kernel estimates of Markov processes. 
In particular, we refer the readers to 
\cite{ari,chen-chen-wang,chen-hu-xie-zhang,chen-zhang,chen-zhang2,grz-sz,
	kim-lee,kim-sig-vond,Knopova-Schilling-2012,Knopova-Schilling-2013,
	Kolokoltsov-2000,Kolokoltsov-Book-2011,Kwon-Lee-1999, Pang-Sandric-2016,
	Sandric-TAMS-2016,Stroock-1975}
for the case of a class of Markov L\'evy-type processes with bounded coefficients,
and to
\cite{Arapostathis-Pang-Sandric-2019,Bass-Cranston-1986,Ishikawa-2001,
	Knopova-Kulik-2014,Lazic-Sadric-2020,Masuda-2007,Masuda-Erratum-2009, 
	Picard-1996,Picard-Erratum-2010,Sadric-Valentic-Wang-2019,stramer,Xi-Zhu-2019}
for the case of a class of It\^{o} processes. 

Recall that the Foster-Lyapunov condition in \cref{ET1.1A} implies that 
for any $\varepsilon>0$ the $\phi\circ\Phi^{-1}$-modulated moment of the
$\varepsilon$-shifted hitting time
$\tau_C^\varepsilon\df\inf\{t\ge\delta\colon X(t)\in C\}$ of $\process{X}$
of $C$ (with respect to $\mathbb{P}_x$) is finite and controlled by $\Lyap(x)$
(see \cite[Theorem~4.1]{Douc-Fort-Guilin-2009}). 
However, this property in general does not immediately imply ergodicity of
$\process{X}$.
Namely, we also need to ensure that a similar property holds for any
other ``reasonable'' set. 
If $\process{X}$ is irreducible with irreducibility measure $\upvarphi(\D x)$,
then indeed for any $\varepsilon>0$ the $\phi\circ\Phi^{-1}$-modulated moment
of $\tau_B^\varepsilon$, for any $B\in\mathfrak{B}(\X)$
with $\upvarphi(B)>0$, is again finite and controlled by $\Lyap(x)$
(see \cite[the discussion after Theorem~4.1]{Douc-Fort-Guilin-2009}).
However, $\process{X}$ can also show certain cyclic behavior which destroys ergodicity
(see \cite[Section~5]{Meyn-Tweedie-AdvAP-II-1993} and
\cite[Chapter 5]{Meyn-Tweedie-Book-2009}).
By assuming aperiodicity, which excludes this type of behavior, (sub)exponential
ergodicity in the $\sW_p$-distance of $\process{X}$ follows as discussed in
\Cref{T1.1}, and in the $f$-norm as discussed in \cite[Theorem~1]{Fort-Roberts-2005}.

%%%%%%%%%%%%%%%%%%%%%%%%%%%%%%%%%%%%%%%%%%%%%%%%%%%%%%%%%%%%%%%%%%%%%%%%%%%%%%%%
\subsection{Organization of the article}

In \cref{S2}, we give the proofs of \cref{T1.1,T1.2,T1.3,T1.4,P1.1} together
with some auxiliary lemmas.
Applications of the main results to several classes of Markov processes,
including Langevin tempered diffusion processes, Ornstein-Uhlenbeck processes with jumps, piecewise Ornstein-Uhlenbeck processes with jumps
under constant and stationary Markov controls, state-space models,
and backward recurrence time chains, are contained in \cref{S3}.

%%%%%%%%%%%%%%%%%%%%%%%%%%%%%%%%%%%%%%%%%%%%%%%%%%%%%%%%%%%%%%%%%%%%%%%%%%%%%%%%

\section{Proofs of the main results}\label{S2}

We start with the proof of \cref{T1.1}.

%%%%%%%%%%%%%%%%%%%%%%%%%%%%%%%%%%%%%%%%%%%%%%%%%%%%%%%%%%%%%%%%%%%%%%%%%%%%%%%%
\begin{proof}[Proof of \cref{T1.1}]
	We consider the case when $\T=\RR_+$ only.
	The case when $\T=\Z_+$ proceeds in an analogous way, by employing the results
	from \cite[Theorem~2.8]{DFMS-04} and \cite[Theorem~15.0.2]{Meyn-Tweedie-Book-2009}.
	
	First, under the assumptions of the theorem, it has been shown
	in \cite[Proposition~3.1]{Douc-Fort-Guilin-2009} and
	\cite[Theorem~4.2]{Meyn-Tweedie-AdvAP-III-1993} 
	that $\process{X}$ admits a unique invariant $\uppi\in\cP_{\phi\circ \Lyap}(\X)$.
	This, together with \cref{ET1.1B}, implies that $\uppi\in\cP_{\eta}(\X)$.
	We continue now with the proof of part \ttup{i}. 
	By the Kantorovich-Rubinstein theorem, we have
	\begin{equation*}
		\sW_1(\upmu_1,\upmu_2) \,=\, \sup_{\{f\colon\Lip(f)\le1\}}\,
		\biggl|\int_{\X} f(x)\bigl(\upmu_1(\D{x})-\upmu_2(\D{x})\bigr)\biggr|
		\qquad \forall\,\upmu_1,\upmu_2\in\mathcal{P}_1(\X)\,,
	\end{equation*}
	where the supremum is taken over all Lipschitz continuous functions
	$f\colon\X\to\RR$ with Lipschitz constant $\Lip(f)\le1$.
	We apply \cite[Theorem~3.2]{Douc-Fort-Guilin-2009},
\begin{equation*}
r_*(t)\,=\,\phi\circ\Phi^{-1}(t)\,,\quad
f_*(x)\,=\,\phi\circ\Lyap(x)\,,\quad
\Psi_1(z)\,=\,z^{\nicefrac{(\eta-1)}{\eta}}\,,\quad\text{and}\quad
\Psi_2(z)\,=\,c^{-\nicefrac{1}{\eta}}z^{\nicefrac{1}{\eta}}\,.
\end{equation*}
	Note that if $f\colon\X\to\RR$ is such that $\Lip(f)\le1$ and $f(x_0)=0$, then
	$\abs{f(x)}\le \mathsf{d}(x,x_0)\le \Psi_2\circ f_*(x)$. 
	Thus
	\begin{equation*}
		\sup_{f\colon\Lip(f)\le1}\,
		\biggl|\int_{\X} f(x)\bigl(\upmu_1(\D{x})-\upmu_2(\D{x})\bigr)\biggr|
		\,\le\,
		\sup_{\abs{f} \,\le\, \Psi_2\circ f_*\vee1}\,
		\biggl|\int_{\X} f(x)\bigl(\upmu_1(\D{x})-\upmu_2(\D{x})\bigr)\biggr|
		\,=\, \norm{\upmu_1-\upmu_2}_{\Psi_2\circ f_*\vee1}
	\end{equation*}
(recall the definition of the  $f$-norm in \cref{EFNORM}).
	Now, from
	\cite[(3.5) and (3.6)]{Douc-Fort-Guilin-2009} we have
	\begin{align*}
		\bigl(\Psi_1\circ r_*(t)\vee1\bigr)\sW_1(\updelta_x P_t,\uppi)
		&\,\le\,\bigl(\Psi_1\circ r_*(t)\vee1\bigr)
		\norm{\updelta_x P_t-\uppi\,}_{\Psi_2\circ f_*\vee1}\,\le\, \Bar c\, \Lyap(x)\,,\\[5pt]
		\intertext{and}
		\int_0^\infty\bigl(\Psi_1\circ r_*(s)\vee1\bigr)
		\sW_1(\updelta_x P_s,\updelta_y P_s)\,\D s
		&\,\le\, \int_0^\infty\bigl(\Psi_1\circ r_*(s)\vee1\bigr)
		\norm{\updelta_x P_s-\uppi\,}_{\Psi_2(f_*)\vee1}\,\D s\\
		&\,\le\,\Bar c\, \bigl(\Lyap(x)+\Lyap(y)\bigr)\,,
	\end{align*}
	for some $\Bar c>0$, and all $t\ge0$ and $x,y\in\RR^n$, which proves
	\cref{ET1.1C,ET1.1D}, respectively.
	
	We next prove part \ttup{ii}.
	Applying \cref{ET1.1C} and \cite[(3.5)]{Douc-Fort-Guilin-2009} with
	$\Psi_1(z)=1$, and $\Psi_2(z)=z$, we obtain
	$\Exp_{x} \left[\mathsf{d}(X(t),x_0)^{\eta}\right] \le
	\overline{m}_{\eta}+\Breve{c}\,\Lyap(x)$,
	for some $\Breve{c}>0$, and all $t\ge0$ and $x\in\X$.
	Hence
	\begin{equation}\label{EPT1.1A}
	\Exp_{x} \bigl[\mathsf{d}(X(t),x_0)^p\,\Ind_{\sB_{t}^c(x_0)}\bigl(X(t)\bigr)\bigr]
	\,\le\, t^{p-\eta}\,\bigl(\overline{m}_{\eta}+\Breve{c}\,\Lyap(x)\bigr)
	\qquad \forall\,(t,x)\in[0,\infty)\times\X\,.
	\end{equation}
	Further, for $t\ge0$, $z\in\X$,
	and $\Pi\in\mathcal{C}(\updelta_{z} P_t,\uppi)$, we have
	\begin{equation}\label{EPT1.1B}
\begin{aligned}
	 \int_{\X\times\X}\bigl(\mathsf{d}(x,y)\bigr)^p\,&\Pi(\D x,\D y)\\
	&\,=\,\int_{\sB_{t}(x_0)\times\sB_{t}(x_0)}\bigl(\mathsf{d}(x,y)\bigr)^p\,\Pi(\D x,\D y)
	+\int_{\bigl(\sB_{t}(x_0)\times\sB_{t}(x_0)\bigr)^c}
	\bigl(\mathsf{d}(x,y)\bigr)^p\,\Pi(\D x,\D y)\\
	&\,\le\,(2t)^{p-1}\int_{\X\times\X}\mathsf{d}(x,y)\,\Pi(\D x,\D y)
	+2^{p-1}\int_{\sB_{t}^c(x_0)}\bigl(\mathsf{d}(x,x_0)\bigr)^p
	\bigl[\updelta_{z}P_t(\D x)+\uppi(\D x)\bigr]\,.
	\end{aligned}
	\end{equation}
	Using \cref{EPT1.1A,EPT1.1B}, and the bound 
	$\int_{\sB_{t}^c(x_0)}\bigl(\mathsf{d}(x,x_0)\bigr)^p\,
	\uppi(\D x)\le t^{p-\eta}\,\overline{m}_{\eta}$, 
	we have
	\begin{equation*}
		\sW_p^p(\updelta_x P_t,\uppi)
		\,\le\,(2t)^{p-1}\,\sW_1(\updelta_x P_t,\uppi)
		+ 2^{p-1} t^{p-\eta}\,\bigl(2\overline{m}_{\eta}
		+\Breve{c}\,\Lyap(x)\bigr)\qquad \forall\,(t,x)\in[0,\infty)\times\X\,,
	\end{equation*}
	and combining this with \cref{ET1.1C} we obtain
	\begin{equation*}
		\left(1\vee \left(t^{\eta-p}\wedge t^{1-p}
		\bigl(r_*(t)\bigr)^{\nicefrac{(\eta-1)}{\eta}}\right)\right)\,
		\sW_p^p(\updelta_x P_t,\uppi)
		\,\le\, 2^{p-1} \Bar c\, \Lyap(x)
		+ 2^{p-1}\,\bigl(2\overline{m}_{\eta} +\Breve{c}\,\Lyap(x)\bigr)
	\end{equation*}
	for all $t\ge0$ and $x\in\X$,
	from which \cref{ET1.1E} follows with
	$\Tilde c = 2\max\{1,\Bar c,\Breve{c}\}^{\nicefrac{1}{p}}$.
	
	Moving on to the proof of part \ttup{iii}, note that 
	according to \cite[Proposition 6.1]{Meyn-Tweedie-AdvAP-II-1993},
	\cite[Theorem~4.2]{Meyn-Tweedie-AdvAP-III-1993}, and
	\cite[Theorem~5.2]{Down-Meyn-Tweedie-1995}, there
	exist constants $\mathring{c}>0$ and $\gamma>0$, such that
	\begin{equation}\label{EPT1.1C}
	\norm{\updelta_xP_t-\uppi\,}_{\Lyap} \,\le\,
	\mathring{c}\, \Lyap(x) \,\E^{-\gamma t}\qquad \forall\,(t,x)\in[0,\infty)\times\X\,.
	\end{equation}
	\Cref{ET1.1F} now follows from the Kantorovich-Rubinstein theorem and \cref{ET1.1B}.
	Let $p\in[1,\eta]$.
	First, from \cref{EPT1.1C} we obtain
	$\Exp_{x} \left[\mathsf{d}(X_t,x_0)^{\eta}\right] \le
	\overline{m}_{\eta}+\dot{c}\,\Lyap(x)$,
	for some $\dot{c}>0$, and all $t\ge0$ and $x\in\X$,
	which again implies \cref{EPT1.1A}.
	By \cref{EPT1.1A,EPT1.1B}, we have 
	\begin{equation*}
		\sW_p^p(\updelta_x P_t,\uppi)
		\,\le\, (2t)^{p-1}\,\sW_1(\updelta_x P_t,\uppi)
		+ 2^{p-1} t^{p-\eta}\,\bigl(2\overline{m}_{\eta}
		+\dot{c}\,\Lyap(x)\bigr)\qquad \forall\,(t,x)\in[0,\infty)\times\X\,,
	\end{equation*}
	and combining this with \cref{ET1.1F} we obtain
	\begin{equation*}
		(1\vee t^{\eta-p})\, \sW_p^p(\updelta_x P_t,\uppi)
		\,\le\, 2^{p-1} \check c\,\Lyap(x) + 2^{p-1} \,\bigl(2\overline{m}_{\eta}
		+\dot{c}\,\Lyap(x)\bigr)\qquad \forall\,(t,x)\in[0,\infty)\times\X\,,
	\end{equation*}
	from which \cref{ET1.1G} follows again with
	$\breve{c}= 2\max\{1,\check c,\dot{c}\}^{\nicefrac{1}{p}}$.
	This completes the proof.
\end{proof}

We proceed with the proof of \cref{T1.2}.

%%%%%%%%%%%%%%%%%%%%%%%%%%%%%%%%%%%%%%%%%%%%%%%%%%%%%%%%%%%%%%%%%%%%%%%%%%%%%%%%
\begin{proof}[Proof of \cref{T1.2}]
	We again consider the case when $\T=\RR_+$ only.
	The case when $\T=\Z_+$ proceeds in a similar manner.
	
	Fix some $x_0\in\X$, $p\in[1,\vartheta]$ and
	$\iota\in(0,\theta-\vartheta-\varepsilon)$.
	For $s>0$, define $f_s\colon\X\to[0,\infty)$ by
	\begin{equation*}f_s(x)\,\df\, \begin{cases}
	0\,, & \text{if\ } L(x)\le \frac{s}{2}\,,\\[3pt]
	L(x)-\frac{s}{2}\,, & \text{if\ } L(x)> \frac{s}{2}\,.
	\end{cases}\end{equation*}
	We have
	\begin{equation}\label{EPT1.2A}
	\int_\X \bigl(f_s(x)\bigr)^p\, \uppi (\D x) \,\ge\, \Bigl(\frac{s}{2}\Bigr)^p\,
	\uppi\bigl(\{x\colon L(x) > s\}\bigr)\qquad \forall\,s>0\,.
	\end{equation}
	Since, by assumption,
	$\int_\X\bigl(L(x)\bigr)^{\vartheta+\varepsilon}\,
	\uppi(\D{x})=\infty$,
	there exists an increasing diverging sequence $\{s_n\}_{n\in\NN}\subset[0,\infty)$ such
	that
	\begin{equation}\label{EPT1.2B}
	\Bigl(\frac{s_n}{2}\Bigr)^p\,
	\uppi\bigl(\{x\colon L(x) > s_n\}\bigr)
	\,\ge\,2^ps_n^{p-\vartheta-\varepsilon-\iota}\,.
	\end{equation}
	Note also that
	$\bigr(f_s(x)\bigl)^p \le 2^{\theta-p}
	s^{p-\theta}\,\bigl(L(x)\bigr)^{\theta}\le\frac{2^{\theta-p}}{c}\,
	s^{p-\theta}\,\Lyap(x)$ for all $s>0$ and $x\in\X$. 
	This follows from the facts that $f_s(x)=0$ for $s>0$ and $x\in\X$  such that $L(x)\le s/2$,  
	$$0\,\le\,\frac{f_s(x)}{\frac{s}{2}}\,\le\,\frac{L(x)}{\frac{s}{2}}\qquad \forall\,(s,x) \in (0,\infty)\times\X\,,$$ and $\theta>p\ge1$.
	Thus, by the Foster-Lyapunov equation \cref{ET1.1A}
	(see \cite[Theorem~1.1]{Meyn-Tweedie-AdvAP-III-1993}), we obtain
	\begin{equation}\label{EPT1.2C}
	\int_\X \bigl(f_s(x)\bigr)^p\, \updelta_{x_0} P_t (\D x)
	\,\le\,\frac{2^{\theta-p}}{c}\, s^{p-\theta}
	\bigl(b\, t + \Lyap(x_0)\bigr)\qquad \forall\,s,t>0\,.
	\end{equation}
	Select a sequence $\{t_n\}_{n\in\NN}\subset[0,\infty)$ such that
	\begin{equation}\label{EPT1.2D}
	s_n^{\theta-\vartheta-\varepsilon-\iota} \,=\,
	\frac{2^{\theta-p}}{c}\, \bigl(b\, t_n+ \Lyap(x_0)\bigr)\,.
	\end{equation}
	Combining \cref{EPT1.2A,EPT1.2B,EPT1.2C,EPT1.2D} above we have
	\begin{equation*}
		\begin{aligned}
			\left(\int_\X \bigl(f_{s_n}(x)\bigr)^p\, \uppi (\D x)\right)^{\nicefrac{1}{p}}
			- \left(\int_\X \bigl(f_{s_n}(x)\bigr)^p\,
			\updelta_{x_0} P_{t_n} (\D x)\right)^{\nicefrac{1}{p}}
			&\,\ge\, (s_n)^{\frac{p-\vartheta-\varepsilon-\iota}{p}}\\
			&\,\ge\,
			\Bigl(\tfrac{2^{\theta-p}}{c}\,\bigl(b\, t_n
			+ \Lyap(x_0)\bigr)\Bigr)^{-\frac{\vartheta-p+\varepsilon+\iota}
				{(\theta-\vartheta-\varepsilon-\iota)p}}\quad \forall\,n\in\NN\,.
		\end{aligned}
	\end{equation*}
	The result then follows by \cite[Proposition~7.29]{Villani-Book-2009},
	which asserts that
	\begin{equation*}
		\Biggl|\left(\int_{\X} \bigl(f(x)\bigr)^p\, \upmu_1 (\D x)\right)^{\nicefrac{1}{p}}
		-\left(\int_{\X} \bigl(f(x)\bigr)^p\, \upmu_2 (\D x)\right)^{\nicefrac{1}{p}}\Biggr|
		\,\le\, \mathrm{Lip}\bigl(f\bigr) \,\sW_p(\upmu_1,\upmu_2)
	\end{equation*} for all $\upmu_1,\upmu_2\in\cP_p(\X)$ and Lipschitz $f:\X\to\RR$ with Lipschitz constant $\mathrm{Lip}\bigl(f\bigr).$
\end{proof}

For the proof of \cref{T1.3} we need two auxiliary results
given in \cref{L2.1,L2.2} below. First, recall that $\{X(t)\}_{t\ge0}$ is said to be conservative if $\Prob_x(X(t)\in\RR^n)=1$ for all $t\ge0$ and $x\in\RR^n$, and note that this is equivalent to 
\begin{equation*}\Prob_x\Bigl(\lim_{k\to\infty}\tau_k=\infty\Bigr)\,=\,1\qquad \forall\,x\in\RR^n\,,\end{equation*}
where $\tau_k\df\inf\{t\ge0\colon X_t\in\sB^c_k\}$ for $k\in\NN$ (here it is also essential that $\{X(t)\}_{t\ge0}$ has c\`{a}dl\`{a}g sample paths). Namely, for $t\ge0$ and $x\in\RR^n$   
it holds that $$\Prob_x(X(t)\in\RR^n)\,=\,\Prob_x\Bigl(\lim_{k\to\infty}\tau_k>t\Bigr)\,\ge\, \Prob_x\Bigl(\lim_{k\to\infty}\tau_k=\infty\Bigr)\,=\,\lim_{t\to\infty}\Prob_x\Bigl(\lim_{k\to\infty}\tau_k>t\Bigr)\,=\,\lim_{t\to\infty}\Prob_x(X(t)\in\RR^n)\,.$$

%%%%%%%%%%%%%%%%%%%%%%%%%%%%%%%%%%%%%%%%%%%%%%%%%%%%%%%%%%%%%%%%%%%%%%%%%%%%%%%%
\begin{lem}\label{L2.1}
	Assume \hyperlink{LB}{\ttup{LB}} and \hyperlink{MP}{\ttup{MP}}.
	Then for any $x\in\RR^n$ and any nonnegative $f\in C^{\infty}(\RR^n)$ such that
the map $y\mapsto\int_{\sB^c}f(y+z)\,\upnu(y,\D z)$ is locally bounded,
	$\process{M_f}$ is a $\Prob_x$-local martingale
	{\upshape(}with respect to $\{\mathcal{F}_t\}_{t\ge0}${\upshape)}. 
\end{lem}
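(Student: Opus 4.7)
The plan is a standard truncation and localization argument. I would first choose cutoffs $\chi_k \in C_c^\infty(\RR^n)$ with $0 \le \chi_k \le 1$, $\chi_k \equiv 1$ on $\overline{\sB_k}$, and $\chi_k \equiv 0$ outside $\sB_{k+1}$, and set $f_k \df f\chi_k \in C_c^\infty(\RR^n)$. Hypothesis \hyperlink{MP}{\ttup{MP}} then makes $\{M_{f_k}(t)\}_{t \ge 0}$ a genuine $\Prob_x$-martingale for each $k$. Let $\tau_k \df \inf\{t \ge 0 \colon X(t)\notin\sB_k\}$. Because $\process{X}$ is conservative under \hyperlink{MP}{\ttup{MP}}, $\tau_k \uparrow \infty$ $\Prob_x$-a.s., so $\{\tau_k\}$ is the natural candidate localizing sequence; optional sampling yields that $\{M_{f_k}(t\wedge\tau_k)\}_{t\ge 0}$ is a $\Prob_x$-martingale. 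It therefore suffices to show that $\{M_f(t\wedge\tau_k)\}_{t\ge 0}$ is itself a martingale for each $k$.

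For $x\in\sB_k$, $\chi_k\equiv 1$ in a neighborhood of $x$, so $\nabla\chi_k(x)=0$ and $\nabla^2\chi_k(x)=0$, and the drift and diffusion components of $\mathcal{L}f$ and $\mathcal{L}f_k$ agree at $x$. A direct calculation of the nonlocal terms in \cref{OP-L} gives, for every $x\in\sB_k$,
\begin{equation*}
\mathcal{L}f(x) - \mathcal{L}f_k(x) \,=\, \int_{\sB^c} f(x+y)\bigl(1 - \chi_k(x+y)\bigr)\,\upnu(x,\D y)\,,
\end{equation*}
which is nonnegative (using $f\ge 0$ and $\chi_k\le 1$) and uniformly bounded on $\overline{\sB_k}$ by the local-boundedness hypothesis on $y\mapsto\int_{\sB^c}f(y+z)\,\upnu(y,\D z)$. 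Likewise, the boundary term $(f-f_k)(X(s))=f(X(s))(1-\chi_k(X(s)))$ vanishes identically on $[0,\tau_k)$ (since $X(s)\in\sB_k$ there) and collects at most the single exit jump of $X$ out of $\sB_k$ at $s=\tau_k$.

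The key step is to identify $M_f(\cdot\wedge\tau_k)-M_{f_k}(\cdot\wedge\tau_k)$ with a compensated-jump local martingale. Assumption \hyperlink{MP}{\ttup{MP}} together with the standard semimartingale characterization of Markov jump-diffusions \cite[Theorem~II.2.42]{Jacod-Shiryaev-2003} identifies $\upnu(X(s-),\D y)\,\D s$ as the predictable compensator of the jump random measure $\mu^X(\D s,\D y)$ of $\process{X}$. Expressing the boundary term as a sum over jumps of $(f-f_k)\circ X$ up to $t\wedge\tau_k$ and subtracting its compensator yields the identity
\begin{equation*}
M_f(t\wedge\tau_k)-M_{f_k}(t\wedge\tau_k) \,=\, \int_0^{t\wedge\tau_k}\!\!\int_{\sB^c} f(X(s-)+y)\bigl(1-\chi_k(X(s-)+y)\bigr)\,\bigl[\mu^X(\D s,\D y)-\upnu(X(s-),\D y)\,\D s\bigr]\,,
\end{equation*}
which is a $\Prob_x$-local martingale; the uniform boundedness of its predictable compensator on $\overline{\sB_k}$ promotes it to a genuine martingale stopped at $\tau_k$. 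Hence $\{M_f(t\wedge\tau_k)\}_{t\ge 0}$ is a $\Prob_x$-martingale for every $k$, and letting $k\to\infty$ makes $\{\tau_k\}$ a localizing sequence for $\{M_f(t)\}_{t\ge 0}$.

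The main obstacle is the third step: rigorously identifying $\upnu(x,\D y)\,\D s$ as the compensator of $\mu^X$ under \hyperlink{MP}{\ttup{MP}} alone, and justifying the resulting compensated stochastic integral against the (possibly unbounded) integrand $f(x+y)(1-\chi_k(x+y))$. The local-boundedness hypothesis on $y\mapsto\int_{\sB^c}f(y+z)\,\upnu(y,\D z)$ is essential here: it furnishes the uniform bound on $\overline{\sB_k}$ needed to ensure the jump integral is a true local martingale (rather than merely a sigma-martingale) and to legitimize the optional sampling invoked above.
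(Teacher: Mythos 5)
Your approach is genuinely different from the paper's, and the difference matters. The paper's proof stays entirely at the level of expectations: it applies \hyperlink{MP}{\ttup{MP}} to the truncations $f\chi_k$, stops at a fixed $\tau_j$, and then passes $k\to\infty$ inside $\Exp_x[\,\cdot\,]$ using monotone and dominated convergence, with the local boundedness of $y\mapsto\int_{\sB^c}f(y+z)\,\upnu(y,\D z)$ and \hyperlink{LB}{\ttup{LB}} providing the dominating bound for $\mathcal{L}(f\chi_k)$ on the event $\{X\in\overline\sB_j\}$. No semimartingale decomposition of $X$ is ever used. Your proposal instead tries to realize $M_f(\cdot\wedge\tau_k)-M_{f_k}(\cdot\wedge\tau_k)$ as a compensated jump integral and then argue that the bounded predictable compensator promotes it to a true martingale.

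The problem is that your key step — identifying $\upnu(X(s-),\D y)\,\D s$ as the predictable compensator of the jump measure $\mu^X$ of $\process{X}$ — is not a consequence of \hyperlink{MP}{\ttup{MP}} and \hyperlink{LB}{\ttup{LB}} without further work. Under \hyperlink{MP}{\ttup{MP}} one does not even know a priori that $X$ is a semimartingale; the cited \cite[Theorem~II.2.42]{Jacod-Shiryaev-2003} runs in the opposite direction (from prescribed semimartingale characteristics to a solution of the martingale problem) and the paper invokes it only in the It\^{o}-process setting where $X$ is a semimartingale by construction. You acknowledge this as ``the main obstacle,'' but the local boundedness hypothesis does not resolve it — it controls the integrability of the compensated integral \emph{once the compensator is identified}, not the identification itself. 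This is a genuine gap, and it is exactly the complication the paper's soft convergence argument is designed to avoid.

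A secondary slip: the displayed identity for $\mathcal{L}f(x)-\mathcal{L}f_k(x)$ requires $\chi_k\equiv 1$ on $\sB_1(x)$, so it holds for $x\in\sB_{k-1}$, not for all $x\in\sB_k$; for $x$ near $\partial\sB_k$ and $y\in\sB$ the point $x+y$ can lie where $\chi_k<1$, so the small-jump integral $\int_{\sB}\bigl(\mathfrak{d}_1f-\mathfrak{d}_1f_k\bigr)(x;y)\,\upnu(x,\D y)$ does not vanish. This is repairable by shrinking the stopping ball or enlarging the cutoff, but as written the formula is not correct on all of $\sB_k$.
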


\begin{proof}
	For $k\in\NN$, let $\chi_k\in C_c^\infty(\RR^n)$ be such that
	$\Ind_{\sB_k}(x)\le\chi_k(x)\le\Ind_{\sB_{k+1}}(x)$
	and $\chi_k(x)\le\chi_{k+1}(x)$ for $x\in\RR^n$.
	Then, for any $x\in\RR^n$, $k,j\in\NN$ and $s,t\ge0$, $s\le t$,
	\cite[Theorem~2.2.13]{Ethier-Kurtz-Book-1986} implies that
	\begin{equation*}%\label{EPA}
		\Exp_x\bigl[M_{f\chi_k}(t\wedge\tau_j) \,|\,\mathcal{F}_s\bigr]
		\,=\,M_{f\chi_k}(s\wedge\tau_j)\,.
	\end{equation*}
	Next, by employing the monotone and dominated
	convergence theorems, we easily see that 
	\begin{equation*}\Exp_x\left[\biggl|\int_0^t\mathcal{L}f\bigl(X(s\wedge\tau_j)\bigr)\,\D s\biggr|\right]
	\,<\,\infty\qquad \forall\,(x,j)\in\RR^n\times\NN\,,\end{equation*}
	and
	\begin{align*}
		\Exp_x\bigl[f\bigl(X(t\wedge\tau_j)\bigr)\bigr]&\,=\,\lim_{k\to\infty}\,
		\Exp_x\bigl[f\bigl(X(t\wedge\tau_j)\bigr)\chi_k\bigl(X(t\wedge\tau_j)\bigr)\bigr]\\
		&\,=\,f(x)+\lim_{k\to\infty}\Exp_x\biggl[\int_0^t\mathcal{L}
		\bigl(f\bigl(X(s\wedge\tau_j)\bigr)\chi_k\bigl(X(s\wedge\tau_j)\bigr)\,\D s\biggr]\\
		&\,=\,f(x)+\Exp_x\biggl[\int_0^t\mathcal{L}f\bigl(X(s\wedge\tau_j)\bigr)\,\D s\biggr]
		\qquad \forall\,(x,j)\in\RR^n\times\NN\,.
	\end{align*}
	Hence, for each $x\in\RR^n$, $t\ge0$ and $j\in\NN$, $M_f(t\wedge\tau_j)$ is integrable.
	Also,
\begin{equation*}\lim_{k\to\infty}\Exp_x\bigl[M_{f\chi_k}(t\wedge\tau_j)\,|\,\mathcal{F}_s\bigr]
\,=\,\Exp_x\bigl[M_{f}(t\wedge\tau_j)\,|\,\mathcal{F}_s\bigr]\,,\quad\text{and\ }
\lim_{k\to\infty}M_{f\chi_k}(s\wedge\tau_j)\,=\,M_{f}(s\wedge\tau_j)\,,\end{equation*}
for all $x\in\RR^n$, $t\ge s\ge0$, and $j\in\NN$.
	The assertion now follows from the conservativeness of $\process{X}$.
\end{proof}

For $f\in C^1(\RR^n)$ we let 
\begin{equation*}\mathfrak{d} f(x;y)\,\df\,f(x+y)-f(x)-\langle y,\nabla f(x)\rangle\,,
\qquad x,y\in\RR^n\,,\end{equation*}
\begin{equation*}
\mathfrak{J}_{1,\upnu}[f](x)\,\df\,\int_{\RR^n}\mathfrak{d}_1 f(x;y)\, \upnu(x,\D{y})\,,
\quad\text{and\ }
\mathfrak{J}_\upnu[f](x)\,\df\,\int_{\RR^n}\mathfrak{d} f(x;y)\, \upnu(x,\D{y})\,,
\quad x\in\RR^n\,,
\end{equation*}
whenever the integrals are well defined. 

%%%%%%%%%%%%%%%%%%%%%%%%%%%%%%%%%%%%%%%%%%%%%%%%%%%%%%%%%%%%%%%%%%%%%%%%%%%%%%%%
\begin{lem}\label{L2.2}
	Suppose that $\theta_\upnu>0$, and
	that \cref{ET1.3A} holds
	for some $\theta\in(0,\theta_\upnu]\cap\Theta_\upnu$.
	Then, we have the following:
	\begin{enumerate}
		\item[\ttup{i}] If $\theta\in(0,1)$, and 
		$f\in C^2(\RR^n)$ satisfies
		\begin{equation*}
		\sup_{x\in\sB^c}\,|x|^{-\theta}\max\bigl(\abs{f(x)},\abs{x}\,\abs{\nabla f(x)},
		\abs{x}^2\,\norm{\nabla^2f(x)}\bigr)\,<\,\infty\,,
		\end{equation*}
		then $\mathfrak{J}_{1,\upnu}[f](x)$
		vanishes at infinity.
		\item [\ttup{ii}]
		If $\theta\ge1$, and $f\in C^2(\RR^n)$ satisfies
		\begin{equation*}
			\sup_{x\in\sB^c}\,|x|^{1-\theta}\max\bigl(\abs{\nabla f(x)},
			\abs{x}\,\norm{\nabla^2f(x)}\bigr)\,<\,\infty\,,
		\end{equation*}
		then $\mathfrak{J}_\upnu[f](x)$
		vanishes at infinity when $\theta\in[1,2)$, and the map
		$x\mapsto(1+\abs{x})^{2-\theta}\,\mathfrak{J}_\upnu[f](x)$
		is bounded when $\theta\ge2$.
		
		\item [\ttup{iii}]
		If \cref{ET1.3B} holds
		for some $\theta>0$, then there exist
		$c>0$ and $r=r(\zeta)>0$, such that for any
		$\zeta\in\bigl(0,\frac{1}{2}\theta\norm{Q}^{-\nicefrac{1}{2}}\bigr)$ we have
		\begin{equation}\label{EL2.2A}
		\mathfrak{J}_\upnu\bigl[\widetilde\Lyap_{Q,\zeta}\bigr](x)\,\le\,
		c\,\zeta^{\nicefrac{3}{2}}\,\widetilde\Lyap_{Q,\zeta}(x)\qquad\forall\,x\in \sB_r^c\,.
		\end{equation}
	\end{enumerate}
\end{lem}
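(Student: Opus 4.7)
The unifying strategy is to split each jump integral against $\upnu(x,\D y)$ into small jumps $y\in\sB$ and large jumps $y\in\sB^c$. The small-jump piece will always be controlled by a second-order Taylor expansion of $f$ (resp.\ $\widetilde\Lyap_{Q,\zeta}$) against the uniformly bounded integral $\int_{\sB}|y|^2\upnu(x,\D y)$, which is finite by hypothesis. For parts \ttup{i} and \ttup{ii} I further partition $\sB^c$ at the threshold $|y|\asymp\epsilon\abs{x}$, so that Taylor/mean-value bounds handle the medium range while only the genuinely large range $\{|y|>\epsilon\abs{x}\}$ needs the uniform tail assumption \cref{ET1.3A}. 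Part \ttup{iii} uses the exponential integrability \cref{ET1.3B} directly.

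For \ttup{i}, Taylor with the Hessian estimate $\norm{\nabla^2 f(z)}\le C\abs{z}^{\theta-2}$ and $\theta<2$ makes the $\sB$-piece $O(\abs{x}^{\theta-2})\to 0$. On $\{1\le\abs{y}\le\epsilon\abs{x}\}$ the mean value theorem plus $\abs{\nabla f(z)}\le C\abs{z}^{\theta-1}$, combined with the elementary bound $\abs{y}\le\abs{y}^{\theta}(\epsilon\abs{x})^{1-\theta}$ and the uniform estimate $\sup_x\int\abs{y}^{\theta}\upnu(x,\D y)<\infty$, yields a piece of order $\epsilon^{1-\theta}$ uniformly in $x$. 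On $\{\abs{y}>\epsilon\abs{x}\}$, termwise bounds $\abs{f}\le C(1+\abs{\cdot})^{\theta}$ together with the Markov inequality $\upnu(x,\{\abs{y}>\epsilon\abs{x}\})\le(\epsilon\abs{x})^{-\theta}\int_{\abs{y}>\epsilon\abs{x}}\abs{y}^{\theta}\upnu(x,\D y)$ and \cref{ET1.3A} give a piece that vanishes as $\abs{x}\to\infty$ for each fixed $\epsilon$. Sending $\abs{x}\to\infty$ and then $\epsilon\downarrow 0$ concludes. Part \ttup{ii} follows the same template: the linear term $\langle y,\nabla f(x)\rangle$ is always subtracted, so Taylor applies on the entire medium range and produces an $\epsilon^{2-\theta}$ bound when $\theta\le 2$ and a uniform $O(\abs{x}^{\theta-2})$ bound when $\theta>2$; the $\{\abs{y}>\epsilon\abs{x}\}$ tail is handled as before via $|\mathfrak{d}f(x;y)|\le C(1+\abs{x}+\abs{y})^{\theta}+C\abs{y}(1+\abs{x})^{\theta-1}$.

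For \ttup{iii}, a direct computation gives $\nabla\widetilde\Lyap_{Q,\zeta}(z)=\zeta e^{\zeta\abs{z}_Q}Qz/\abs{z}_Q$ and $\norm{\nabla^2\widetilde\Lyap_{Q,\zeta}(z)}\le(C\zeta/\abs{z}_Q+\zeta^{2}\norm{Q})\,e^{\zeta\abs{z}_Q}$ outside the smoothing region of $\chi_Q$, whence Taylor bounds the small-jump contribution by $C(\zeta/\abs{x}+\zeta^{2})\widetilde\Lyap_{Q,\zeta}(x)$. For the large jumps, factor $\mathfrak{d}\widetilde\Lyap_{Q,\zeta}(x;y)=\widetilde\Lyap_{Q,\zeta}(x)\bigl[(e^{\alpha}-1-\alpha)+(\alpha-\beta)\bigr]$ with $\alpha\df\zeta(\abs{x+y}_Q-\abs{x}_Q)$ and $\beta\df\zeta\langle y,Qx\rangle/\abs{x}_Q$; a second-order expansion of $z\mapsto\abs{z}_Q$ at $x$ yields $\alpha-\beta=O(\zeta\abs{y}^{2}/\abs{x})$, while $|e^{\alpha}-1-\alpha|\le\alpha^{2}e^{|\alpha|}\le C\zeta^{2}\abs{y}^{2}e^{\zeta\norm{Q}^{1/2}\abs{y}}$. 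The constraint $\zeta<\tfrac12\theta\norm{Q}^{-1/2}$ forces $e^{\zeta\norm{Q}^{1/2}\abs{y}}\le e^{\theta\abs{y}/2}$, so \cref{ET1.3B} bounds $\int_{\sB^c}\abs{y}^{2}e^{\zeta\norm{Q}^{1/2}\abs{y}}\upnu(x,\D y)$ uniformly in $x$, giving a large-jump contribution of order $(\zeta^{2}+\zeta/\abs{x})\widetilde\Lyap_{Q,\zeta}(x)$. The hard point is the sharp $\zeta^{3/2}$ rate in \cref{EL2.2A}: the Hessian produces independent contributions of orders $\zeta^{2}$ and $\zeta/\abs{z}_Q$, and these can be harmonized only by letting the truncation radius $r(\zeta)$ depend on $\zeta$ as $\zeta^{-1/2}$, whereupon $\zeta/\abs{x}\le\zeta^{3/2}$ for $\abs{x}\ge r(\zeta)$ and $\zeta^{2}\le\zeta^{3/2}$ for $\zeta\le 1$ together produce \cref{EL2.2A}.
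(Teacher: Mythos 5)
Your proposal is correct in substance and arrives at the same estimates, but for part \ttup{iii} it follows a genuinely different route than the paper. The paper applies the integral Taylor remainder identity $\mathfrak{J}_\upnu\bigl[\widetilde\Lyap_{Q,\zeta}\bigr](x)=\int_{\RR^n}\int_0^1(1-t)\langle y,\nabla^2\widetilde\Lyap_{Q,\zeta}(x+ty)y\rangle\,\D t\,\upnu(x,\D y)$ to the \emph{entire} jump integral and then splits the $(t,y)$-domain into $A_x=\{|x+ty|_Q\le\tfrac12|x|_Q\}$ and its complement; on $A_x$ the Hessian of $\chi_Q$ is only $O(1)$ (the segment passes near the origin), giving a factor $\zeta+\zeta^2$, but since $|y|\gtrsim|x|$ there, the polynomial and the extra $\zeta^{-1/2}$ are absorbed directly into the gap $e^{(\theta-\zeta\norm{Q}^{1/2})|y|}$; on $A_x^c$ the bound has the $\zeta^2+\zeta/|x|$ form you also derive. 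You instead split at $\sB$ versus $\sB^c$, use the exponential factorization $\mathfrak{d}\widetilde\Lyap_{Q,\zeta}(x;y)=\widetilde\Lyap_{Q,\zeta}(x)\bigl[(e^\alpha-1-\alpha)+(\alpha-\beta)\bigr]$ on the large jumps, and so arrive at the intermediate order $(\zeta^2+\zeta/|x|)$ everywhere. Both proofs then extract $\zeta^{3/2}$ by letting $r(\zeta)$ depend on $\zeta$ and using that $\zeta$ is bounded above (you use $r(\zeta)\asymp\zeta^{-1/2}$, which is even less restrictive than the paper's $1/\zeta$). Two small points deserve attention in your sketch: your claim $\alpha-\beta=O(\zeta|y|^2/|x|)$ does \emph{not} follow from the second-order expansion of $|\cdot|_Q$ alone when the segment $[x,x+y]$ comes near the origin (where $\nabla^2|\cdot|_Q$ blows up); there one must fall back on the trivial bound $|\alpha-\beta|\le 2\zeta\norm{Q}^{1/2}|y|$ and observe that in that regime $|y|\gtrsim|x|$, which recovers the same order—this is precisely the degeneration the paper's $A_x/A_x^c$ split is designed to handle cleanly. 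Similarly, the factorization $\widetilde\Lyap_{Q,\zeta}(x+y)=\widetilde\Lyap_{Q,\zeta}(x)e^\alpha$ assumes $x+y\notin\sB$; for $x+y\in\sB$ (again forcing $|y|\gtrsim|x|$), a separate exponentially small correction is needed. For parts \ttup{i} and \ttup{ii} the paper delegates to a cited lemma, and your three-range decomposition (small/medium/large jumps with the $\epsilon|x|$ threshold) is a valid realization of what that adaptation amounts to. Overall your argument buys a more explicit and self-contained account of where the sharp $\zeta^{3/2}$ rate comes from, at the cost of requiring a couple of extra case distinctions that the paper's single Taylor-domain split absorbs more smoothly.
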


\begin{proof}
	The proof of parts (i) and (ii) follows as a straightforward adaptation of
	\cite[Lemma~~5.1]{Arapostathis-Pang-Sandric-2019}
	by setting
	\begin{align*}
		&C_0(\theta) \,\df\, \sup_{x\in\RR^n}\int_{\RR^n}
		\bigl(\abs{y}^{2}\wedge\abs{y}^{\theta}\bigr)\,\upnu(x,\D{y})\,,\qquad
		\widehat{C}_0(\theta) \,\df\, \sup_{x\in\RR^n}\int_{\sB}\abs{y}^{2}\,\upnu(x,\D{y})\,,
		\intertext{and}
		&\Breve{C}_0(r;\theta) \,\df\, \sup_{x\in\RR^n}\int_{\sB_r^c}\abs{y}^{\theta}
		\upnu(x,\D{y})\,,\qquad r>0\,.
	\end{align*}
	
	To prove part (iii), we use the identity
	\begin{equation}\label{PL2.2A}
	\int_{\RR^n}\mathfrak{d}\widetilde\Lyap_{Q,\zeta}(x;y)\, \upnu(x,\D{y})
	\,=\,\int_{\RR^n}\int_0^1(1-t)
	\bigl\langle y,\nabla^2\widetilde\Lyap_{Q,\zeta}(x+ty)y\bigr\rangle\, \D{t}\,\upnu(x,\D{y})
	\end{equation}
	Consider the set
	\begin{equation*}A_x\,\df\,\bigl\{(t,y)\in[0,1]\times\RR^n\colon\abs{x+ty}_Q
	\le \tfrac{1}{2}\abs{x}_Q\bigr\}\,,\qquad x\in\RR^n\,.\end{equation*}
	On this set we have the bound
	\begin{equation}\label{PL2.2B}
	\babs{\bigl\langle y,\nabla^2\widetilde\Lyap_{Q,\zeta}(x+ty)y\bigr\rangle}
	\,\le\, \bar c (\zeta  + \zeta^2)\abs{y}^2\,
	\E^{\zeta\norm{Q}^{\nicefrac{1}{2}}\abs{ty}}\,
	\widetilde\Lyap_{Q,\zeta}(x)
	\end{equation}
	for some  $\bar c\ge1$.
	Since $\zeta\norm{Q}^{\nicefrac{1}{2}}<\theta$,
	and $\abs{y}_Q\ge\abs{ty}_Q\ge\frac{1}{2}\abs{x}_Q$ on the set $A_x$,
	there exists  $\rho=\rho(\zeta)\ge1$ such that
	\begin{equation}\label{PL2.2C}
	\zeta^{-\nicefrac{1}{2}}(1  + \zeta)\abs{y}^2\,
	\E^{\zeta\norm{Q}^{\nicefrac{1}{2}}\abs{ty}}
	\,\le\, \E^{\theta\abs{y}}\end{equation}
	for all $x\in\sB^c_{2\rho}$ and $(t,y)\in A_x$.
	Hence, using \cref{PL2.2B,PL2.2C} and Fubini's theorem, we have
	\begin{equation}\label{PL2.2D}
	\iint_{A_x}(1-t)
	\bigl\langle y,\nabla^2\widetilde\Lyap_{Q,\zeta}(x+ty)y\bigr\rangle\,
	\D{t}\,\upnu(x,\D{y})
	\,\le\,2\,\bar c\, \zeta^{\nicefrac{3}{2}}
	\biggl(\int_{ \sB_{\rho}^c}\E^{\theta\abs{y}}\,\upnu(x,\D y)\biggr)
	\widetilde\Lyap_{Q,\zeta}(x)
	\end{equation} for all $x\in\sB^c_{2\rho}$.
	Next, since $\abs{x+ty}_Q > \frac{1}{2}\abs{x}_Q$ on the set $A^c_x$,
	we have a bound of the form
	\begin{equation}\label{PL2.2E}
	\bigl\langle y,\nabla^2\widetilde\Lyap_{Q,\zeta}(x+ty)y\bigr\rangle
	\,\le\, \bar c
	\biggl(\zeta^2 + \frac{\zeta}{\abs{x}}\biggr)\abs{y}^2\,
	\E^{\zeta\norm{Q}^{\nicefrac{1}{2}}\abs{y}}\,
	\widetilde\Lyap_{Q,\zeta}(x)
	\end{equation}
	for all $x\in\sB^c$ and $(t,y)\in A_x^c$,
	where, without loss of generality, we use the same constant $\bar c$
	as in \cref{PL2.2B}.
	Since $\theta> 2\zeta\sqrt{\norm{Q}}$, 
	it is clear that there exists $\tilde c>0$, independent of $\zeta$,
	such that
	\begin{equation}\label{PL2.2F}
	\biggl(\zeta^2 + \frac{\zeta}{\abs{x}}\biggr)\abs{y}^2\,
	\E^{\zeta\norm{Q}^{\nicefrac{1}{2}}\abs{y}}
	\,\le\, \tilde c\,\zeta^{\nicefrac{3}{2}}\,\bigl(
	\abs{y}^2\Ind_{\sB}(y)+\E^{\theta\abs{y}}\Ind_{\sB^c}(y)\bigr)
	\qquad\forall(x,y)\in \sB_{1/\zeta}^c\times\RR^n\,.
	\end{equation}
	Thus, by \cref{ET1.3B,PL2.2E,PL2.2F}, there exists 
	$\hat c>0$ such that
	\begin{equation}\label{PL2.2G}
	\iint_{A_x^c}(1-t)
	\bigl\langle y,\nabla^2\widetilde\Lyap_{Q,\zeta}(x+ty)y\bigr\rangle\,
	\D{t}\,\upnu(x,\D{y})
	\,\le\,\hat c\, \zeta^{\nicefrac{3}{2}}\,\widetilde\Lyap_{Q,\zeta}(x)
	\qquad \forall\, x\in\sB^c_{1/\zeta}\,.
	\end{equation}
	The estimate in \cref{EL2.2A} follows from \cref{ET1.3B,PL2.2A,PL2.2D,PL2.2G}.
	This completes the proof.
\end{proof}

We next prove \cref{T1.3}.

%%%%%%%%%%%%%%%%%%%%%%%%%%%%%%%%%%%%%%%%%%%%%%%%%%%%%%%%%%%%%%%%%%%%%%%%%%%%%%%%
\begin{proof}[Proof of \cref{T1.3}]
	In cases \ttup{i} and \ttup{ii}, we take $\Lyap(x)=\Lyap_{Q,\theta}(x)+1$,
	while in case \ttup{iii} we use $\Lyap(x)=\widetilde\Lyap_{Q,\zeta}(x)$
	with $\zeta>0$ sufficiently small.
	Then, in view of \cref{L2.2} it is straightforward to see that there exist
	constants $\Bar c>0$, $\Tilde c>0$, and $r>0$, such that
	\begin{equation*}\mathcal{L}\Lyap(x)\,\le\,\Bar c\,\Ind_{\Bar{\sB}_r}(x)-\Tilde c\,
	\bigl(\Lyap(x)\bigr)^{\nicefrac{(\theta-2+\vartheta)}{\theta}}
	\qquad \forall\,x\in\RR^n\,,\end{equation*}
	in case \ttup{i}, and
	\begin{equation*}\mathcal{L}\Lyap(x)\,\le\,\Bar c\,\Ind_{\Bar{\sB}_r}(x)-\Tilde c\,\Lyap(x)
	\qquad \forall\,x\in\RR^n\,,\end{equation*}
	in cases \ttup{ii} and \ttup{iii}. 
	Observe that the above relations, together with
	\cite[Theorem~2.1]{Meyn-Tweedie-AdvAP-III-1993} and \cref{L2.1},
	imply that $\process{X}$ is conservative.
	Finally, according to \cref{L2.1} and \cite[Theorem~3.4]{Douc-Fort-Guilin-2009}
	the process $\process{X}$ satisfies \cref{ET1.1A} with
	$\phi(t)=t^{\nicefrac{(\theta-2+\vartheta)}{\theta}}$ in case \ttup{i},
	and $\phi(t)=t$ in cases \ttup{ii} and \ttup{iii}
	(for some $b>0$ and closed petite set $C$).
\end{proof}

The proof of \cref{T1.4} is based on the following lemma.

%%%%%%%%%%%%%%%%%%%%%%%%%%%%%%%%%%%%%%%%%%%%%%%%%%%%%%%%%%%%%%%%%%%%%%%%%%%%%%%%
\begin{lem}\label{L2.3}
	Let $\process{X}$ be an It\^{o} process with locally bounded
	coefficients $b(x)$ and $a(x)$
	and satisfying the linear growth condition in \cref{LGC},
	and $\upnu(x,\D y)$ such that $\theta_\upnu>0$.
	Then, for any $\theta\in[0,\theta_\upnu]\cap\Theta_\upnu$,
	there exists a constant $c>0$ such that 
	\begin{equation*}
		\Exp_{x}\bigl[|X(t)|^\theta\bigr] \,\le\,
		\bigl(1+|x|^\theta\bigr)\,\E^{ct} \qquad \forall\,(t,x) \in [0,\infty)\times\RR^n\,.
	\end{equation*}
\end{lem}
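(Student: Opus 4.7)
The plan is to apply the generator $\mathcal{L}$ to the smooth surrogate $V(x) \df (1+\abs{x}^2)^{\nicefrac{\theta}{2}}$ of $\abs{x}^\theta$, establish a linear differential inequality of the form $\mathcal{L}V\le c\, V$, and then conclude by Gronwall's inequality after localization at the exit times $\tau_k\df\inf\{t\ge 0\colon X(t)\in\sB_k^c\}$ introduced before \cref{L2.1}. Note $V\in C^\infty(\RR^n)$ and satisfies $\abs{x}^\theta\le V(x)\le C_\theta(1+\abs{x}^\theta)$, with derivative bounds $\abs{\nabla V(x)}\le\theta(1+\abs{x}^2)^{\nicefrac{(\theta-1)}{2}}$ and $\norm{\nabla^2 V(x)}\le C'_\theta(1+\abs{x}^2)^{\nicefrac{(\theta-2)}{2}}$.

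To bound $\mathcal{L}V$, I first handle the drift and diffusion contributions, for which the linear growth condition \cref{LGC} on $b$ and $a$ directly yields a constant multiple of $V(x)$. For the jump part, I split at $\abs{y}=1$. On $\sB$, a second-order Taylor expansion gives $\babs{\mathfrak{d}_1 V(x;y)}\le\tfrac{1}{2}\abs{y}^2\sup_{s\in[0,1]}\norm{\nabla^2 V(x+sy)}$; using the comparability $(1+\abs{x+sy}^2)^{\nicefrac{(\theta-2)}{2}}\le c_\theta(1+\abs{x}^2)^{\nicefrac{(\theta-2)}{2}}$ for $\abs{y}\le 1$ together with $\sup_x\int_\sB\abs{y}^2\,\upnu(x,\D y)<\infty$ (which is built into $\theta\in\Theta_\upnu$ per \cref{D1.1}), the small-jump integral is controlled by $c_1V(x)$. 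On $\sB^c$ the compensator $\Ind_{\sB}(y)\langle y,\nabla V(x)\rangle$ vanishes, so one estimates $V(x+y)-V(x)$ via the subadditive bound $V(x+y)\le C''_\theta(V(x)+\abs{y}^\theta)$; combined with $\sup_x\int_{\sB^c}\abs{y}^\theta\,\upnu(x,\D y)<\infty$ (again from $\theta\in\Theta_\upnu$), this gives a bound of the form $c_2 V(x)$.

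Granted $\mathcal{L}V(x)\le c\,V(x)$ on $\RR^n$, the same estimates show that $y\mapsto\int_{\sB^c}V(y+z)\,\upnu(y,\D z)$ is locally bounded, so \cref{L2.1} implies that $\{M_V(t)\}_{t\ge 0}$ is a $\Prob_x$-local martingale. Evaluating at $t\wedge\tau_k$, taking expectations, and invoking Fubini yields
\begin{equation*}
\Exp_x\bigl[V(X(t\wedge\tau_k))\bigr]\,\le\, V(x)+c\int_0^t\Exp_x\bigl[V(X(s\wedge\tau_k))\bigr]\,\D s\,.
\end{equation*}
Gronwall's inequality then gives $\Exp_x[V(X(t\wedge\tau_k))]\le V(x)\E^{ct}$, and passing $k\to\infty$ via Fatou's lemma (using the conservativeness of every It\^o process noted in the introduction) together with $\abs{x}^\theta\le V(x)\le C_\theta(1+\abs{x}^\theta)$ yields the claim after mildly enlarging $c$ to absorb the multiplicative constant $C_\theta$.

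The principal obstacle is the uniform-in-$x$ control of the jump integral, where the polynomial growth of $\nabla^2V$ must be balanced against the uniform moment bounds provided by $\theta\in\Theta_\upnu$, especially when $\theta<2$ so that the second-derivative estimate does not blow up; with this step in hand, the remainder is a routine localization-and-Gronwall argument.
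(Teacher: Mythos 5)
Your overall strategy matches the paper's: construct a smooth surrogate of $\abs{x}^\theta$, show $\mathcal{L}$ applied to it grows at most linearly, localize, apply Gronwall, and pass to the limit via Fatou and conservativeness. The estimates on the drift, diffusion, small-jump, and large-jump contributions are all in order, and invoking \cref{L2.1} for the local-martingale property is appropriate since your $V\in C^\infty(\RR^n)$.

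There is, however, a genuine (if small) gap in the final step. Your $V(x)=(1+\abs{x}^2)^{\nicefrac{\theta}{2}}$ satisfies $\abs{x}^\theta\le V(x)$, which is what lets you bound $\Exp_x[\abs{X(t)}^\theta]$ by $\Exp_x[V(X(t))]\le V(x)\,\E^{ct}$. But the reverse comparison $V(x)\le C_\theta(1+\abs{x}^\theta)$ carries $C_\theta>1$ when $\theta>2$ (e.g.\ $C_\theta=2^{\nicefrac{\theta}{2}-1}$), and that factor \emph{cannot} be absorbed by enlarging $c$: you would need $C_\theta\,\E^{ct}\le\E^{c't}$ to hold for all $t\ge0$, which fails at $t=0$. (For $\theta\le2$ you are fine, since $(1+a)^{\nicefrac{\theta}{2}}\le 1+a^{\nicefrac{\theta}{2}}$ gives $V(x)\le 1+\abs{x}^\theta$ with constant one; the problem is exactly the range $\theta>2$, which the lemma permits.)

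The paper sidesteps this by taking the surrogate from \emph{below} rather than from above: a $C^2$ function $\varphi$ with $0\le\varphi(x)\le\abs{x}^\theta$ everywhere and $\varphi(x)=\abs{x}^\theta$ on $\sB^c$. One then shows $\mathcal{L}\varphi\le c(1+\varphi)$, applies Gronwall to $1+\varphi$ to get
\begin{equation*}
\Exp_x\bigl[\varphi\bigl(X(t\wedge\tau_k)\bigr)\bigr]\,\le\,\bigl(1+\varphi(x)\bigr)\,\E^{ct}-1\,,
\end{equation*}
lets $k\to\infty$, and concludes via $\abs{x}^\theta\le\varphi(x)+1$ and $\varphi(x)\le\abs{x}^\theta$:
\begin{equation*}
\Exp_x\bigl[\abs{X(t)}^\theta\bigr]\,\le\,\Exp_x\bigl[\varphi\bigl(X(t)\bigr)\bigr]+1\,\le\,\bigl(1+\varphi(x)\bigr)\,\E^{ct}\,\le\,\bigl(1+\abs{x}^\theta\bigr)\,\E^{ct}\,.
\end{equation*}
No multiplicative constant appears. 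Your estimates on $\mathcal{L}$ transfer verbatim to $\varphi$ (it agrees with $\abs{x}^\theta$ outside $\sB$ and is smooth), so replacing $V$ with such a $\varphi$ and using the add-one-before-Gronwall trick closes the gap; the rest of your argument stands.
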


\begin{proof}
	Let $\varphi\in C^2(\RR^n)$ be such that $\varphi(x)\ge0$ and $\varphi(x)\le|x|^\theta$ for $x\in\RR^n$,
	and $\varphi(x)=|x|^\theta$ for $x\in\sB^c$.
	Further, for $k\in\NN$, let $\varphi_k\in C^2_b(\RR^n)$ be such that
	$\varphi_k(x)\ge0$, $\varphi_k(x)=\varphi|_{\sB_{k+1}}(x)$, and
	$\varphi_k(x)\nearrow \varphi(x)$, as $k\to\infty$, for every $x\in\RR^n$. 
	Then, according to It\^o's formula and the conservativeness of $\process{X}$
	we have 
	\begin{align*}
		\Exp_x\bigl[\varphi_k\bigl(X(t\wedge\tau_k)\bigr)\bigr]&\,=\, \varphi_k(x)
		+\Exp_x\left[\int_0^{t\wedge\tau_k}\mathcal{L}\varphi_k\bigl(X(s)\bigr)\,\D s\right]\\
		&\,\le\,
		\varphi_k(x)+c_k(t\wedge\tau_k)
		+c_k\,\Exp_x\left[\int_0^{t\wedge\tau_k}\varphi_k\bigl(X(s)\bigr)\,\D s\right]\\
		&\,\le\, \varphi_k(x)+c_kt+c_k\int_0^{t}
		\Exp_x\left[\varphi_k\bigl(X(s\wedge\tau_k)\bigr)\right]\,\D s
	\end{align*}
	for all $k\in\NN$, $t\ge0$, and $x\in\RR^n$,
	where the constants $c_k>0$ depend on $\theta$, $b(x)$, $a(x)$, and
	the quantities
	\begin{equation*}
		\sup_{x\in\RR^n}\,\int_{\RR^n}
		\left(|y|^2\Ind_{\sB}(y)+|y|^\theta\Ind_{\sB^c}(y)\right)\upnu(x,\D y)
		\quad \text{and\ }
		\sup_{x\in \sB_r}\,\Bigl(\babs{\varphi_k(x)}+\babs{\nabla\varphi_k(x)}
		+\babs{\nabla^2\varphi_k(x)}\Bigr)\,,
	\end{equation*}
	for $r>0$ large enough.
	Clearly, the functions $\varphi_k(x)$ can be chosen such that
	$c\df \sup_{k\in\NN}c_k<\infty$.
	Now, since the function $t\mapsto\Exp_x\bigl[\varphi_k(X\bigl(t\wedge\tau_k)\bigr)\bigr]$
	is bounded and c\`adl\`ag, Gronwall's lemma implies that
	\begin{equation*}
		\Exp_x\bigl[\varphi_k\bigl(X(t\wedge\tau_k)\bigr)\bigr]\,\le\,
		\bigl(1+\varphi_k(x)\bigr)\, \E^{c t} -1
	\end{equation*}  for all $k\in\NN$, $t\ge0$, and $x\in\RR^n$.
	By letting $k\to\infty$,  Fatou's lemma and the conservativeness of
	$\process{X}$ imply that
	\begin{equation*}
		\Exp_x\bigl[\varphi\bigl(X(t)\bigr)\bigr]\,\le\,
		\bigl(1+\varphi(x)\bigr)\, \E^{c t} -1\qquad \forall\,(t,x) \in [0,\infty)\times\RR^n\,.
	\end{equation*}
	Finally, we have that
\begin{equation*}
		\Exp_x\bigl[|X(t)|^\theta\bigr]\,\le\,\Exp_x\bigl[\varphi\bigl(X(t)\bigr)\bigr]+1
		\,\le\,\bigl(1+\varphi(x)\bigr)\,\E^{c t}
		\,\le\,(1+|x|^\theta)\,\E^{c t}\quad \forall\,(t,x) \in [0,\infty)\times\RR^n\,.
\end{equation*}
	This completes the proof.
\end{proof}

We next prove \cref{T1.4}.

%%%%%%%%%%%%%%%%%%%%%%%%%%%%%%%%%%%%%%%%%%%%%%%%%%%%%%%%%%%%%%%%%%%%%%%%%%%%%%%%
\begin{proof}[Proof of \cref{T1.4}]
	For $p\in[2,\theta_\upnu]\cap\Theta_\upnu$, define
		$\Lyap_{p}(x) \df \abs{x}_Q^{p}$, $x\in\RR^n$,
	and
	\begin{align*}
		\Tilde{\Lg} f(x;z) \,\df\,& \sum^{n}_{i=1} \varDelta_{z}\Tilde b_{i}(x)
		\frac{\partial f}{\partial x_{i}}(z)
		+ \frac{1}{2} \sum^{n}_{i,j=1}\Tilde{a}_{ij}(x;z)
		\frac{\partial^{2} f}{\partial x_{i}\partial x_{j}}(z)\\
		&+\int_{\RR^n}\left(f(z+y)-f(y)
		-\sum_{i=1}^n y_i\frac{\partial f}{\partial x_{i}}(z)\right)
		\varDelta_{z}\upnu(x,\D y)\,,\qquad x,z\in\RR^n\,.
	\end{align*}
	Calculating $\Tilde{\Lg}\Lyap_{p}(x;z)$, using
	\cref{ET1.4A}, we obtain
	\begin{align*}
		\Tilde{\Lg}\Lyap_{p}(x;z)
		&\,=\, \frac{p}{2}\,\abs{z}_Q^{p-2}\left(2\,\bigl\langle \varDelta_{z}
		\Tilde b(x),Qz\bigr\rangle+\trace\,\bigl(\Tilde{a}(x;z) Q\bigr)\right)
		+\frac{p(p-2)}{2}\,\abs{z}_Q^{p-4}\abs{\varDelta_{z}\sigma'(x) Qz}^{2}\\
		&\mspace{20mu} +\int_{\RR^n}\int_0^1(1-t)
		\bigl\langle y,\nabla^2\Lyap_{\varepsilon,p}(z+ty)y\bigr\rangle\,\D t\,
		\varDelta_{z}\upnu(x,\D y)\\
		&\,=\,\frac{p}{2}\,\abs{z}_Q^{p-2}\left(2\,\bigl\langle \varDelta_{z}
		\Tilde b(x),Qz\bigr\rangle+\trace\,\bigl(\Tilde{a}(x;z) Q\bigr)\right)
		+\frac{p(p-2)}{2}\,\abs{z}_Q^{p-4}\abs{\varDelta_{z}\sigma'(x) Qz}^{2}\\
		&\mspace{20mu} +\int_{\RR^n}\int_0^1(1-t)\left(p\, \abs{z+ty}_Q^{p-2}
		\abs{y}^2_Q+p(p-2)\abs{z+ty}_Q^{p-4}\abs{y'Q(z+ty)}^2\right)\D t\,
		\varDelta_{z}\upnu(x,\D y)\\
		&\,\le\, \frac{p}{2}\,\abs{z}_Q^{p-2}\left(2\,\bigl\langle \varDelta_{z}
		\Tilde b(x),Qz\bigr\rangle+\trace\,\bigl(\Tilde{a}(x;z) Q\bigr)+(p-2)\,
		\abs{\sqrt{Q}\,\varDelta_{z}\sigma(x)}^{2}\right)\\
		&\mspace{20mu} +p\bigl(1+(p-2)\norm{Q^{-1}}\bigr)\int_{\RR^n}
		\abs{y}_Q^2\int_0^1(1-t )\abs{z+ty}_Q^{p-2}\,\D t\, \varDelta_{z}\upnu(x,\D y)\\
		&\,\le\, \frac{p}{2}\,\abs{z}_Q^{p-2}\left(2\,\bigl\langle \varDelta_{z}
		\Tilde b(x),Qz\bigr\rangle+\trace\,\bigl(\Tilde{a}(x;z) Q\bigr)+(p-2)\,
		\abs{\sqrt{Q}\,\varDelta_{z}\sigma(x)}^{2}\right)\\
		&\mspace{20mu} +p2^{p-4}\bigl(1+(p-2)\norm{Q^{-1}}\bigr)\abs{z}_Q^{p-2}
		\int_{\RR^n}\abs{y}_Q^2\, \varDelta_{z}\upnu(x,\D y)\\
		&\mspace{20mu} +\frac{2^{p-3}}{p-1}\bigl(1+(p-2)\norm{Q^{-1}}\bigr)
		\int_{\RR^n}\abs{y}_Q^{p}\, \varDelta_{z}\upnu(x,\D y)\\
		&\,=\, \frac{p}{2}\,\abs{z}_Q^{p-2}\Biggl(2\,\bigl\langle \varDelta_{z}
		\Tilde b(x),Qz\bigr\rangle+\trace\,\bigl(\Tilde{a}(x;z) Q\bigr)+(p-2)\,
		\abs{\sqrt{Q}\,\varDelta_{z}\sigma(x)}^{2}\\
		&\mspace{120mu}+2^{p-3}\bigl(1+(p-2)\norm{Q^{-1}}\bigr)
		\int_{\RR^n}\abs{y}_Q^2\, \varDelta_{z}\upnu(x,\D y)\\
		&\mspace{120mu} +\frac{2^{p-2}}{p(p-1)}\bigl(1+(p-2)\norm{Q^{-1}}\bigr)
		\abs{z}_Q^{2-p}\int_{\RR^n}\abs{y}_Q^{p}\, \varDelta_{z}\upnu(x,\D y)\Biggr)
			\end{align*}
		\begin{align*}
		&\,=\,\frac{p}{2}\,\abs{z}_Q^{p-2}\Biggl(2\,\bigl\langle \varDelta_{z}
		\Tilde b(x),Qz\bigr\rangle+\trace\,\bigl(\Tilde{a}(x;z) Q\bigr)+(p-2)\,
		\abs{\sqrt{Q}\,\varDelta_{z}\sigma(x)}^{2}\\
		&\mspace{120mu} +2^{p-3}\bigl(1+(p-2)\norm{Q^{-1}}\bigr)\int_{\RR}
		\abs{k(x+z,v)-k(x,v)}_Q^2\, \upnu(\D v)\\
		&\mspace{120mu} +\frac{2^{p-2}}{p(p-1)}\bigl(1+(p-2)\norm{Q^{-1}}\bigr)
		\abs{z}_Q^{2-p}\int_{\RR}\abs{k(x+z,v)-k(x,v)}_Q^{p}\, \upnu(\D v)\Biggr)\\
		&\,\le\, -c(p)\,\Lyap_{p}(z)
	\end{align*}
for all $x,z\in\RR^n$.
	Next, for $x,z\in\RR^n$, let $\tau \df \inf\{t\ge 0\colon\, X^{x+z}(t) = X^{x}(t)\}$
	(possibly $+\infty$), where $\process{X^x}$ denotes the solution to \cref{SDE} 
	with $X^x(0)=x$ for $x\in\RR^n$.
	By It\^o's formula and the conservativeness of $\process{X}$ we obtain
	\begin{align*}
		\Exp\bigl[\Lyap_{p}\bigl(X^{x+z}(t\wedge\tau\wedge\tau_k)&
		-X^{x}(t\wedge\tau\wedge\tau_k)\bigr)\bigr] - \Lyap_{p}(z)\\
		&\,=\,
		\Exp\biggl[ \int_{0}^{t\wedge\tau\wedge\tau_k}
		\Tilde{\Lg}\Lyap_{p}\bigl(X^{x}(s);X^{x+z}(s)
		- X^{z}(s)\bigr) \,\D{s}\biggr]\\
		&\,=\,
		\Exp\biggl[ \int_{0}^{t}
		\Tilde{\Lg}\Lyap_{p}\bigl(X^{x}(s\wedge\tau\wedge\tau_k);X^{x+z}(s\wedge\tau)
		- X^{z}(s\wedge\tau\wedge\tau_k)\bigr) \,\D{s}\biggr]\\
		&\,=\,
		\int_{0}^{t}
		\Exp\Bigl[
		\Tilde{\Lg}\Lyap_{p}\bigl(X^{x}(s\wedge\tau\wedge\tau_k);X^{x+z}(s\wedge\tau\wedge\tau_k)
		- X^{z}(s\wedge\tau\wedge\tau_k)\bigr)\Bigr] \,\D{s}
	\end{align*} for all $t\ge0$ and $k\in\NN$,
	since, for $t \ge \tau$, $X^{x+z}(t) = X^{x}(t)$ a.s.\ by the pathwise
	uniqueness of the solution to \cref{SDE}.
	From this and \cref{L2.3} we conclude that the function
	$t\mapsto \Exp\bigl[\Lyap_{p}\bigl(X^{x+z}(t\wedge\tau\wedge\tau_k)
	-X^{x}(t\wedge\tau\wedge\tau_k)\bigr)\bigr]$ is differentiable a.e. on $(0,\infty)$.
	Note that $|\Tilde{\Lg}\Lyap_{p}\bigl(x;z\bigr)|\le c|z|^p$
	for some $c>0$ and all $x,z\in\RR^n$,
	We conclude now that
	\begin{align*}
		\frac{\D}{\D{t}}\Exp\bigl[\Lyap_{p}
\bigl(X^{x+z}(t\wedge\tau\wedge\tau_k) &-X^{x}(t\wedge\tau\wedge\tau_k)\bigr)\bigr]\\
		&\,=\,\Exp\bigl[
		\Tilde{\Lg}\Lyap_{p}\bigl(X^{x}(s\wedge\tau\wedge\tau_k);X^{x+z}(t\wedge\tau\wedge\tau_k)
		- X^{z}(t\wedge\tau\wedge\tau_k)\bigr)\bigr]\\
		&\,\le\,-c(p)\,\Exp\bigl[\Lyap_{p}
		\bigl(X^{x+z}(t\wedge\tau\wedge\tau_k)-X^{x}(t\wedge\tau\wedge\tau_k)\bigr)\bigr]
		\qquad \text{a.e. on}\ (0,\infty)
	\end{align*} for all $k\in\NN$.
	Thus by Gronwall's lemma, it follows that
\begin{equation*}
\Exp \bigl[\Lyap_{p}\bigl(X^{x+z}(t\wedge\tau_k) - X^{x}(t\wedge\tau_k)\bigr)\bigr]
\,=\,\Exp\bigl[\Lyap_{p}
\bigl(X^{x+z}(t\wedge\tau\wedge\tau_k)-X^{x}(t\wedge\tau\wedge\tau_k)\bigr)\bigr]
\,\le\,
\Lyap_{p}(z)\,\E^{-c(p) t}\,,
\end{equation*}
	and Fatou's lemma implies that
\begin{equation*}
\Exp \bigl[\Lyap_{p}\bigl(X^{x+z}(t) - X^{x}(t)\bigr)\bigr]
	\,\le\,
	\Lyap_{p}(z)\,\E^{-c(p) t}
\end{equation*}
 for all $t\ge0$ and $x,z\in\RR^n$.
	Next, from the bound
	$\underline\lambda_Q\abs{z}^{2}\le \abs{z}^2_Q\le \Bar\lambda_Q\abs{z}^{2}$ we obtain
	\begin{align*}
		\Exp\bigl[|X^{x+z}(t) - X^{x}(t)|^p\bigr] &\,\le\,
		\bigl(\underline\lambda_Q)^{-\nicefrac{p}{2}}\,
		\Exp\bigl[\abs{X^{x+z}(t) - X^{x}(t)}^p_Q\bigr]\\
		&\,\le\, \bigl(\underline\lambda_Q)^{-\nicefrac{p}{2}}\,
		\bigl(\overline\lambda_Q)^{\nicefrac{p}{2}}\,
		|z|^{p}\,\E^{{-c(p)t}}
	\end{align*} for all $t\ge0$ and $x,z\in\RR^n$,
	thus establishing \cref{ET1.4B}.
	
	Finally, in order to establish \cref{ET1.4C},
	we follow the idea from \cite[Proof of Corollary~1.8]{Luo-16} or
	\cite[Proof of Theorem~2.1]{Komorowski-Walczuk-2012}.
	Observe first that, according to \cref{L2.3},
	for any $\upmu\in\cP_p(\RR^{n})$, $\upmu P_t\in\cP_p(\RR^{n})$ for all $t\geq0$.
	Next, let $\upmu_1,\upmu_2\in\cP_p(\RR^n)$ be arbitrary.
	According to \cref{ET1.4B}, we have
	\begin{equation*}
		\sW_p(\upmu_1 P_t,\upmu_2 P_t) \,\le\,
		\left(\frac{\overline{\lambda}_Q}{\underline{\lambda}_Q}\right)^{\nicefrac{1}{2}}
		\sW_p(\upmu_1,\upmu_2)\E^{-\frac{c(p)t}{p}}\qquad \forall\,t\geq0\,.
	\end{equation*}
	Fix $t_0\geq0$ such that
	\begin{equation*}
		\left(\frac{\overline{\lambda}_Q}{\underline{\lambda}_Q}\right)^{\nicefrac{1}{2}}
		\E^{-\frac{c(p)t_0}{p}}<1\,.
	\end{equation*}
	Then, the mapping $\upmu\mapsto\upmu P_{t_0}$
	is a contraction on $\cP_p(\RR^{n})$.
	Thus, since $(\cP_p(\RR^{n}),\sW_p)$ is a complete metric space, the Banach fixed
	point theorem entails that there exists a unique $\uppi_{0}\in\cP_p(\RR^{n})$
	such that $\uppi_{0}P_{t_0}(\D x)=\uppi_{0}(\D x)$.
	By defining $\uppi(\D x)\df t_0^{-1}\int_0^{t_0}\uppi_{0}P_s(\D x)\,\D s$, we can
	easily see that $\uppi P_{t}(\D x)=\uppi(\D x)$ for all $t\geq0$, i.e. $\uppi(\D x)$
	is an invariant probability measure for $\process{X}$.
	By employing \cref{L2.1} again, we also see that
	$\uppi\in\mathcal{P}_p(\RR^{n})$.
	Finally, for any $\upmu\in\mathcal{P}_p(\RR^{n})$ we have 
	\begin{equation*}
		\sW_p(\upmu P_t,\uppi) \,=\, \sW_p(\upmu P_t,\uppi P_t) \,\le\,
		\left(\frac{\overline{\lambda}_Q}{\underline{\lambda}_Q}\right)^{\nicefrac{1}{2}}
		\sW_p(\upmu,\uppi)\E^{-\frac{c(p)t}{p}}\qquad \forall\,t\geq 0\,,
	\end{equation*}
	which also proves uniqueness of $\uppi(\D x)$.

	To prove the second assertion, we adapt the proof of \cite[Lemma~7.3.4]{book},
	where an analogous result is shown for $p=1$.
	Define
	\begin{equation*}
		V_{\varepsilon,p}(x) \,\df\, \frac{\abs{x}_Q^{p+1}}
		{\bigl(\varepsilon +\abs{x}^2_Q\bigr)^{\nicefrac{1}{2}}}\,,
		\qquad \varepsilon>0\,,\quad x\in\RR^n\,,
	\end{equation*}
	and observe that in this case $\Tilde{\Lg}$ reduces to
	\begin{equation*}
		\Tilde{\Lg} f(x;z) \,=\, \sum^{d}_{i=1} \varDelta_{z}b^{i}(x)
		\frac{\partial f}{\partial x_{i}}(z)\qquad \forall\, x,z\in\RR^n\,.
	\end{equation*}
	Calculating $\Tilde{\Lg}V_{\varepsilon,p}(x;z)$, using
	\cref{ET1.4A}, we obtain
	\begin{align*}
		\Tilde{\Lg}\Lyap_{\varepsilon,p}(x;z)
		&\,=\, 
		\frac{\bigl(\varepsilon(p+1) + p\abs{z}^2_Q\bigr)\,\abs{z}_Q^{p-1}}
		{\bigl(\varepsilon + \abs{z}^2_Q\bigr)^{\nicefrac{3}{2}}}
		\, \bigl\langle Qz,\varDelta_{z}b(x)\bigr\rangle\\
		&\,\le\, -c(p)\, \frac{\varepsilon\frac{p+1}{p} + \abs{z}^2_Q}
		{\varepsilon + \abs{z}^2_Q}\,\Lyap_{\varepsilon,p}(z)
		\\
		&\,\le\, -c(p)\, \Lyap_{\varepsilon,p}(z)\qquad \forall\,x,z\in\RR^n\,.
	\end{align*} As before, 
	by It\^o's formula and the conservativeness of $\process{X}$, combined with the fact
	that the L\'evy noise does not depend on the state, we obtain
	\begin{align*}
		\Exp\bigl[\Lyap_{\varepsilon,p}\bigl(X^{x+z}(t\wedge\tau\wedge\tau_k)
		&-X^{x}(t\wedge\tau\wedge\tau_k)\bigr)\bigr] - \Lyap_{\varepsilon,p}(z)\\
		&\,=\,
		\int_{0}^{t} \Exp\Bigl[ \Tilde{\Lg}\Lyap_{\varepsilon,p}
		\bigl(X^{x}(s\wedge\tau\wedge\tau_k);X^{x+z}(s\wedge\tau\wedge\tau_k)
		- X^{z}(s\wedge\tau\wedge\tau_k)\bigr)\Bigr] \,\D{s}
	\end{align*}
	for all $t\ge0$ and $k\in\NN$,
	and 
	\begin{align*}
		\frac{\D}{\D{t}}\Exp\bigl[\Lyap_{\varepsilon,p}
\bigl(X^{x+z}(t\wedge\tau\wedge\tau_k)&-X^{x}(t\wedge\tau\wedge\tau_k)\bigr)\bigr]\\
		&\,=\,\Exp\bigl[
		\Tilde{\Lg}\Lyap_{\varepsilon,p}
		\bigl(X^{x}(t\wedge\tau\wedge\tau_k);X^{x+z}(t\wedge\tau\wedge\tau_k)
		- X^{z}(t\wedge\tau\wedge\tau_k)\bigr)\bigr]\\
		&\,\le\,-c(p)\,\Exp\bigl[\Lyap_{\varepsilon,p}
		\bigl(X^{x+z}(t\wedge\tau\wedge\tau_k)-X^{x}(t\wedge\tau\wedge\tau_k)\bigr)\bigr]
		\qquad \text{a.e. on} \ (0,\infty)
	\end{align*} for all $k\in\NN$.
	Thus by Gronwall's and Fatou's lemmas it follows that
	\begin{equation*}
		\Exp \bigl[\Lyap_{\varepsilon,p}\bigl(X^{x+z}(t) - X^{x}(t)\bigr)\bigr]
		\,\le\,
		\Lyap_{\varepsilon,p}(z)\,\E^{-c_p t}\end{equation*} for all $t\ge0$ and $x,z\in\RR^n$.
	Taking limits as $\varepsilon\to0$, and using
	monotone convergence, the assertion follows.
\end{proof}

In what follows we give an alternative proof of \cref{T1.4} in the case when
$\sigma(x)\equiv\sigma$ and $\upnu(x,\D y)\equiv\upnu(\D y)$.
Let $\Bar X(t)\df Q^{\nicefrac{1}{2}}X(t)$ for $t\ge0$. Clearly, $\process{\Bar X}$
is again an It\^{o} process which satisfies \begin{equation*}
	\Bar X(t) \,=\,
	x+Q^{\nicefrac{1}{2}}\int_0^tb\bigl(Q^{-\nicefrac{1}{2}}\Bar X(s)\bigr)\,\D s
	+Q^{\nicefrac{1}{2}}\sigma\,B(t)
	+Q^{\nicefrac{1}{2}} L(t)\qquad \forall\, t\ge0\,,
\end{equation*}
where $\process{L}$ is an $n$-dimensional pure-jump and zero-drift L\'evy process
determined by $\upnu(\D y)$.
The corresponding transition probability satisfies
\begin{align*}
	\Bar p(t,x,\D y)&\,=\, \Bar\Prob^x(\Bar X(t)\in\D y)\\
	&\,=\,\Prob^{Q^{-\nicefrac{1}{2}}x}(X(t)\in Q^{-\nicefrac{1}{2}}\D y)
	\,=\,p(t,Q^{-\nicefrac{1}{2}}x,Q^{-\nicefrac{1}{2}}\D y)
	\quad \forall\, (t,x)\in[0,\infty)\times\RR^n\,.
\end{align*}
Thus, we have
\begin{equation}
\begin{aligned}\label{EPT1.4A}
\langle \varDelta_z\Bar b(x),Qz\rangle &\,=\,
\langle Q^{\nicefrac{1}{2}}\varDelta_{Q^{-\nicefrac{1}{2}}z}
b(Q^{-\nicefrac{1}{2}}x),Qz\rangle\\\
&\,=\, \langle \varDelta_{Q^{-\nicefrac{1}{2}}z}
b(Q^{-\nicefrac{1}{2}}x),Q^{\nicefrac{1}{2}}z\rangle \\
&\,\le\, -\frac{c(p)}{p}\abs{Q^{-\nicefrac{1}{2}}z}_Q
\,=\, -\frac{c(p)}{p}\abs{z}_Q\qquad \forall\,x,z\in\RR^n\,.
\end{aligned}
\end{equation}
Now, in \cite{Bolley-Gentil-Guillin-2012} it has been shown that 
\cref{EPT1.4A} implies that
\begin{equation*}\sW_p(\updelta_x\Bar P_t,\updelta_y \Bar P_t) \,\le\,
	|x-y|\,\E^{-\frac{c(p)t}{p}}\end{equation*}
for all $t\ge0$ and $x,y\in\RR^n$.
Finally we get
\begin{equation*}
\begin{aligned}
\sW_p(\updelta_x P_t,\updelta_y P_t)& \,=\,
\sW_p\bigl(\Bar p(t,Q^{\nicefrac{1}{2}}x,Q^{\nicefrac{1}{2}}\D z),
\Bar p(t,Q^{\nicefrac{1}{2}}y,Q^{\nicefrac{1}{2}}\D z)\bigr)\\
&\,\le\, \bigl(\underline{\lambda}_Q\bigr)^{-\nicefrac{1}{2}}|
Q^{\nicefrac{1}{2}}(x-y)|\,\E^{-\frac{c(p)t}{p}}\\
&\,\le\, 
\left(\frac{\overline{\lambda}_Q}{\underline{\lambda}_Q}\right)^{\nicefrac{1}{2}}|x-y|\,
\E^{-\frac{c(p)t}{p}}
\end{aligned}
\end{equation*}
for all $t\ge0$ and $x,y\in\RR^n$,
which is \cref{ET1.4B}.

Lastly, we prove \cref{P1.1}.

%%%%%%%%%%%%%%%%%%%%%%%%%%%%%%%%%%%%%%%%%%%%%%%%%%%%%%%%%%%%%%%%%%%%%%%%%%%%%%%%
\begin{proof}[Proof of \cref{P1.1}]
	According to \cite[Theorem~4.1]{Villani-Book-2009},
	for each $s\in[0,\infty)$ there exists $\Pi_s\in\mathcal{C}(\updelta_x P_s,\uppi)$
	such that $\sW_{p}(\updelta_x P^\psi_s,\uppi)=
	\int_{\X\times \X}\mathsf{d}(y,z)\,\Pi_s(\D y,\D z)$.
	Now, we have that
	\begin{align*}
		\sW^p_{p}(\updelta_xP_t^\psi,\uppi)&\,=\,
		\inf_{\Pi\in\mathcal{C}(\updelta_xP_t^\psi,\uppi)}\,
		\int_{\X\times\X}\bigl(\mathsf{d}(y,z)\bigr)^p\,\Pi(\D y,\D z)\\
		&\,\le\,
		\int_{\X\times\X}\bigl(\mathsf{d}(y,z)\bigr)^p
		\int_{[0,\infty)}\Pi_s(\D y,\D z)\,\upmu_t(\D s)\\
		&\,\le
		\,\int_{[0,\infty)} \sW^p_{p}(\updelta_xP_s,\uppi)\,\upmu_t(\D s)\\
		&\,\le\, \bigl(c(x)\bigr)^p\int_{[0,\infty)} \bigl(r^p(s)\bigr)^p\,\upmu_t(\D s)
		\,=\,\bigl(c(x)\bigr)^p\,\mathbb{E}\bigl[\bigl(r(S(t))\bigr)^p\bigr]\qquad
		\forall\, (t,x)\in[0,\infty)\times\X\,,
	\end{align*}
	which completes the proof.
\end{proof}

%%%%%%%%%%%%%%%%%%%%%%%%%%%%%%%%%%%%%%%%%%%%%%%%%%%%%%%%%%%%%%%%%%%%%%%%%%%%%%%%
\section{Examples}\label{S3}

In this section, we consider applications of the main results to several classes
of Markov processes, including Langevin tempered diffusion processes, 
Ornstein-Uhlenbeck processes with jumps, piecewise Ornstein-Uhlenbeck processes
with jumps under constant and stationary Markov controls, state-space models
and backward recurrence time chains.
Further examples can be found in
\cite{Douc-Fort-Guilin-2009,DFMS-04,Fort-03,Fort-Roberts-2005,TT-94}.

%%%%%%%%%%%%%%%%%%%%%%%%%%%%%%%%%%%%%%%%%%%%%%%%%%%%%%%%%%%%%%%%%%%%%%%%%%%%%%%%
\subsection{Langevin tempered diffusion processes}
We first consider a class of Langevin tempered diffusion processes.
Let $\alpha\in(0,1/n)$, and let $\pi\in C^2(\RR^n)$ be strictly positive,
$\pi(x)=c\,\abs{x}^{-\nicefrac{1}{\alpha}}$ for some $c>0$ and all $x\in\sB^c$,
and $\int_{\RR^n}\pi(x)\,\D x=1$.
Further, for $\beta\in[0,(1+\alpha(2-n))/2]$ and $x\in\RR^n$,  let
\begin{equation*}
	\sigma(x)\,\df\,\bigl(\pi(x)\bigr)^{-\beta}\,\Id_n\,,\qquad
	a(x)\,\df\, \sigma(x)\sigma(x)'=\bigl(\pi(x)\bigr)^{-2\beta}\,\Id_n\,,
\end{equation*}
and
\begin{equation*}
b(x)\,\df\,\frac{1}{2}\,\bigl(a(x)\,\nabla\log(\pi(x))+\nabla\cdot a(x)'\bigr)
\,=\,\frac{1-2\beta}{2}\,\bigl(\pi(x)\bigr)^{-2\beta}\nabla\log(\pi(x))\,.
\end{equation*}
Then, in \cite[Proposition 15]{Fort-Roberts-2005} it has been shown that the SDE
\begin{equation*}%\label{LANGEVIN}
	X(t)\,=\,x+\int_0^tb\bigl(X(s)\bigr)\,\D s+\int_0^t\sigma\bigl(X(s)\bigr)\, \D B_s\,,
	\qquad t\ge0\,,\quad x\in\RR^n\,,
\end{equation*} 
admits a weak solution
$(\Omega,\mathcal{F},\{\mathcal{F}_t\}_{t\ge0},\process{B},\process{X},\Prob)$,
which is a conservative strong Markov process with continuous sample paths.
Moreover, it is irreducible, aperiodic, every compact set is petite, and
$\uppi(\D x)\df\pi(x)\D x$ is its unique invariant probability measure.
Here, $(\Omega,\mathcal{F}, \{\mathcal{F}_t\}_{t\ge0},\process{B},\Prob)$ is a
standard $n$-dimensional Brownian motion.
Note also that according to It\^{o}'s formula $\process{X}$ satisfies
\hyperlink{MP}{\ttup{MP}} with
\begin{equation*}\mathcal{L}f(x)\,=\, \bigl\langle b(x),\nabla f(x)\bigr\rangle
+\frac{1}{2}\trace\bigl(a(x)\nabla^2f(x)\bigr)\,,\qquad x\in\RR^n\,.\end{equation*}

%%%%%%%%%%%%%%%%%%%%%%%%%%%%%%%%%%%%%%%%%%%%%%%%%%%%%%%%%%%%%%%%%%%%%%%%%%%%%%%%
\begin{proposition}\label{P3.1}
	\begin{enumerate}
		\item[\ttup i]
		If $\beta\in\bigl[\alpha,\frac{1}{2}(1+\alpha(1-n))\bigr)$,
		then the assertions of \cref{T1.1}\,\ttup{iii}
		hold with $\Lyap(x)= 1+\Lyap_{\Id_n,\frac{\gamma}{\alpha}}(x)$ and
		$\eta= \frac{\gamma}{\alpha}$ for any $\gamma\in[\alpha,1+\alpha(2-n)-2\beta)$.
		
		\item[\ttup{ii}]
		If $\alpha\in(0,1/(n+1))$ and $\beta\in[0,\alpha)$, then the assertions of
		\Cref{T1.1}\,\ttup{i} and \ttup{ii} hold with
		\begin{equation*}
			\Lyap(x)= 1+\Lyap_{\Id_n,\frac{\gamma}{\alpha}}(x)\,,\quad
			\phi(t)= t^{1-\frac{2(\alpha-\beta)}{\gamma}}\,,\quad\text{and\ \ }
			\eta= \alpha^{-1}\bigl(\gamma-2(\alpha-\beta)\bigr)
		\end{equation*}
		for any
		$\gamma\in[3\alpha-2\beta,1+\alpha(2-n)-2\beta)$.
		
		\item[\ttup{iii}]
		Under the assumptions of \ttup{ii},
		$\uppi\in\cP_{\alpha^{-1}(1-\alpha n)-\iota}(\RR^n)$ for
		$\iota\in\bigl(0,\alpha^{-1}(1-\alpha(n+1))\bigr)$.
		Let $\rho\in(0,(1-(n+1)\alpha)\wedge2(\alpha-\beta))$
		and $\varepsilon\in\bigl[\alpha^{-1}\rho,2\alpha^{-1}(\alpha-\beta)\bigr)$
		be fixed.
		Then, for every $p\in\bigl[1,\alpha^{-1}(1-n\alpha-\rho)\bigr]$ and
		$\iota\in(0,2\alpha^{-1}(\alpha-\beta)-\varepsilon)$ there exist
		a positive constant $\Bar c$ and a
		diverging increasing sequence $\{t_n\}_{n\NN}\subset[0,\infty)$,
		depending on the above parameters, such that \cref{ET1.2A} in \cref{T1.2}
		holds with $\Lyap(x)$ as above, $\theta= \alpha^{-1}(1+\alpha(2-n)-2\beta-\rho)$,
		and $\vartheta= \alpha^{-1}(1-n\alpha-\rho)$.
	\end{enumerate}
\end{proposition}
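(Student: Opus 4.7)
The plan is to verify the Foster--Lyapunov drift condition \cref{ET1.1A} with $\Lyap(x)=1+\Lyap_{\Id_n,\gamma/\alpha}(x)$ and then apply \cref{T1.1} in cases \ttup{i}--\ttup{ii} and \cref{T1.2} in case \ttup{iii}. Since $\pi(x)=c\abs{x}^{-1/\alpha}$ on $\sB^c$, on this set
\begin{equation*}
a(x)\,=\,c^{-2\beta}\abs{x}^{2\beta/\alpha}\Id_n\,,\qquad b(x)\,=\,-\frac{1-2\beta}{2\alpha}\,c^{-2\beta}\abs{x}^{2\beta/\alpha-2}\,x\,,
\end{equation*}
and $\Lyap(x)=1+\abs{x}^{\gamma/\alpha}$. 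A direct computation of $\nabla\Lyap$ and $\nabla^2\Lyap$ yields
\begin{equation*}
\mathcal{L}\Lyap(x)\,=\,\frac{\gamma\,c^{-2\beta}}{2\alpha^2}\,\bigl[\gamma-\bigl(1+\alpha(2-n)-2\beta\bigr)\bigr]\,\abs{x}^{(\gamma+2\beta-2\alpha)/\alpha}\,,\qquad x\in\sB^c\,,
\end{equation*}
whose prefactor is negative exactly when $\gamma<1+\alpha(2-n)-2\beta$.

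In case \ttup{i}, $\beta\ge\alpha$ makes the power $(\gamma+2\beta-2\alpha)/\alpha\ge\gamma/\alpha$, so after absorbing the behaviour on a closed ball $\overline\sB_r$ (large enough) into an additive constant, $\mathcal{L}\Lyap(x)\le b\,\Ind_{\overline\sB_r}(x)-\hat c\,\Lyap(x)$ for some $b,\hat c>0$. Since $\process{X}$ has no jumps, \cref{L2.1} (or It\^o's formula directly) upgrades this pointwise estimate to \cref{ET1.1A} with $\phi(t)=\hat c\,t$; the remaining hypotheses of \cref{T1.1}\,\ttup{iii} are immediate, as $\overline\sB_r$ is petite by assumption, $\sup_{x\in\overline\sB_r}\Lyap(x)<\infty$, and \cref{ET1.1B} holds with $\eta=\gamma/\alpha\ge1$. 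In case \ttup{ii}, $\beta<\alpha$ gives power strictly less than $\gamma/\alpha$, and writing it as $(\gamma/\alpha)(1-2(\alpha-\beta)/\gamma)$ produces $\mathcal{L}\Lyap(x)\le b\,\Ind_{\overline\sB_r}(x)-\hat c\,\phi\circ\Lyap(x)$ with $\phi(t)=t^{1-2(\alpha-\beta)/\gamma}$; this $\phi$ is nondecreasing, concave, differentiable with $\phi'(t)\to0$, while \cref{ET1.1B} holds with $\eta=\alpha^{-1}(\gamma-2(\alpha-\beta))$, and $\eta\ge1$ forces $\gamma\ge3\alpha-2\beta$. Nonemptiness of $[3\alpha-2\beta,\,1+\alpha(2-n)-2\beta)$ is precisely $\alpha<1/(n+1)$. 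The claims of \ttup{i}--\ttup{ii} then follow from the corresponding parts of \cref{T1.1}.

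For case \ttup{iii}, I would take $L$ to be a nonnegative globally Lipschitz function agreeing with $\abs{x}$ on $\sB^c$ (for instance $L(x)=\sqrt{1+\abs{x}^2}-1$), and set $\gamma\df1+\alpha(2-n)-2\beta-\rho$. Since $\rho<2(\alpha-\beta)$ this $\gamma$ lies in the admissible range of case \ttup{ii}, so the drift estimate above applies. The bounds $\Lyap(x)\ge \kappa\,\bigl(L(x)\bigr)^{\theta}$ and $\phi\circ\Lyap(x)\ge \kappa\,\bigl(L(x)\bigr)^{\vartheta}$ hold with $\theta=\alpha^{-1}(1+\alpha(2-n)-2\beta-\rho)$ and $\vartheta=\alpha^{-1}(1-n\alpha-\rho)$ for some $\kappa>0$; the constraint $\rho<1-(n+1)\alpha$ secures $\vartheta\ge1$. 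Using the tail $\pi(x)\asymp\abs{x}^{-1/\alpha}$, one checks $\int_{\RR^n}\bigl(L(x)\bigr)^{\vartheta+\varepsilon}\,\uppi(\D x)=\infty$ iff $\vartheta+\varepsilon\ge1/\alpha-n$, i.e.\ $\varepsilon\ge\rho/\alpha$, so $\varepsilon\in[\rho/\alpha,\,2\alpha^{-1}(\alpha-\beta))$ is a nonempty subinterval of $(0,\theta-\vartheta)$. The same tail integral gives $\uppi\in\cP_{\alpha^{-1}(1-\alpha n)-\epsilon}(\RR^n)$ for $\epsilon\in(0,\alpha^{-1}(1-\alpha(n+1)))$. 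Invoking \cref{T1.2} with these parameters produces \cref{ET1.2A}. The main obstacle I anticipate is the careful bookkeeping of the numerous exponents ($\gamma,\theta,\vartheta,\varepsilon,\rho,p,\epsilon$) to ensure the ranges in the statement translate exactly into the abstract hypotheses, and making sure that the smooth global extensions of $\Lyap_{\Id_n,\gamma/\alpha}$ and $L$ beyond $\sB^c$ do not spoil the leading-order estimates used on a large enough ball.
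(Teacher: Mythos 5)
Your proposal reaches the correct conclusion and follows the same overall structure as the paper: verify the Foster--Lyapunov drift \cref{ET1.1A} for the given $\Lyap$, then invoke \cref{T1.1} (parts \ttup{iii} resp.\ \ttup{i}--\ttup{ii}) and, for part \ttup{iii}, \cref{T1.2}. The difference is that you verify the drift inequality by a direct computation of $\mathcal{L}\Lyap$ on $\sB^c$ (and your formula is correct: indeed $\mathcal{L}\Lyap(x)=\tfrac{\gamma c^{-2\beta}}{2\alpha^2}[\gamma-(1+\alpha(2-n)-2\beta)]\abs{x}^{(\gamma+2\beta-2\alpha)/\alpha}$), whereas the paper outsources precisely this computation to \cite[Theorem~16\,\ttup{i}--\ttup{ii}]{Fort-Roberts-2005}, which asserts the drift condition \cref{ET1.1A} directly for the relevant $\Lyap$, $\phi$ and a petite ball $C$. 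Your route is more self-contained and makes the exponent bookkeeping transparent, at the cost of some extra work; the paper's is shorter but assumes familiarity with \cite{Fort-Roberts-2005}. Two small points worth tightening: (1) the clause ``\cref{L2.1} upgrades this pointwise estimate to \cref{ET1.1A}'' compresses a genuine step --- \cref{L2.1} only yields that $M_\Lyap$ is a local martingale, and one still needs a passage from the infinitesimal drift inequality to the integrated form \cref{ET1.1A} (the paper leans on \cite[Theorem~16]{Fort-Roberts-2005} for this, and in \cref{T1.3} on \cite[Theorem~3.4]{Douc-Fort-Guilin-2009}); (2) for part \ttup{iii} the choice $L(x)=\sqrt{1+\abs{x}^2}-1$ does not literally agree with $\abs{x}$ on $\sB^c$ (only asymptotically), and in any case $L(x)=\abs{x}$ is already globally Lipschitz and is what the paper uses, so the elaborate smooth surrogate is unnecessary. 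Your tail computations ($\uppi\in\cP_{\alpha^{-1}(1-\alpha n)-\epsilon}$, divergence of $\int\abs{x}^{\vartheta+\varepsilon}\,\uppi(\D x)$ for $\varepsilon\ge\rho/\alpha$, and the nonemptiness of the various parameter ranges) are all correct and match the paper's.
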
 

\begin{proof}
	\begin{itemize}
		\item [\ttup i]
		In \cite[Theorem~16\,\ttup i]{Fort-Roberts-2005} it has been shown that for
		$\beta\in\bigl[\alpha,\frac{1}{2}(1+\alpha(1-n))\bigr)$
		and $\gamma\in(0,1+\alpha(2-n)-2\beta)$
		the Foster-Lyapunov condition in \cref{ET1.1A} holds with $\Lyap(x)$ as above,
		$\phi(t)=t$ and $C=\Bar{\sB}_r$ for some $r>0$ large enough. Also, the relation in \cref{ET1.1B} easily follows from the form of $\Lyap(x)$ and
		$\phi(t)$, and the choice of $\eta$.

		\item [\ttup {ii}]
		In \cite[Theorem~16\,\ttup{ii}]{Fort-Roberts-2005} it has been shown that for
		$\alpha\in(0,1/n)$, $\beta\in[0,\alpha)$ and
		$\gamma\in(2(\alpha-\beta),1+\alpha(2-n)-2\beta)$, the Foster-Lyapunov condition
		in \cref{ET1.1A} holds with $\Lyap(x)$ and $\phi(t)$ as above and $C=\Bar{\sB}_r$
		for some $r>0$ large enough. The relation in \cref{ET1.1B} can again be easily verified due to the form of $\Lyap(x)$ and
		$\phi(t)$, and the choice of $\eta$.
		
		\item[\ttup{iii}]
		Since $\vartheta+\varepsilon-\nicefrac{1}{\alpha}+n-1\ge-1$, we have
$\int_{ \RR^n}\abs{x}^{\vartheta+\varepsilon}\,\uppi(\D x)\,=\,\infty$.
		The assertion now follows from \cref{T1.2} by taking $L(x)=\abs{x}$.
	\end{itemize}
	This completes the proof.
\end{proof}

%%%%%%%%%%%%%%%%%%%%%%%%%%%%%%%%%%%%%%%%%%%%%%%%%%%%%%%%%%%%%%%%%%%%%%%%%%%%%%%%
\begin{remark}
	Observe that the rates obtained in \cref{P3.1}\,\ttup {ii} and \ttup{iii} match.
	Also, in \cref{P3.1}\,\ttup {ii} we assume that $\alpha\in\bigl(0,(n+1)^{-1}\bigr)$.
	Namely, for $\alpha\in\bigl[(n+1)^{-1},n^{-1}\bigr)$ it holds that
	$\int_{\RR^n}\abs{x}\,\uppi(\D x)=\infty$, and hence convergence in the
	$\sW_{p}$-distance cannot hold.
	On the other hand, in this case, \cite[Theorem~16\,\ttup{ii}]{Fort-Roberts-2005}
	shows subexponential convergence in the $f$-norm.
	In the following subsections we give examples of Markov processes which are
	ergodic in the
	$\sW_p$-distance but not in the $f$-norm.
	For additional results
	on ergodic properties of Langevin tempered diffusion processes with respect
	to the $f$-norm see \cite{Douc-Fort-Guilin-2009} and \cite{Fort-Roberts-2005}.
\end{remark}

%%%%%%%%%%%%%%%%%%%%%%%%%%%%%%%%%%%%%%%%%%%%%%%%%%%%%%%%%%%%%%%%%%%%%%%%%%%%%%%%
\subsection{Ornstein-Uhlenbeck processes with jumps}

We next consider a class of It\^{o} processes with linear drift.
Let $H$ be an $n\times n$ matrix, and let $\process{L}$ be an $n$-dimensional L\'evy
process determined by L\'evy triplet $\bigl(b_L,a_L,\upnu_L(\D y)\bigr)$.
It is well known that the SDE
\begin{equation*}X(t)\,=\,x+H\int_0^tX(s)\,\D s+ L(t)\,,\qquad t\ge0\,,\quad x\in\RR^n\,,\end{equation*}
admits a unique conservative strong solution $\process{X}$ which is a strong Markov
process with c\`{a}dl\`{a}g sample paths (see e.g. \cite[Theorem 3.1 and Proposition 4.2]{Albeverio-Brzezniak-Wu-2010}).
In particular, $\process{X}$ is an It\^{o} process satisfying \hyperlink{MP}{\ttup{MP}}
with $b(x)=b_L+Mx$, $a(x)=a_L$, and $\upnu(x,\D y)=\upnu_L(\D y)$.
This process is known as an Ornstein-Uhlenbeck process with jumps.
In the case when $\process{L}$ is a standard Brownian motion,
$\process{X}$ is the classical Ornstein-Uhlenbeck process. 
If $H$ is a Hurwitz matrix (a square matrix whose eigenvalues have all strictly
negative real parts), it has been shown in \cite[Theorems 4.1 and 4.2]{Sato-Yamazato-1984}
that $\process{X}$ admits a unique invariant $\uppi\in\cP(\RR^n)$ if, and only if,
\begin{equation*}\int_{\sB^c}\log(|y|)\,\upnu_L(\D y)\,<\,\infty\,.\end{equation*}
Moreover, if this is the case, then
$\lim_{t\to\infty}\updelta_xP_t\bigl(f\bigr)\,=\,\uppi\bigl(f\bigr)$
for all $x\in\RR^n$ and $f\in C_b(\RR^n)$,
i.e. for any $x\in\RR^n$, the transition kernel $p(t,x,\D y)$ converges weakly,
as $t\to\infty$, to $\uppi(\D y)$.
However, this is not enough for $\sW_{p}$-convergence of $p(t,x,\D y)$ to
$\uppi(\D y)$ (see \cite[Theorem~6.9]{Villani-Book-2009}).
Assume additionally that $1\in\Theta_\upnu$, and let
$p\in[1,\theta_\upnu]\cap\Theta_\upnu$.
Since $H$ is Hurwitz, there exists $Q\in\cM_+$ such that $-(QH+H'Q)\in\cM_+$
(see \cite[Lemma~2.2]{Hurwitz}).
The left-hand side of \cref{ET1.4A} then reads
\begin{equation*}2\,\bigr\langle \varDelta_z\Tilde b(x),Qz\bigl\rangle \,=\,
2\,\bigl\langle Hz,Qz\bigr\rangle\,=\,
\bigl\langle (QH+H'Q)z,z\bigr\rangle\qquad \forall\,x,z\in\RR^n\,.\end{equation*}
Now, by setting
\begin{equation*}c(p)\df \inf_{z\in\RR^n}\frac{-p\,\bigl\langle (QH+H'Q)z,z\bigr\rangle}{2\,|z|^2_Q}\,,\end{equation*}
the assertions of \cref{T1.4} follow.
We remark here that this result does not necessarily imply ergodicity of $\process{X}$
in the $f$-norm.
Indeed, let $n=1$, and take $L_t\equiv0$ for $t\ge0$. 
Then it is easy to see that $X_t=x\,\E^{Ht}$ for $t\ge0$.
Thus, $\uppi(\D x)=\updelta_0(\D x)$, and $\updelta_x P_t$ converges to $\uppi(\D x)$,
as $t\to\infty$, in $\sW_p$-distance for any $p\ge1$, but clearly this convergence
cannot hold in the $f$-norm.

If $\theta_\upnu>1$, and $\process{X}$ satisfies the assumptions in
\cite[Theorem~3.1]{Arapostathis-Pang-Sandric-2019}
(which ensure that $\process{X}$ is irreducible and aperiodic, and that the support of
the corresponding irreducibility measure has nonempty interior),
then according to \cite[Proposition 4.3]{Albeverio-Brzezniak-Wu-2010} and
\cite[Theorems 5.1 and 7.1]{Tweedie-1994}
(which imply that every compact set is petite for $\process{X}$) 
the conclusions of \cref{T1.3}\,\ttup{ii} hold true for any
$\theta\in(1,\theta_\upnu]\cap\Theta_\upnu$.
If $\theta_\upnu>0$,
then under the same assumptions as above,
\cite[Theorem~5.2]{Down-Meyn-Tweedie-1995},
\cite[Proposition 6.1]{Meyn-Tweedie-AdvAP-II-1993}, and
\cite[Theorem~4.2]{Meyn-Tweedie-AdvAP-III-1993}
(and \cite[Proposition 4.3]{Albeverio-Brzezniak-Wu-2010},
and \cite[Theorems 5.1 and 7.1]{Tweedie-1994}) imply that for any
$\theta\in(0,\theta_\upnu]\cap\Theta_\upnu$ the process $\process{X}$ is exponentially
ergodic in the $f$-norm with $f(x)=\Lyap_{Q,\theta}(x)+1$.
However, this does not necessarily imply ergodicity of $\process{X}$ in the
$\sW_{p}$-distance.
To see this take again $n=1$, and let $\process{L}$ be a one-dimensional symmetric
$\alpha$-stable L\'evy process with $\alpha\in(0,1)$ and symbol
(characteristic exponent) $q(\xi)=\abs{\xi}^\alpha$.
Thus, $1\notin\Theta_\upnu$, and $\theta_\upnu=\alpha$.
We claim that $\uppi\notin\cP_1(\RR)$. Assume this is not the case. Then,
\begin{equation*}
\int_{ \RR}\int_{ \RR}\abs{y}\,p(t,x,\D y)\,\uppi(\D x)\,=\,
\int_{ \RR}\abs{x}\,\uppi(dx)\,<\,\infty\qquad \forall\,t\ge0\,.
\end{equation*}
In particular, for every $t>0$ it holds that $\int_{ \RR} \abs{y}\,p(t,x,\D y)<\infty$,
$\uppi$-a.e.
On the other hand, according to \cite[Theorem~3.1]{Sato-Yamazato-1984}, we have
\begin{equation*}P_tf(x)\,=\,\int_{\RR}f(\E^{Ht}x+y)\,\upmu_t(\D y)\end{equation*} for all $t\ge0$, 
$x\in\RR$ and $f\in\cB_b(\RR),$
where $\upmu_t(\D y)$ is a probability measure on $\RR$ with characteristic function
\begin{equation*}\Hat{\upmu}_t(\xi)\,=\,\E^{-\int_0^tq(\E^{Hs}\xi)\,\D s}
\,=\,\E^{\frac{1-\E^{\alpha Ht}}{\alpha H}\,\abs{\xi}^\alpha}\,,
\qquad t\ge0\,,\quad \xi\in\RR\,,\end{equation*}
and $\cB_b(\RR)$ denotes the space of bounded functions in $\cB(\RR)$.
Hence, $\upmu_t(\D y)$ is the law of a symmetric $\alpha$-stable random variable.
Now, the monotone convergence theorem implies that
\begin{equation*}\int_{\RR}\abs{y}\,p(t,x,\D y)\,=\,
\int_{ \RR}\abs{\E^{Ht}x+y}\,\upmu_t(\D y)\,<\,\infty
\qquad \forall\,(t,x)\in[0,\infty)\times\RR\,,\end{equation*}
which is impossible.

Let us mention that ergodic properties of Ornstein-Uhlenbeck processes with jumps
in the $f$-norm, and in particular in the total variation norm,
have been considered in \cite{Kevei16,Masuda-04,Schiling-Wang-2012,Wang11,Wang12}.

%%%%%%%%%%%%%%%%%%%%%%%%%%%%%%%%%%%%%%%%%%%%%%%%%%%%%%%%%%%%%%%%%%%%%%%%%%%%%%%%
\subsection{Piecewise Ornstein-Uhlenbeck processes with jumps}\label{S3.3}

We extend the results from the previous subsection to a class of It\^{o} processes
with a piecewise linear drift.
Consider an $n$-dimensional SDE of the form
\begin{equation}\label{PL-SDE}
X(t)\,=\,x+\int_0^t\Bar b\bigl(X(s)\bigr)\,\D{s} +\int_0^t\sigma\bigl(X(s)\bigr)\,
\D B(s)+ L(t)\,,\qquad t\ge0\,,\quad x\in\RR^n\,,
\end{equation} 
where 
\begin{enumerate}
	\item [\ttup{i}]
	the function $\Bar b\colon\RR^n\to\RR^n$ is given by
	\begin{equation*}
		\Bar b(x) \,=\, l-M(x-\langle e,x\rangle^+v)-\langle e,x\rangle^+\varGamma v\,,
	\end{equation*}
	where $l \in \RR^n$, $v\in\RR^n$ has nonnegative components and satisfies
	$\langle e,v\rangle=1$ with $e = (1,\dotsc,1)'\in\RR^n$,
	$M\in\RR^{n\times n}$ is a nonsingular M-matrix
	such that the vector $e'M$ has nonnegative components, 
	and $\varGamma=\diag(\gamma_1,\dotsc,\gamma_n)$ with $\gamma_i\ge0$ for
	$i=1,\dotsc,n$\,;
	
	\item [\ttup{ii}] 
	$\process{B}$ is a standard $m$-dimensional Brownian motion, and
	the covariance function $\sigma\colon \RR^n \to \RR^{n\times m}$ is locally
	Lipschitz continuous and satisfies, for some $c>0$,
	\begin{equation*}
		\norm{\sigma(x)}^2\,\le\,c\,(1+\abs{x}^2)\qquad \forall\,x\in\RR^n\,;
	\end{equation*}
	
	\item [\ttup{iii}]
	$\process{L}$ is a $n$-dimensional
	pure-jump L\'evy process specified by a drift $b_L\in\RR^n$ and L\'evy measure
	$\upnu_L(\D y)$. 
\end{enumerate}

Recall that a $n\times n$ matrix $M$ is called an M-matrix
if it can be expressed as $M=\mu\mathbb{I}_n-N$ for some $\mu>0$ and some
nonnegative $n\times n$ matrix $N$ with the property that $\rho(N)\le \mu$, where
$\mathbb{I}_n$ and $\rho(N)$ denote the $n\times n$ identity matrix
and the spectral radius of $N$, respectively. 
Clearly, the matrix $M$ is nonsingular if $\rho(N)<\mu$.
It is well known that the SDE in \cref{PL-SDE} admits a unique conservative strong
solution $\process{X}$ which is a strong Markov process with c\`{a}dl\`{a}g sample paths (see e.g. \cite[Theorem 3.1 and Proposition 4.2]{Albeverio-Brzezniak-Wu-2010}). 
In particular, $\process{X}$ is an It\^{o} process satisfying \hyperlink{MP}{\ttup{MP}}
with $b(x)=b_L+\Bar b(x)$, $a(x)=\sigma(x)\sigma(x)'$, and $\upnu(x,\D y)=\upnu_L(\D y)$. 
This process is often called a piecewise Ornstein-Uhlenbeck process with jumps. 
It arises as a limit of the suitably scaled queueing processes of multiclass many-server
queueing networks with heavy-tailed (bursty) arrivals and/or asymptotically negligible
service interruptions.
In these models, if the scheduling policy is based on a static priority assignment
on the queues, then the vector $v$ in the limiting diffusion \cref{PL-SDE} corresponds
to a constant control.
The process $\process{X}$ also arises in many-server queues with phase-type service times,
where the constant vector $v$ corresponds to the probability distribution of the phases.
For a multiclass queueing network with independent heavy-tailed arrivals, the process
$\process{L}$ is an anisotropic L\'evy process consisting of independent one-dimensional
symmetric $\alpha$-stable components.
Under service interruptions, $\process{L}$ is either a compound Poisson process,
or an anisotropic L\'evy process described above together with a compound Poisson
component.
More details on these queueing models can be found in
\cite[Section~4]{Arapostathis-Pang-Sandric-2019}.

We first discuss the case when $\varGamma v=0$.
This corresponds to the case when the control gives lowest priority to queues
whose abandonment rate is zero.
When $1\in\Theta_\upnu$, we define
\begin{equation}\label{L}
\Tilde l\,\df\, l+b_L+\int_{\sB^c}y\,\upnu_L(\D y)\,.
\end{equation}

%%%%%%%%%%%%%%%%%%%%%%%%%%%%%%%%%%%%%%%%%%%%%%%%%%%%%%%%%%%%%%%%%%%%%%%%%%%%%%%%
\begin{proposition}\label{P3.2}
	In addition to the assumptions of \cite[Theorem~3.1]{Arapostathis-Pang-Sandric-2019}
	\textup{(}which ensure that $\process{X}$ is irreducible and aperiodic with
	irreducibility measure having support with nonempty interior\textup{)},
	suppose that $\varGamma v=0$, $2\in\Theta_\upnu$, and
	$\bigl\langle e,M^{-1}\Tilde{l}\bigr\rangle<0$.
	\begin{enumerate}
		\item [\ttup{i}]
		If \begin{equation}\label{A}
		\limsup_{\abs{x}\to\infty}\,\frac{\norm{a(x)}}{\abs{x}}\,=\,0\,,
		\end{equation}
		then there exists $Q\in\cM_+$ such that the assertions of \cref{T1.3}\,\ttup{i}
		hold true with $\vartheta=1$.
		
		\item [\ttup{ii}]
		If $a(x)$ is bounded, and
		$\int_{\sB^c}\E^{\theta\rvert y\rvert}\,\upnu_L(\D y) < \infty$
		for some $\theta>0$, then there exists $Q\in\cM_+$ such that the assertions
		of \cref{T1.3}\,\ttup{iii} hold.
	\end{enumerate}
\end{proposition}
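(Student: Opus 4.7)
The plan is to reduce each claim to Theorem~\ref{T1.3}: I would apply Theorem~\ref{T1.3}(i) for part~(i) of the proposition (taking $\theta=2\in\Theta_\upnu$ and $\vartheta=1$, so that $\theta-3+\vartheta=0$) and Theorem~\ref{T1.3}(iii) for part~(ii). Irreducibility and aperiodicity are imported from \cite[Theorem~3.1]{Arapostathis-Pang-Sandric-2019}; to secure the petite-set hypothesis I would invoke \cite[Proposition~4.3]{Albeverio-Brzezniak-Wu-2010} together with \cite[Theorems~5.1 and 7.1]{Tweedie-1994}, since the irreducibility measure has support with nonempty interior. The tail condition \cref{ET1.3A} will follow from $\upnu_L$ being state-independent with finite second moment, and \cref{ET1.3B} from the assumed exponential integrability; the growth control on $a(x)$ is provided by \cref{A} in case~(i) and by boundedness in case~(ii).

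The substantive task in both cases is to exhibit $Q\in\cM_+$ verifying the drift inequality. Using $\Gamma v=0$ and the definition of $\tilde l$ in \cref{L}, I would rewrite the relevant drift as
\begin{equation*}
b(x)+\Ind_{[1,\infty)}(\theta)\int_{\sB^c}y\,\upnu_L(\D y)\,=\,\tilde l-M\bigl(x-\langle e,x\rangle^+v\bigr).
\end{equation*}
On $\{\langle e,x\rangle<0\}$ the linear part reduces to $-Mx$; since $M$ is a nonsingular M-matrix and hence positive stable, the classical Lyapunov theorem furnishes a $Q\in\cM_+$ with $QM+M'Q\in\cM_+$, yielding $\langle-Mx,Qx\rangle\le-c\abs{x}^2$. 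On $\{\langle e,x\rangle\ge0\}$ the linear part becomes $-M(I-ve')x$; since $e'v=1$, the operator $I-ve'$ is a projection with null space $\mathrm{span}\{v\}$, so this linear map is rank-deficient and vanishes along $v$.

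The main obstacle is precisely this degeneracy in the positive half-space: along the ray $\{tv:t>0\}$ the linear drift is zero, and the drift ratio in the hypothesis of Theorem~\ref{T1.3} equals $\langle\tilde l,Qv\rangle/\abs{v}$, so its sign is determined entirely by $\langle\tilde l,Qv\rangle$. Hence $Q$ must be selected so that both $QM+M'Q\in\cM_+$ and $\langle\tilde l,Qv\rangle<0$ hold simultaneously. The hypothesis $\langle e,M^{-1}\tilde l\rangle<0$ is tailored to this: combined with the M-matrix properties $e'M\ge0$ and the entrywise nonnegativity of $M^{-1}$, a suitably scaled $Q$---typically a diagonal rescaling matched to the M-matrix structure of $M$---realizes both inequalities, and the cross term $\langle\tilde l,Qx\rangle$ contributes at most a lower-order correction controlled by $\langle\tilde l,Qv\rangle<0$ along the degenerate direction. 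Once such $Q$ is produced, Theorem~\ref{T1.3}(i) will yield part~(i) with $\phi(t)=t^{1/2}$ and $\eta=1$, and Theorem~\ref{T1.3}(iii) will yield part~(ii) with $\Lyap=\widetilde\Lyap_{Q,\zeta}$ for any sufficiently small $\zeta>0$.
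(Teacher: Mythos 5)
Your reduction to Theorem~\ref{T1.3} identifies the right framework, and the diagnosis of the degeneracy along the ray $\{tv:t>0\}$ is the correct obstacle. But the step where you claim a suitable $Q$ ``realizes both inequalities'' and that ``the cross term contributes at most a lower-order correction'' is where the argument breaks down, and it is also where your route diverges from the paper's.

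The drift hypothesis of \cref{T1.3}\,\ttup{i} is a $\limsup$ over \emph{all} $x$ with $\abs{x}\to\infty$, not only over rays. Writing $x=\alpha v+w$ with $\alpha=\langle e,x\rangle>0$ and $\langle e,w\rangle=0$, one has $\Tilde b(x)=\Tilde l-Mw$ and
\begin{equation*}
\bigl\langle \Tilde b(x),Qx\bigr\rangle \,=\, \alpha\,\bigl\langle\Tilde l,Qv\bigr\rangle+\bigl\langle\Tilde l,Qw\bigr\rangle-\alpha\,\bigl\langle Mw,Qv\bigr\rangle-\bigl\langle Mw,Qw\bigr\rangle\,.
\end{equation*}
Taking a sequence $x_n=n v+c w_0$ with $w_0$ fixed, $\langle e,w_0\rangle=0$, and $c>0$ a parameter, the ratio converges to $\bigl(\langle\Tilde l,Qv\rangle-c\,\langle Mw_0,Qv\rangle\bigr)/\abs{v}$. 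Unless $\langle Mw_0,Qv\rangle\ge0$ for every such $w_0$---equivalently, unless the linear functional $w\mapsto\langle w,(I-ev')M'Qv\rangle$ vanishes identically on $\{\langle e,w\rangle=0\}$, i.e.\ $M'Qv$ is proportional to $e$---one can pick $w_0$ and $c$ making the limit positive, so the drift condition \emph{fails}. Imposing $M'Qv\propto e$, together with $QM+M'Q\in\cM_+$ and $\langle\Tilde l,Qv\rangle<0$, is a genuinely overdetermined system, and nothing in your sketch produces such a $Q$ or shows one exists. The heuristic about ``a diagonal rescaling matched to the M-matrix structure'' collapses precisely when some component $v_i$ vanishes, and even when $v>0$ the diagonal $Q$ forced by $Qv\propto (M^{-1})'e$ need not render $QM+M'Q$ positive definite.

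The paper does not attempt to verify the drift condition of \cref{T1.3}\,\ttup{i} at all. Instead it cites \cite[Theorem~3.2\,\ttup{i}]{Arapostathis-Pang-Sandric-2019}, which proves the Foster--Lyapunov inequality $\mathcal L\Lyap_{Q,\theta}\le\Bar c-\Tilde c\,\Lyap_{Q,\theta-1}$ directly for a carefully constructed $Q$ depending on the M-matrix structure of $M$, and then plugs that inequality into the machinery of \cref{T1.1} (the phrase ``follows from \cref{T1.3}\,\ttup{i}'' in the paper refers to the downstream part of that theorem's proof once the drift inequality is in hand, not to its stated hypotheses). The same distinction holds for part~\ttup{ii}, where the paper imports the drift inequality for $\widetilde\Lyap_{Q,\zeta}$ from \cite[Theorem~3.2\,\ttup{ii}]{Arapostathis-Pang-Sandric-2019} together with \cref{L2.2}\,\ttup{iii}, rather than verifying the $\limsup$ hypothesis of \cref{T1.3}\,\ttup{iii}. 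In short: your plan substitutes a direct verification of a pointwise drift-ratio condition for the citation of a nontrivial Lyapunov drift inequality, and that substitution is unjustified. A correct proof must either invoke the APS2019 drift inequality as the paper does, or supply the delicate M-matrix argument that produces it; the two-line claim about the existence of $Q$ does neither.

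A secondary, smaller point: fixing $\theta=2$ and $\vartheta=1$ only yields $\eta=1$ and $\phi(t)=t^{1/2}$, whereas the proposition's conclusion, via \cref{T1.3}\,\ttup{i}, covers all $\theta\in[1,\theta_\upnu]\cap\Theta_\upnu$ with $\eta=\theta-1$, so even if your route succeeded you would be proving a strictly weaker statement.
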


\begin{proof}
	\begin{itemize}
		\item [\ttup{i}] In \cite[Theorem~3.2\,\ttup{i}]{Arapostathis-Pang-Sandric-2019} 
		it has been shown that there exist $Q\in\cM_+$, $\Bar c=\Bar c(\theta)>0$,
		and $\Tilde c=\Tilde c(\theta)>0$, such that for any
		$\theta\in[1,\theta_\upnu]\cap\Theta_\upnu$, we have
		\begin{equation*}\mathcal{L} \Lyap_{Q,\theta}(x)\,\le\, \Bar c
		-\Tilde c\, \Lyap_{Q,\theta-1}(x)\qquad \forall\,x\in\RR^n\,.\end{equation*}
		It is easy to see that the above relation implies that there exist $r>0$, $\Hat c>0$,
		and $\Breve{c}>0$, such that
		\begin{equation*}\mathcal{L} \Lyap(x)\,\le\, \Hat c\,\Ind_{\Bar{\sB}_r}(x)
		-\Breve c\, \bigl(\Lyap(x)\bigr)^{\nicefrac{(\theta-1)}{\theta}}
		\qquad \forall\,x\in\RR^n\,,\end{equation*}
		with $\Lyap(x)=\Lyap_{Q,\theta}(x)+1$.
		The assertion now follows from \cref{T1.3}\,\ttup{i}, together with
		\cite[Proposition 4.3]{Albeverio-Brzezniak-Wu-2010},
		\cite[Theorem~3.4]{Douc-Fort-Guilin-2009},
		and \cite[Theorems 5.1 and 7.1]{Tweedie-1994}. 
		
		\item [\ttup{ii}]
		Let $\Tilde{b}(x)\,\df\, \Bar{b}(x) + \Tilde{l} -l$.
		As shown in the proof of \cite[Theorem~3.2\,\ttup{ii}]{Arapostathis-Pang-Sandric-2019}, there exist $Q\in\cM_+$, $\Bar c=\Bar c(\zeta)>0$
		and $\Tilde c=\Tilde c(\zeta)>0$, such that
		for any $\zeta\in\bigl(0,\theta\norm{Q}^{-\nicefrac{1}{2}}\bigr)$,
		\begin{equation*}
			\bigl\langle \Tilde{b}(x), \nabla \widetilde{V}_{Q,\zeta}(x)\bigr\rangle
			\,\le\, \bar c- \tilde c \, \widetilde{V}_{Q,\zeta}(x)
			\qquad \forall\,x\in\RR^n\,. 
		\end{equation*}
		This together with \cref{L2.2}\,(iii) imply
		that, for any  $\zeta>0$ sufficiently small, there exist
		$\hat c=\hat c(\zeta)>0$
		and $\check c=\check c(\zeta)>0$, such that
		\begin{equation*}\mathcal{L} \widetilde\Lyap_{Q,\zeta}(x)\,\le\, \hat c-\check c \,\widetilde\Lyap_{Q,\zeta}(x)
		\qquad \forall\,x\in\RR^n\,.\end{equation*} 
		Again, It is straightforward to see that the above relation implies that there
		exist $r>0$, $\breve c>0$ and $\mathring{c}>0$, such that
		\begin{equation*}\mathcal{L} \widetilde\Lyap_{Q,\zeta}(x)\,\le\, \breve c\,\Ind_{\Bar{\sB}_r}(x)
		-\mathring c\, \widetilde\Lyap_{Q,\zeta}(x)\qquad\forall\, x\in\RR^n\,.\end{equation*} 
		The assertion now follows from \cref{T1.3}\,\ttup{iii}, and
		the results from \cite{Albeverio-Brzezniak-Wu-2010,Douc-Fort-Guilin-2009,Tweedie-1994}
		cited in part \ttup{i}.\endproof
	\end{itemize}
\end{proof}

\begin{remark}
	It has been shown
	in \cite[Theorem~3.3\,\ttup{b} and Lemma~5.7]{Arapostathis-Pang-Sandric-2019}
	that the assumptions $1\in\Theta_\upnu$ and
	$\langle e,M^{-1}\Tilde{l}\rangle<0$ are both necessary
	for the existence of an invariant probability measure of $\process{X}$.
	Using this, we can exhibit an example where we have ergodicity with respect to the
	$f$-norm but not with respect to $\sW_p$-distance.
	Suppose that $\varGamma v=0$,
	$\langle e,M^{-1}\Tilde{l}\rangle<0$, $a(x)$ satisfies \cref{A}, and
	$\process{L}$ is a rotationally invariant $\alpha$-stable process with $\alpha \in (1,2)$.
	Then \cite[Theorem~3.1\,\ttup{i}]{Arapostathis-Pang-Sandric-2019} shows that
	$\process{X}$ admits a unique invariant $\uppi\in\cP_{\alpha-1-\iota}(\RR^n)$
	for $\iota\in(0,\alpha-1)$, and 
	\begin{equation*}
		\lim_{t\to\infty}t^{\alpha-1-\iota}\,\norm{\updelta_x P_t(\cdot)
			-\uppi(\cdot)}_{\mathrm{TV}}\,=\,0\end{equation*} for all $x\in\RR^n$ and $\iota\in (0,\alpha-1)$. Here, $\norm{\cdot}_\mathrm{TV}$ stands for the total variation norm, i.e.
	the $f$-norm with $f(x)\equiv1$.
	However, $\int_{\RR^n} \abs{x}\, \uppi(\D x)=\infty$
	by \cite[Theorem~3.4\,(b)]{Arapostathis-Pang-Sandric-2019}, so we cannot
	have convergence in $\sW_1$-distance.
\end{remark}

We next exhibit a lower bound on the polynomial rate of convergence
in \cref{P3.2}\,\ttup{i}, which is analogous to
\cite[Theorem~3.4]{Arapostathis-Pang-Sandric-2019}.
We let 
\begin{equation}\label{THETA}
\Tilde\theta_\upnu \,\df\, 
\sup\biggl\{ \theta\ge 0\colon \int_{\RR^n}\bigl(\langle e,M^{-1}y\rangle^+\bigr)^\theta\,
\upnu_L(\D y)<\infty\biggr\}\,.
\end{equation}
Note that, in general, $\Tilde\theta_\upnu\ge\theta_\upnu$.
In \cite{Arapostathis-Pang-Sandric-2019} it is assumed that
$\process{L}$ is a compound Poisson process with drift $b_L$,
and L\'evy measure $\upnu_L(\D y)$ which is supported on a half-line of the form
$\{\zeta w\colon \zeta\in[0,\infty)\}$ with $\langle e,M^{-1}w\rangle>0$,
and $a(x)$ satisfies \cref{A}.
This implies that $\Tilde\theta_\upnu=\theta_\upnu$, and subsequently,
this equality is used in the
proof of \cite[Lemma~5.7\,\ttup{b}]{Arapostathis-Pang-Sandric-2019}
to establish that, provided $\varGamma v=0$, 
$\int_{\RR^n}\bigl(\langle e,M^{-1}x\rangle^+\bigr)^{p-1}\,
\uppi(\D x)<\infty$ implies $p\in\Theta_\upnu$ for $p>1$. 
We use this fact, namely
that the conclusions of \cite[Lemma~5.7\,\ttup{b}]{Arapostathis-Pang-Sandric-2019}
hold under the weaker assumption that $\Tilde\theta_\upnu=\theta_\upnu$
in the proof of the following proposition.

%%%%%%%%%%%%%%%%%%%%%%%%%%%%%%%%%%%%%%%%%%%%%%%%%%%%%%%%%%%%%%%%%%%%%%%%%%%%%%%%
\begin{proposition}\label{P3.3}
	In addition to the assumptions of \cite[Theorem~3.1]{Arapostathis-Pang-Sandric-2019},
	assume that $\varGamma v=0$, $\bigl\langle e,M^{-1}\Tilde{l}\bigr\rangle<0$,
	and $\Tilde\theta_\upnu=\theta_\upnu\in(2,\infty)$.
	Then, due to \cref{P3.2}\,\ttup{i}, $\process{X}$ admits a unique invariant
	$\uppi\in\cP_{\theta_\upnu-1-\iota}(\RR^n)$,
	$\iota\in(0,\theta_\upnu-1)$.
	Next, fix $\rho\in(0,(\theta_\upnu-2)\wedge1)$ and $\varepsilon\in(\rho,1)$.
	Then, for any $p\in[1,\theta_\upnu-\rho-1]$ and $\iota\in(0,1-\varepsilon)$ there
	exist $\Bar c>0$ and a diverging increasing sequence
	$\{t_n\}_{n\in\NN}\subset[0,\infty)$, depending on these parameters, such that
	\cref{ET1.2A} holds with $\theta =\theta_\upnu-\rho$, $\vartheta=\theta-1$,
	and $\Lyap(x)=\Lyap_{Q,\theta}(x)+1$,
	where $Q\in\cM_+$ is given in \cref{P3.2}\,\ttup i. 
\end{proposition}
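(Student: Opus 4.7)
The plan is to verify the hypotheses of \cref{T1.2} with Foster-Lyapunov data furnished by \cref{P3.2}\,\ttup{i}, and then to identify a Lipschitz comparison function whose moment of the required order against $\uppi$ diverges.

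First, set $\theta\df\theta_\upnu-\rho$ and $\vartheta\df\theta-1$. Since $\rho\in(0,(\theta_\upnu-2)\wedge 1)$, we have $\vartheta>1$ and $\theta-\vartheta=1$. Let $Q\in\cM_+$ be as in \cref{P3.2}\,\ttup{i}, take $\Lyap(x)=\Lyap_{Q,\theta}(x)+1$, and set $\phi(t)=t^{(\theta-1)/\theta}$. The proof of \cref{P3.2}\,\ttup{i} yields the infinitesimal drift
\begin{equation*}
\mathcal{L}\Lyap(x)\,\le\,\hat c\,\Ind_{\overline\sB_r}(x)-\breve c\,\phi\circ\Lyap(x)\qquad\forall\,x\in\RR^n\,,
\end{equation*}
for some constants $r,\hat c,\breve c>0$, and, through \cref{L2.1} and a standard localization with the stopping times $\tau_k$ (as performed in the proof of \cref{T1.3}), this lifts to the integrated form \cref{ET1.1A}; the petite set is $C=\overline\sB_r$, which is petite under the inherited hypotheses of \cite[Theorem~3.1]{Arapostathis-Pang-Sandric-2019}. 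The Euclidean $\RR^n$ is trivially a length space.

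Next, take $L(x)\df\langle e,M^{-1}x\rangle^+$, which is Lipschitz with constant $\norm{M^{-1}}\norm{e}$. Since $\chi_Q(x)\ge \underline\lambda_Q^{1/2}\,\abs{x}$ for $x\in\sB^c$ and $L(x)\le\norm{M^{-1}}\norm{e}\,\abs{x}$, one obtains, after a standard adjustment on the bounded region, a constant $c>0$ such that $\Lyap(x)\ge c\,\bigl(L(x)\bigr)^{\theta}$ and $\phi\circ\Lyap(x)\ge c\,\bigl(L(x)\bigr)^{\vartheta}$ for every $x\in\RR^n$, as required by \cref{T1.2}.

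The essential step is to show the divergence $\int_{\RR^n}(L(x))^{\vartheta+\varepsilon}\,\uppi(\D x)=\infty$ for each admissible $\varepsilon$. Set $p\df\vartheta+\varepsilon+1=\theta_\upnu-\rho+\varepsilon$. The constraint $\varepsilon\in(\rho,1)$ forces $p>\theta_\upnu=\sup\Theta_\upnu$, so $p\notin\Theta_\upnu$. As noted in the paragraph preceding the proposition, under the hypothesis $\Tilde\theta_\upnu=\theta_\upnu$ the conclusion of \cite[Lemma~5.7\,\ttup{b}]{Arapostathis-Pang-Sandric-2019} continues to hold, giving
\begin{equation*}
\int_{\RR^n}\bigl(\langle e,M^{-1}x\rangle^+\bigr)^{p-1}\,\uppi(\D x)\,<\,\infty\quad\Longrightarrow\quad p\in\Theta_\upnu\,.
\end{equation*}
By contraposition, since $p\notin\Theta_\upnu$, the integral on the left is infinite, which is the required divergence.

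With all hypotheses of \cref{T1.2} verified, its conclusion supplies \cref{ET1.2A} with the parameters stated in the proposition. The only nontrivial point is the moment divergence; this is where the assumption $\Tilde\theta_\upnu=\theta_\upnu\in(2,\infty)$ is exploited, transferring tail information from the L\'evy measure to $\uppi$ through the one-dimensional projection $L(x)=\langle e,M^{-1}x\rangle^+$ adapted to the M-matrix structure of the drift.
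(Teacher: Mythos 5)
Your proof is correct and follows essentially the same route as the paper's: invoke the Foster--Lyapunov drift from \cref{P3.2}\,\ttup{i} to supply the hypothesis \cref{ET1.1A}, take $L(x)=\langle e,M^{-1}x\rangle^+$, verify the growth bounds, and establish the required moment divergence via the contrapositive of \cite[Lemma~5.7\,\ttup{b}]{Arapostathis-Pang-Sandric-2019} using $\vartheta+\varepsilon>\theta_\upnu-1$. You have simply spelled out the contraposition step and the verification of the lower bounds $\Lyap\ge c\,L^\theta$, $\phi\circ\Lyap\ge c\,L^\vartheta$, which the paper leaves implicit.
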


\begin{proof}
	Observe first that $\vartheta+\varepsilon>\theta_\upnu-1$.
	Thus, according to \cite[Lemma~5.7\,\ttup{b}]{Arapostathis-Pang-Sandric-2019}, we
	have
	\begin{equation*}\int_{\RR^n}\bigl(\langle e,M^{-1}x\rangle^+\bigr)^{\vartheta+\varepsilon}\,
	\uppi(\D x)\,=\,\infty\,.\end{equation*}
	The assertion now follows from the proof of \cref{P3.2}\,\ttup i
	(together with \cite[Proposition 4.3]{Albeverio-Brzezniak-Wu-2010},
	\cite[Theorem~3.4]{Douc-Fort-Guilin-2009}, and
	\cite[Theorems 5.1 and 7.1]{Tweedie-1994}),
	and \cref{T1.2} by setting $L(x)=\langle e,M^{-1}x\rangle^+$ and
	$\phi(t)= t^{\nicefrac{(\theta-1)}{\theta}}$.
\end{proof}

We now discuss the case when $\varGamma v\neq0$.
For $x\in\RR^n$, we write $x\ge0$ ($x\gneqq 0$) to indicate that all components of $x$
are nonnegative (nonnegative and at least one is strictly positive).
Also, for $x,y\in\RR^n$ we write $x\ge y$ if, and only if, $x-y\ge0$. 

%%%%%%%%%%%%%%%%%%%%%%%%%%%%%%%%%%%%%%%%%%%%%%%%%%%%%%%%%%%%%%%%%%%%%%%%%%%%%%%%
\begin{proposition}\label{P3.4}
	In addition to the assumptions of \cite[Theorem~3.1]{Arapostathis-Pang-Sandric-2019},
	suppose that $\theta_\upnu>0$, 
	\begin{equation}\label{EP3.4A}
	\limsup_{|x|\to\infty}\,\frac{\lVert a(x)\rVert}{|x|^2} \,=\, 0\,,
	\end{equation}
	and that one of the following holds:
	\begin{enumerate}
		\item[\ttup i]
		$Mv\ge\varGamma v\gneqq0$; 
		\item[\ttup {ii}]
		$M=\diag(m_1,\dotsc,m_d)$ with $m_i>0$,
		$i=1,\dotsc,n$, and $\varGamma v\ne 0$.
	\end{enumerate}
	Then there exists $Q\in\cM_+$ such that the assertions of
	\Cref{T1.3}\,\ttup{ii} hold true.
\end{proposition}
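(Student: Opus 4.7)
The plan is to verify the hypotheses of \cref{T1.3}\,\ttup{ii} applied with $\Lyap(x)=\Lyap_{Q,\theta}(x)+1$ for a suitable $Q\in\cM_+$ and some $\theta\in(0,\theta_\upnu]\cap\Theta_\upnu$. Three of them are immediate: $\theta_\upnu>0$ is given; since $\upnu(x,\D y)\equiv\upnu_L(\D y)$ is independent of $x$, the jump tail condition \cref{ET1.3A} reduces to $\int_{\sB_r^c}|y|^\theta\,\upnu_L(\D y)\to0$ as $r\to\infty$, which holds for every $\theta\in(0,\theta_\upnu]\cap\Theta_\upnu$; and the diffusion growth bound is exactly \cref{EP3.4A}. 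Irreducibility, aperiodicity, and the petiteness of compact sets follow from the hypotheses of \cite[Theorem~3.1]{Arapostathis-Pang-Sandric-2019} combined with \cite[Proposition~4.3]{Albeverio-Brzezniak-Wu-2010} and \cite[Theorems~5.1 and 7.1]{Tweedie-1994}, exactly as in the proof of \cref{P3.2}.

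The substantive task is to exhibit $Q\in\cM_+$ realizing the drift condition. Since the constant contribution $b_L+l+\Ind_{[1,\infty)}(\theta)\int_{\sB^c}y\,\upnu_L(\D y)$ is bounded, it suffices, writing $F(x)\df -Mx+\langle e,x\rangle^+(M-\Gamma)v$, to produce $Q\in\cM_+$ and $c>0$ such that
\begin{equation*}
\bigl\langle F(x),Qx\bigr\rangle \,\le\, -c\,|x|^2 \qquad \forall\,x\in\RR^n\,.
\end{equation*}
On the half-space $\{\langle e,x\rangle\le 0\}$, where $F(x)=-Mx$, this is a standard Lyapunov inequality, solvable for any $Q$ in the (nonempty) Lyapunov cone of the nonsingular M-matrix $M$, whose spectrum lies in the open right half-plane. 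The real point is to choose $Q$ so that the rank-one perturbation $(M-\Gamma)v\,e'$ does not destroy dissipativity on $\{\langle e,x\rangle>0\}$, where $F(x)=\bigl(-M+(M-\Gamma)v\,e'\bigr)x$.

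Under hypothesis \ttup{i}, since $(M-\Gamma)v\ge0$ and $e'M\ge0$, the perturbed matrix $-M+(M-\Gamma)v\,e'$ retains nonnegative off-diagonal entries and remains Hurwitz, so I would adapt the $Q$-construction from \cite[Theorem~3.2]{Arapostathis-Pang-Sandric-2019}, where an analogous piecewise linear drift is handled for exponential ergodicity in the $f$-norm, to obtain a single $Q\in\cM_+$ that dissipates both regimes simultaneously. Under hypothesis \ttup{ii}, with $M=\diag(m_i)$, I would try a diagonal $Q=\diag(q_i)$ with $q_i>0$ tuned to $m_i$, $v_i$, and $\gamma_i$, using the assumption $\Gamma v\ne0$ to extract a strictly negative cross term on $\{\langle e,x\rangle>0\}$ that dominates the potentially positive piece $\langle e,x\rangle\,\langle Mv,Qx\rangle$. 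The main obstacle I anticipate is obtaining a uniform coercive estimate continuous across the hyperplane $\{\langle e,x\rangle=0\}$, which will be handled by a direct case split according to the sign of $\langle e,x\rangle$. Once $Q$ is in hand, \cref{T1.3}\,\ttup{ii} applies directly and delivers the claimed ergodic properties.
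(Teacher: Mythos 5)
Your overall plan — verify the drift hypotheses of \cref{T1.3}\,\ttup{ii} directly — is viable, and your treatment of the peripheral conditions (the jump tail estimate, the diffusion growth bound, and irreducibility/aperiodicity/petiteness) matches what the paper does. The paper's own proof is, however, shorter: it does not verify the coefficient-level drift condition from scratch, but instead cites \cite[Theorem~3.5]{Arapostathis-Pang-Sandric-2019}, which already produces the matrix $Q\in\cM_+$ together with the generator-level Foster--Lyapunov inequality $\mathcal{L}\Lyap_{Q,\theta}(x)\le\Bar c-\Tilde c\,\Lyap_{Q,\theta}(x)$ under precisely hypotheses \ttup{i} or \ttup{ii}; this is then fed directly into the pipeline used to prove \cref{T1.3}\,\ttup{ii}.

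There are two concrete problems with your sketch. First, the reference is wrong: \cite[Theorem~3.2]{Arapostathis-Pang-Sandric-2019} handles the case $\varGamma v=0$ (the one used in \cref{P3.2}); the relevant result for $\varGamma v\ne0$ under \ttup{i} or \ttup{ii} is \cite[Theorem~3.5]{Arapostathis-Pang-Sandric-2019}, which is what establishes the existence of a single $Q$ in the intersection of the Lyapunov cones of $M$ and $M-(M-\varGamma)ve'$ (this is the same $Q$ that appears again in \cref{P3.5}, \cref{EP3.5A}). Second, the construction of $Q$ is the entire substance of the proposition, and you leave it as a plan (``I would adapt\dots'', ``I would try\dots''). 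In particular, the claim that under \ttup{i} the matrix $-M+(M-\varGamma)ve'$ ``remains Hurwitz'' is exactly the nontrivial step: it is a Metzler matrix, and the standard sufficient criterion (existence of a strictly positive vector mapped to a strictly positive vector by $M-(M-\varGamma)ve'$) does not follow immediately from $Mv\ge\varGamma v\gneqq0$, since $(M-(M-\varGamma)ve')v=\varGamma v$ is only $\gneqq0$ and $v$ itself need not be strictly positive. Without either carrying out the $Q$-construction or pointing to \cite[Theorem~3.5]{Arapostathis-Pang-Sandric-2019}, the proof is incomplete at its central step.
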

\begin{proof}
	In \cite[Theorem~3.5]{Arapostathis-Pang-Sandric-2019} 
	it has been shown that there exist $Q\in\cM_+$, $\Bar c=\Bar c(\theta)>0$,
	and $\Tilde c=\Tilde c(\theta)>0$, such that for any
	$\theta\in(0,\theta_\upnu]\cap\Theta_\upnu$, we have
	\begin{equation*}\mathcal{L} \Lyap_{Q,\theta}(x)\,\le\, \Bar c-\Tilde c \,\Lyap_{Q,\theta}(x)
	\qquad\forall\, x\in\RR^n\,.\end{equation*}
	As in \cref{P3.2}, it is easy to see that the above relation implies that there
	exist  $r>0$, $\Hat c>0$ and $\Breve{c}>0$, such that
	\begin{equation*}\mathcal{L} \Lyap(x)\,\le\, \Hat c\,\Ind_{\Bar{\sB}_r}(x)-\Breve c\, \Lyap(x)
	\qquad \forall\,x\in\RR^n\,,\end{equation*}
	with $\Lyap(x)=\Lyap_{Q,\theta}(x)+1$.
	The assertion now follows from \cref{T1.3}\,\ttup{ii}, together with
	the results from \cite{Albeverio-Brzezniak-Wu-2010,Douc-Fort-Guilin-2009,Tweedie-1994}
	cited in the proof of \cref{P3.3}. 
\end{proof}

In the case when $\varGamma v\neq0$ (under \ttup i or \ttup{ii} in \cref{P3.4})
the dynamics are contractive
in the $\sW_p$-distance. This is shown by establishing
an asymptotic flatness (uniform dissipativity) property for $\process{X}$. 
As a consequence, we assert exponential ergodicity of $\process{X}$ with respect to
$\sW_p$, without assuming irreducibility and aperiodicity, i.e.
we allow the SDE in \cref{PL-SDE} to be degenerate.

%%%%%%%%%%%%%%%%%%%%%%%%%%%%%%%%%%%%%%%%%%%%%%%%%%%%%%%%%%%%%%%%%%%%%%%%%%%%%%%%
\begin{proposition}\label{P3.5}
	Suppose that $2\in\Theta_\upnu$, $\sigma(x)$ is Lipschitz continuous,
	and either \ttup i or \ttup{ii} in \cref{P3.4} holds.
	Then there exists $Q\in \mathcal{M}_+$ such that the matrices
	\begin{equation}\label{EP3.5A}
	MQ+QM\,,\quad\text{and}\quad \bigl(M- ev'(M-\varGamma)\bigr)Q 
	+ Q\bigl(M- (M-\varGamma)v e'\bigr)
	\end{equation}
	are in $\mathcal{M}_+$.
	Let
	$\underline\kappa$ denote the smallest eigenvalue
	of the positive definite matrices in \cref{EP3.5A}, and $\overline\lambda_Q$,
	$\underline\lambda_Q$ denote the largest, smallest eigenvalue of $Q$, respectively.
	For $p\ge1$, let
	\begin{equation*}
		c(p) \,\df\, \frac{p}{2}
		\left(\frac{\underline\kappa}
		{\overline\lambda_Q}
		-\frac{(p-1)\Lip^2\bigl(\sqrt{Q}\,\sigma\bigr)}{\underline{\lambda}_Q}\right)\,,
	\end{equation*}
	where $\Lip(\sqrt{Q}\,\sigma)$ is the Lipschitz constant of $\sqrt{Q}\,\sigma(x)$ 
	with respect to the Hilbert-Schmidt norm, and
	suppose that $c(p)>0$ for some $p\in[2,\theta_\upnu]\cap\Theta_\upnu$. 
	Then the assertions of \cref{T1.4} hold true.
	If $\sigma(x)\equiv\sigma$ and $1\in\Theta_\upnu$, the assertions of \cref{T1.4}
	hold true 
	for any $p\in[1,\theta_\upnu]\cap\Theta_\upnu$.
\end{proposition}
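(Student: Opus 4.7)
The plan is to verify the uniform dissipativity condition \cref{ET1.4A} and then directly invoke \cref{T1.4}. The argument rests on the piecewise-linear structure of $\bar b$ and the matrix Lyapunov inequalities in \cref{EP3.5A}.

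First, the existence of $Q\in\mathcal{M}_+$ rendering both matrices in \cref{EP3.5A} positive definite is exactly the content of \cite[Theorem~3.5]{Arapostathis-Pang-Sandric-2019}, the same result invoked in the proof of \cref{P3.4}. Under \ttup{i} one exploits the nonsingular M-matrix structure together with $Mv\ge\varGamma v\gneqq 0$; under \ttup{ii} a diagonal choice suffices. I would simply cite that construction.

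Next, writing $\bar b(x)=l-Mx+\langle e,x\rangle^+(M-\varGamma)v$ and using $\upnu(x,\D y)\equiv\upnu_L(\D y)$, one has $\varDelta_z\tilde b(x)=\varDelta_z\bar b(x)$. The function $\bar b$ is globally Lipschitz and piecewise linear with Jacobian $-M$ on $\{\langle e,\cdot\rangle<0\}$ and $-M+(M-\varGamma)ve'$ on $\{\langle e,\cdot\rangle>0\}$. Writing
\begin{equation*}
\varDelta_z\bar b(x)\,=\,\int_0^1 D\bar b(x+tz)\,z\,\D t\,,
\end{equation*}
the two positive-definiteness assertions of \cref{EP3.5A} imply that, for each $t$ and regardless of which affine piece is active,
\begin{equation*}
\langle D\bar b(x+tz)z,Qz\rangle\,\le\,-\tfrac{1}{2}\,\underline\kappa\,\abs{z}^2
\,\le\,-\tfrac{\underline\kappa}{2\,\overline\lambda_Q}\,\abs{z}_Q^2\,.
\end{equation*}
Integrating in $t$ and multiplying by $2$ yields $2\langle\varDelta_z\tilde b(x),Qz\rangle\le -\tfrac{\underline\kappa}{\overline\lambda_Q}\abs{z}_Q^2$.

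For the remaining terms in \cref{ET1.4A}, Lipschitz continuity of $\sqrt{Q}\,\sigma(\cdot)$ gives
\begin{equation*}
\trace\bigl(\tilde a(x;z)Q\bigr)+(p-2)\,\bnorm{\sqrt{Q}\,\varDelta_z\sigma(x)}^2
\,=\,(p-1)\,\bnorm{\sqrt{Q}\,\varDelta_z\sigma(x)}^2
\,\le\,\tfrac{(p-1)\,\Lip^2\bigl(\sqrt{Q}\,\sigma\bigr)}{\underline\lambda_Q}\,\abs{z}_Q^2\,,
\end{equation*}
while the spatial homogeneity of the L\'evy driver, $k(x,v)\equiv k(v)$, makes both jump integrals in \cref{ET1.4A} vanish identically. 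Summing all contributions produces exactly $-\tfrac{2c(p)}{p}\abs{z}_Q^2$ on the left-hand side of \cref{ET1.4A}, so \cref{T1.4} applies whenever $c(p)>0$. When $\sigma(x)\equiv\sigma$ is constant we have $\Lip\bigl(\sqrt{Q}\,\sigma\bigr)=0$, hence $c(p)>0$ for every $p\ge 1$, and the second assertion of \cref{T1.4} extends the conclusion to $p\in[1,\theta_\upnu]\cap\Theta_\upnu$. The only genuinely delicate step is the algebraic construction of $Q$, which has already been carried out in the cited reference.
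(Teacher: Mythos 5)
Your proof is correct, and it improves on the paper's treatment of the drift term. Where you differ: the paper bounds $\langle\varDelta_{y-x}\Tilde b(x),Q(y-x)\rangle$ by an explicit case analysis on whether $x$ and $y$ lie in the same half-space $\{\langle e,\cdot\rangle\ge 0\}$ or not, with a further sub-case split (checking the sign of $\langle y-x, QM\Hat{v}e'x\rangle$) when they straddle the interface. You instead write $\varDelta_z\bar b(x)=\int_0^1 D\bar b(x+tz)\,z\,\D t$ (justified since $\bar b$ is globally Lipschitz, hence absolutely continuous along any segment, and differentiable except on the hyperplane $\{\langle e,\cdot\rangle=0\}$, which a non-degenerate segment crosses at most once), and then observe that on each of the two affine pieces the symmetrized Jacobian $-\bigl(D\bar b(u)'Q+QD\bar b(u)\bigr)$ is exactly one of the two matrices in \cref{EP3.5A}, so $\langle D\bar b(u)z,Qz\rangle\le-\tfrac{1}{2}\underline\kappa\,|z|^2$ pointwise a.e., and integration in $t$ gives the drift bound uniformly in $x,z$. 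This subsumes the paper's three cases in one line and makes transparent why exactly these two matrices must be positive definite. The degenerate configuration where the whole segment lies in $\{\langle e,\cdot\rangle=0\}$ deserves a word (there $D\bar b$ as a map on $\RR^n$ is undefined, but $\bar b$ restricted to that hyperplane is $l-Mx$, so the one-dimensional derivative is $-Mz$ and the first matrix condition applies); you elide this, but it is harmless. The remaining ingredients — citing \cite[Theorem~3.5]{Arapostathis-Pang-Sandric-2019} for the existence of $Q$, the bound $(p-1)\bnorm{\sqrt{Q}\,\varDelta_z\sigma(x)}^2\le\tfrac{(p-1)\Lip^2(\sqrt{Q}\sigma)}{\underline\lambda_Q}|z|_Q^2$ for the diffusion part, noting that the jump integrals in \cref{ET1.4A} vanish because $\process{L}$ is additive (i.e., $k(x,v)\equiv k(v)$ and $\varDelta_z\upnu\equiv 0$), and the observation that $\sigma(x)\equiv\sigma$ forces $\Lip(\sqrt{Q}\sigma)=0$ so $c(p)>0$ for all $p$ — coincide with the paper's argument.
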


\begin{proof}
	Existence of the matrix $Q$ has been proven in
	\cite[Theorem~3.5]{Arapostathis-Pang-Sandric-2019}.
	We prove that \cref{ET1.4A} holds with $c(p)$ defined above. First, clearly,
	\begin{equation}
	\begin{aligned}\label{PP3.5A}
	\trace\,\bigl(\Tilde{a}(x;y-x) Q\bigr)+(p-2)\,
	\bnorm{\sqrt{Q}\,\varDelta_{y-x}\sigma(x)}^{2}\,&\le\,
	(p-1)\Lip^2\bigl(\sqrt{Q}\,\sigma\bigr)\,\abs{y-x}^2\,\\
	&\le\, \frac{(p-1)\Lip^2\bigl(\sqrt{Q}\,\sigma\bigr)}
	{\underline\lambda_Q}\,\abs{y-x}_Q^2
	\end{aligned}
	\end{equation}
	for all $x,y\in\RR^n$.
	We next discuss the term
	$\bigl\langle \varDelta_{y-x}\Tilde b(x),Q(y-x)\bigr\rangle$ for $x,y\in\RR^n$.
	Clearly, $\varDelta_{y-x}\Tilde b(x)=\varDelta_{y-x} \Bar b(x)$ for $x,y\in\RR^n$.
	With $\Hat{v}=-M^{-1}(M v - \varGamma v)$,
	we have
	$\Bar b(x) = l -M (x+\langle e,x\rangle^+\,\Hat{v})$.
	If both $x$ and $y$ are on the same half-space, i.e.
	$\langle e,x\rangle\ge0$ and $\langle e,y\rangle\ge0$,
	or the opposite, then
	\begin{equation*}\bigl\langle \varDelta_{y-x}\Tilde b(x),Q(y-x)\bigr\rangle\,\le\,
	- \frac{\underline\kappa}{2}\, \abs{y-x}^2\,.\end{equation*}
	So suppose, without loss of generality, that
	$\langle e,x\rangle\ge0$ and $\langle e,y\rangle\le0$.
	Then we have
	\begin{subequations}
		\begin{align}
			\bigl\langle y-x, Q \,\Bar b(x) \bigr\rangle &\,=\,
			\bigl\langle y-x,Q\,l\bigr\rangle -\bigl\langle y-x, Q M x\bigr\rangle 
			-\bigl\langle y-x, Q M\Hat{v}e' x\bigr\rangle\label{PP3.5B}\\
			\bigl\langle y-x, Q\, \Bar b(y) \bigr\rangle &\,=\,
			\bigl\langle y-x,Q\,l\bigr\rangle -\bigl\langle y-x, Q M y\bigr\rangle\,.
			\label{PP3.5C}
		\end{align}
	\end{subequations}
	We distinguish two cases.
	\begin{enumerate}
		\item[\ttup i]
		$\bigl\langle y-x, Q M\Hat{v}e' x\bigr\rangle\le0$.
		Then of course subtracting \cref{PP3.5B} from \cref{PP3.5C}, we obtain
		\begin{align*}
			\bigl\langle \varDelta_{y-x}\Tilde b(x),Q(y-x)\bigr\rangle &\,=\,
			- \bigl\langle y-x, QM (y-x)\bigr\rangle
			+ \bigl\langle y-x, Q M\Hat{v}e' x\bigr\rangle\\
			&\,\le\,- \bigl\langle y-x, QM (y-x)\bigr\rangle\\
			& \,\le\, -
			\frac{\underline\kappa}{2}\,\abs{y-x}^2\,.
		\end{align*}
		\item [\ttup{ii}]
		$\bigl\langle y-x, Q M\Hat{v}e' x\bigr\rangle>0$.
		Since $\langle e,x\rangle\ge0$, we must have
		$\bigl\langle y-x, Q M\Hat{v}\bigr\rangle>0$.
		This in turn implies, since $\langle e,y\rangle\le0$, that
		\begin{equation}\label{PP3.5D}
		\bigl\langle y-x, Q M\Hat{v}e' y\bigl\rangle\,\le\,0\,.
		\end{equation}
	\end{enumerate}
	Adding \cref{PP3.5B,PP3.5D} and subtracting
	\cref{PP3.5C} from the sum, we obtain
	\begin{equation}\label{PP3.5E}
	\begin{aligned}
	\bigl\langle \varDelta_{y-x}\Tilde b(x),Q(y-x)\bigr\rangle &\,\le\,
	- \bigl\langle y-x, QM (y-x)\bigr\rangle
	- \bigl\langle y-x, Q M\Hat{v}e' (y-x)\bigr\rangle\\
	&\,\le\,- \bigl\langle y-x, QM(\Id_n+\Hat{v}e') (y-x)\bigr\rangle \,\le\,
	- \frac{\underline\kappa}{2}\,\abs{y-x}^2\,.
	\end{aligned}
	\end{equation}
	Finally, combining \cref{PP3.5A,PP3.5E}, we obtain
	\begin{align*}
		2\,\bigl\langle \varDelta_{y-x}\Tilde b(x),Q(y-x)\bigr\rangle
		+\trace\,\bigl(\Tilde{a}(x;y-x) Q\bigr)&+(p-2)\,
		\bnorm{\sqrt{Q}\,\varDelta_{y-x}\sigma(x)}^{2}\\
		&\,\le\, \left(- \frac{\underline\kappa}{\overline\lambda_Q}+
		\frac{(p-1)\Lip^2\bigl(\sqrt{Q}\,\sigma\bigr)}{\underline\lambda_Q}\right)\abs{y-x}_Q^2\\
		&\,=\, -\frac{2\, c(p)}{p}\, \abs{y-x}_Q^2\qquad\forall\, x,y\in\RR^n\,,\end{align*}
	thus completing the proof.
\end{proof}

The hypothesis in \cref{P3.5} that $c(p)>0$
is, of course, always true if $\sigma(x)\equiv\sigma$, in which
case we have $c(p)=p\frac{\underline{\kappa}}{2\overline{\lambda}_Q}$.
This is the scenario for multiclass queueing models with service interruptions
described in \cite[Section~4.2]{Arapostathis-Pang-Sandric-2019}.

Some examples of degenerate SDEs of the form \cref{PL-SDE} for which \cref{P3.5}
is applicable are the following.
\begin{enumerate}
	\item[\ttup i]
	$\process{L}$ is given by $L(t) = R \Tilde{L}(t)$ for $t\ge0$, where
	$R \in \mathbb{R}^{n\times r}$ has rank smaller than $\min\{n,r\}$,
	and $\{\Tilde{L}(t)\}_{t\ge0}$ is a $r$-dimensional L\'evy process.
	As a special case $\{\Tilde{L}(t)\}_{t\ge0}$ may be composed of mutually
	independent $\alpha$-stable processes.
	This is the case in the queueing example described below.
	
	\item[\ttup{ii}]
	$\process{L}$ is a degenerate subordinate Brownian motion,
	as studied in \cite{zhang2014densities}.
\end{enumerate}

The following is an example of a degenerate SDE that arises in applications
for which \cref{P3.4} is applicable.
Consider a two class $GI/M/k+M$ queue with class-1 jobs having a Poisson process,
and class-2 jobs having a heavy-tailed renewal arrival process.
Service and patience times are exponentially distributed with rates $m_i$ and
$\gamma_i$ for $i=1,2$, respectively.
Assume that the arrival, service and abandonment processes are mutually independent,
and that the number of servers is $k$. 
Consider a sequence of such models indexed by $k$, operating in the critically loaded
asymptotic modified Halfin-Whitt regime as $k\to\infty$.
Let $\process{A^k_i}$ denote the arrival process for class $i=1,2$,
with arrival rates $\lambda^k_i$. 
Assume that $m_i$ and $\gamma_i$ for $i=1,2$ are independent of $k$, and that
$\frac{\lambda^k_i}{k}\to \lambda_i>0$ as $k\to\infty$, for $i=1,2$.
The arrival process $\process{A^k_1}$ satisfies a functional central
limit theorem (FCLT) with a Brownian motion limit
$\process{\Hat{A}_1}=\process{\sqrt{\lambda_1} B_1}$,
where $\process{B_1}$ is a standard Brownian motion, i.e.
\begin{equation*}\process{\Hat{A}^k_1}\, =\,
\bigl\{k^{-\nicefrac{1}{2}}(A^k_1(t) - \lambda^k_1 t)\bigr\}_{t\ge0}\,
\xRightarrow[k\to\infty]{\mathrm{J}_1}\, \process{\Hat{A}_1}\,.\end{equation*}
Here, $\xRightarrow[]{\mathrm{J}_1}$ denotes the convergence in the space
$D=D([0,\infty), \RR)$
of c{\`a}dl{\`a}g functions endowed with the Skorokhod $\mathrm{J}_1$ topology. 
We assume that the arrival process $\process{A^k_2}$ satisfies a FCLT with a symmetric
$\alpha$-stable L\'evy process $\process{\Hat{A}_2}$, $\alpha \in (1,2)$, in the limit,
i.e.
\begin{equation*}
	\process{\Hat{A}^k_2} \,=\,
	\bigl\{k^{-\nicefrac{1}{\alpha}}(A^k_2(t) - \lambda^k_2 t)\bigr\}_{t\ge0}
	\,\xRightarrow[k\to\infty]{\mathrm{M}_1}\, \process{\Hat{A}_2}\,.
\end{equation*}
Here, $\xRightarrow[]{\mathrm{M}_1}$
denotes the convergence in the space $D$ with the $\mathrm{M}_1$ topology. 
Let $\rho^k_i=\frac{\lambda^k_i}{km_i}$ and $\rho_i = \frac{\lambda_i}{m_i}$
for $i=1,2$. 
The modified Halfin-Whitt regime requires the parameters satisfy
\begin{equation*}
	\lim_{k\to\infty}k^{1-\nicefrac{1}{\alpha}} \left(1- \sum_{i=1}^2 \rho_i^k\right)
	\,=\, \Hat{\rho}\in \RR\,,\qquad\text{and}\qquad
	\sum_{i=1}^2 \rho_i \,=\, 1\,.
\end{equation*}
In addition, we assume that
$k^{-\nicefrac{1}{\alpha}} (\lambda^k_i - k \lambda_i)\to l_i$ as $k\to\infty$
for $i=1,2$.
Next, let $\process{X^k_i}$ denote the number of class-$i$ jobs in the system.
Define the scaled processes
$\Hat{X}^k_i(t)=k^{-\nicefrac{1}{\alpha}} (X^k_i(t)-k \rho_it)$ for $t\ge0$. 
Let $\process{U^k_i}$ be the scheduling control process, representing allocations of
service capacity to class $i$.
Let $\Hat{X}^k(t) = \bigl(\Hat{X}^k_1(t), \Hat{X}^k_2(t)\bigr)'$
and $U^k(t)= \bigl(U^k_1(t),U^k_2(t)\bigr)'$ for $t\ge0$. 
We consider work conserving and preemptive scheduling policies resulting in
constant controls at the limit, i.e.
$\process{U^k} \xRightarrow[k\to\infty]{\mathrm{J}_1} \process{V}$,
where $V(t)=v$ for $t\ge0$ with $v\in\RR^2$ being a probability vector.
Then, as in \cite[Theorem~4.1]{Arapostathis-Pang-Sandric-2019}, 
it can shown that
$\process{\Hat{X}^k} \xRightarrow[k\to\infty]{\mathrm{M}_1} \process{X}$,
where the limit process $\process{X}$
is a solution to the following two-dimensional degenerate $\alpha$-stable driven SDE: 
\begin{align*}
	\D X_1(t) &\,=\, \Bigl( l_1-m_1 (X_1(t)
	- \langle e, X(t)\rangle^+ v_1) - \gamma_1 \langle e, X(t)\rangle^+ v_1 \Bigr)
	\D t\,, \\
	\D X_2(t) &\,=\, \Bigl(l_2 -m_2 (X_1(t)
	- \langle e, X(t)\rangle^+ v_2) -\gamma_2
	\langle e, X(t)\rangle^+ v_2 \Bigr) \D t
	+ \D \Hat{A}_2(t)\,,
\end{align*}
which is \cref{PL-SDE} with $l=(l_1,l_2)'$,
$M=\diag(m_1,m_2)$,
$\varGamma=\diag(\gamma_1,\gamma_2)$, $\sigma(x)=(0,0)'$,
and $L(t)=(0,\Hat{A}_2(t))'$ for $t\ge0$.
Observe that the process $\process{X}$ does not fall into any of the four categories in
\cite[Theorem~3.1]{Arapostathis-Pang-Sandric-2019}.
In fact, one can consider multiple classes of jobs with all heavy-tailed arrival
processes that have different scaling parameters $\alpha_i$'s for $i=1,\dots,\Bar{k}$,
in their corresponding FCLTs.
The centered queueing process should be scaled as
$k^{-\nicefrac{1}{\alpha}}$, where $\alpha\df\min_{i=1,\dots,\Bar{k}}\{\alpha_i\}$, and 
the limit process has the components $\process{X_i}$ driven by independent
$\alpha$-stable processes if the arrival process of class $i$ has the parameter
$\alpha_i$ equal to the minimum $\alpha$, and the other components are degenerate
without stochastic driving terms. 

We remark here that without assuming irreducibility
and aperiodicity, establishing subgeometric ergodicity in
the case $\varGamma v=0$ is difficult.
Consider the following example.
Let $n=1$, $\sigma(x)\equiv0$, $L(t)\equiv0$ for $t\ge0$, and 
\begin{equation*}
	\Bar b(x) = \begin{cases}
		-1 \,, & x\ge 0\,,\\[2pt]
		-1 - x\,, &x\le0\,.
	\end{cases} 
\end{equation*}
Clearly, $\Bar b(x)$ satisfies all the assumptions in
\cite{Arapostathis-Pang-Sandric-2019},
and
\begin{equation*}
	X^x(t)\,=\,x+\int_0^t\Bar b\bigl(X^x(s)\bigr)\,\D s,\qquad t\ge0,\quad x\in\RR\,.
\end{equation*}
A straightforward calculation shows that
\begin{equation*}
	X^x(t) \,=\, \begin{cases}
		\begin{cases}
			x-t \,, & 0\le t\le x \\[2pt]
			\E^{x-t}-1\,, &t\ge x\,,\\
		\end{cases}
		\,, & x\ge 0\,,\\[15pt]
		-1 +\E^{-t}+x\,\E^{-t}\,, &x\le0\,.
	\end{cases}
\end{equation*}
Let
\begin{equation*}
	\mathsf{d}(x,y)\,\df\,\frac{|x-y|}{1+|x-y|}\,,\qquad x,y\in\RR\,.
\end{equation*}
Then it is easy to see that the conditions (1)--(3) in
\cite[Theorem~2.4]{Butkovsky-14} hold.
However, condition (4) does not hold.
Namely, for arbitrary $t_0>0$ let $x,y>t_0$.
Then, $\mathsf{d}\bigl(X^x(t),X^y(t)\bigr)=\mathsf{d}(x,y)$
for all $t_0\le t\le x\wedge y$.

Let us mention that ergodic properties of piecewise Ornstein-Uhlenbeck processes
with jumps in the total variation norm have been considered in
\cite{Arapostathis-Pang-Sandric-2019,DG13,RZ11}.

%%%%%%%%%%%%%%%%%%%%%%%%%%%%%%%%%%%%%%%%%%%%%%%%%%%%%%%%%%%%%%%%%%%%%%%%%%%%%%%%
\subsection{Piecewise Ornstein-Uhlenbeck processes with jumps under
	stationary Markov controls}

In \cref{S3.3} we consider a model with a constant control,
i.e. with the vector
$v\in\Delta\df\{u\in\RR^n\colon u\ge0,\ \langle e,u\rangle=1\}$
being constant and fixed.
If the scheduling policy (control) is a function of the state of the system,
then $v(x)$ in the limiting SDE \cref{PL-SDE} is, in general, a Borel measurable map
from $\RR^n$ to $\Delta$.
We call such a $v(x)$ a stationary Markov control and denote the set of such controls
by $\Usm$.
If $L_t\equiv0$ for $t\ge0$, or it is a compound Poisson process, it follows from the
results in \cite{Gyongy-96} that, under any $v\in\Usm$, \cref{PL-SDE} admits a unique
conservative strong solution which is a strong Markov process with c\`{a}dl\`{a}g
sample paths.
In the general case, we consider the subclass of stationary Markov controls for which
\begin{equation*}
	\Bar b_v(x) \,=\, l-M\bigl(x-\langle e,x\rangle^+v(x)\bigr)
	-\langle e,x\rangle^+\varGamma v(x)\,,
\end{equation*}
is locally Lipschitz continuous.
We let $\tUsm$ denote the class of such controls.
Clearly, for any $v\in\tUsm$, the drift $\Bar b_v(x)$ has at most linear growth.
Other parameters are as in \cref{S3.3}.
Again, the SDE of the form \cref{PL-SDE}, with $\Bar b(x)$ replaced by $\Bar b_v(x)$,
admits a unique conservative strong solution $\process{X}$ which is a strong Markov
process with c\`{a}dl\`{a}g sample paths.
Also, it is an It\^{o} process satisfying \hyperlink{MP}{\ttup{MP}} with
$b(x)=b_L+\Bar b_v(x)$, $a(x)=\sigma(x)\sigma(x)'$, and $\upnu(x,\D y)=\upnu_L(\D y)$.

Recently, in \cite{AHPS19-IMA-long} the authors have studied ergodic properties
with respect to the total variation norm of this model with $\process{L}$ being either
(or a combination of) a rotationally invariant $\alpha$-stable L\'evy process,
an anisotropic L\'evy process consisting of independent one-dimensional symmetric
$\alpha$-stable components, or a compound Poisson process.
Observe that in this situation we cannot follow the procedure from the constant
control case.
Namely, the matrices $Q\in\cM_+$ used in constructing the appropriate Lyapunov
functions $\Lyap(x)$ depend on $v$. 

%%%%%%%%%%%%%%%%%%%%%%%%%%%%%%%%%%%%%%%%%%%%%%%%%%%%%%%%%%%%%%%%%%%%%%%%%%%%%%%%
\begin{proposition}\label{P3.6}
	Grant the assumptions of \cite[Theorem~3.1]{Arapostathis-Pang-Sandric-2019},
	and suppose that
		$M =\diag(m_1,\dotsc,m_n)$,
	with $m_i>0$ for $i=1,\dots,n.$
	\begin{enumerate}
		\item[\ttup{i}]
		Assume that the diagonal components of $\varGamma$ are strictly positive,
		$a(x)$ satisfies \cref{EP3.4A}, 
		and $\process{L}$ is either a rotationally invariant $\alpha$-stable L\'evy process,
		an anisotropic L\'evy process consisting of independent one-dimensional symmetric
		$\alpha$-stable components (in both cases we assume that $\alpha\in(1,2)$),
		or a compound Poisson process satisfying $1\in\Theta_\upnu$.
		We allow $\process{L}$ to have a drift.
		Then, for any $v\in\tUsm$ and $\theta\in[1,\theta_\upnu]\cap\Theta_\upnu$,
		the assertions of \cref{T1.1}\, \ttup{iii} hold true with $\eta=\theta$,
		and $\Lyap(x)=\bigl(\Bar\Lyap(x)\bigr)^\theta+1$, where
		$\Bar\Lyap\in C^2(\RR^n)$ 
		\textup{(}given explicitly in \cite[Definition 1]{AHPS19-IMA-long}\textup{)}
		is bounded from below away from zero, is Lipschitz continuous, and
		satisfies
		\begin{equation*}0\,<\,\liminf_{\abs{x}\to\infty}\frac{\Bar\Lyap(x)}{\abs{x}}\,\le\,
		\limsup_{\abs{x}\to\infty}\frac{\Bar\Lyap(x)}{\abs{x}}\,<\,\infty\,.\end{equation*}
		
		\item [\ttup{ii}]
		Assume $\bigl\langle e,M^{-1}\Tilde l\bigr\rangle<0$,
		where $\Tilde l$ is given in \cref{L},
		$a(x)$ satisfies \cref{A}, 
		and $\process{L}$ is a pure-jump L\'evy process (possibly with drift)
		satisfying $2\in\Theta_\upnu$.
		Then, for any $v\in\tUsm$ and $\theta\in[2,\theta_\upnu]\cap\Theta_\upnu$,
		the assertions of \cref{T1.1}\,\ttup i and \ttup{ii} hold true with
		$\phi(t)= t^{\nicefrac{(\theta-1)}{\theta}}$, $\eta=\theta-1$, and $\Lyap(x)$
		as in \ttup i.
		
		\item [\ttup{iii}] 
		In addition to the assumptions in \ttup{ii} assume that
		$\Tilde\theta_\upnu=\theta_\upnu\in(2,\infty)$, where $\Tilde\theta_\upnu$ is given
		in \cref{THETA}. 
		Then, due to \ttup{ii}, for any $v\in\tUsm$, $\process{X}$ admits a unique invariant
		$\uppi_v\in\cP_{\theta_\upnu-1-\iota}(\RR^n)$ for $\iota\in(0,\theta_\upnu-1)$.
		Next, fix $\rho\in(0,(\theta_\upnu-2)\wedge1)$ and $\varepsilon\in(\rho,1)$.
		Then, for any $v\in \tUsm$ such that $\varGamma v(x)=0$ a.e.,
		$p\in[1,\theta_\upnu-\rho-1]$ and $\iota\in(0,1-\varepsilon)$,
		there exist $\Bar c>0$ and a diverging increasing sequence
		$\{t_n\}_{n\in\NN}\subset[0,\infty)$, depending on these parameters,
		such that \cref{ET1.2A} holds for the corresponding $\uppi_v(\D x)$ with
		$\theta =\theta_\upnu-\rho$, $\vartheta=\theta-1$, and $\Lyap(x)$ as above.
	\end{enumerate}
\end{proposition}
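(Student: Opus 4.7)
The overall strategy is to reduce all three parts to \cref{T1.1,T1.2} by using the ``control-uniform'' Lyapunov function $\Bar\Lyap\in C^2(\RR^n)$ constructed in \cite[Definition~1]{AHPS19-IMA-long}. Because $\Bar\Lyap$ is bounded below away from zero, Lipschitz, and satisfies $\Bar\Lyap(x)\asymp\abs{x}$ at infinity, the candidate $\Lyap(x)=\bigl(\Bar\Lyap(x)\bigr)^{\theta}+1$ automatically satisfies \cref{ET1.1B} with $\eta=\theta$ (resp.\ $\eta=\theta-1$) and $\phi(t)=t$ (resp.\ $\phi(t)=t^{\nicefrac{(\theta-1)}{\theta}}$), because $\phi\circ\Lyap(x)\asymp\abs{x}^{\eta}$ on $\sB^c$. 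In addition, the hypotheses on $\process{L}$ in \ttup{i} and \ttup{ii} are precisely those used in \cite{AHPS19-IMA-long} to establish, for every $v\in\tUsm$, irreducibility, aperiodicity, and that every compact set is petite; these facts, combined with \cite[Proposition~4.3]{Albeverio-Brzezniak-Wu-2010} and \cite[Theorems~5.1 and 7.1]{Tweedie-1994}, give the structural assumptions required by \cref{T1.1,T1.2}.

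For part \ttup{i}, I would invoke the drift estimate from \cite{AHPS19-IMA-long} (proved there for the relevant Lyapunov function and uniformly over $v\in\tUsm$) which, combined with \cref{L2.2}\,\ttup{ii} applied to the $\theta$th power of $\Bar\Lyap$, yields constants $\Hat c,\Breve c,r>0$, independent of $v$, such that
\begin{equation*}
\mathcal{L}\Lyap(x)\,\le\,\Hat c\,\Ind_{\Bar\sB_r}(x)-\Breve c\,\Lyap(x)\qquad \forall\,x\in\RR^n\,.
\end{equation*}
Via \cref{L2.1} this is exactly \cref{ET1.1A} with $\phi(t)=\Breve c\,t$, and \cref{T1.1}\,\ttup{iii} then gives the claim. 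For part \ttup{ii}, the analogous drift computation from \cite{AHPS19-IMA-long}, using that $\bigl\langle e,M^{-1}\Tilde l\bigr\rangle<0$ and the sublinear growth of $a(x)$, produces
\begin{equation*}
\mathcal{L}\Lyap(x)\,\le\,\Hat c\,\Ind_{\Bar\sB_r}(x)-\Breve c\,\bigl(\Lyap(x)\bigr)^{\nicefrac{(\theta-1)}{\theta}}\qquad \forall\,x\in\RR^n\,,
\end{equation*}
where the exponent $(\theta-1)/\theta$ reflects the loss of one power through the jump term (the same mechanism as in \cref{P3.2}\,\ttup{i} and \cref{P3.3}). Passing this to \cref{T1.1}\,\ttup{i}--\ttup{ii} with $\phi(t)=t^{\nicefrac{(\theta-1)}{\theta}}$ and $\eta=\theta-1$ yields the stated subexponential upper bounds.

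For part \ttup{iii}, the plan is to apply \cref{T1.2} with $L(x)=\langle e,M^{-1}x\rangle^+$, $\theta=\theta_\upnu-\rho$, and $\vartheta=\theta-1$. The construction of $\Bar\Lyap$ in \cite{AHPS19-IMA-long} yields $\Lyap(x)\ge c(L(x))^\theta$ and $\phi\circ\Lyap(x)\ge c(L(x))^\vartheta$ on $\sB^c$, so the structural hypotheses of \cref{T1.2} are met. The remaining ingredient is the divergence of the $(\vartheta+\varepsilon)$th moment of $L$ under $\uppi_v$: specifically, I must show
\begin{equation*}
\int_{\RR^n}\bigl(\langle e,M^{-1}x\rangle^+\bigr)^{\vartheta+\varepsilon}\,\uppi_v(\D x)\,=\,\infty
\end{equation*}
for $\varepsilon>\rho$. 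This is where the restriction $\varGamma v(x)=0$ a.e.\ enters: when the abandonment term vanishes, the proof of \cite[Lemma~5.7\,\ttup{b}]{Arapostathis-Pang-Sandric-2019} (extended, as in the proof of \cref{P3.3}, to the regime $\Tilde\theta_\upnu=\theta_\upnu$) can be adapted verbatim to the controlled setting because under $\varGamma v(x)=0$ the drift past the free boundary is still $l-Mx$, which is precisely the structure exploited in that lemma.

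The main obstacle I anticipate is the last one: verifying that the moment-divergence argument of \cite[Lemma~5.7\,\ttup{b}]{Arapostathis-Pang-Sandric-2019} genuinely carries over for every admissible $v\in\tUsm$ with $\varGamma v\equiv0$. That lemma uses a Foster--Lyapunov lower-bound inequality for $\langle e,M^{-1}X(t)\rangle^+$, and one must check that the control, which may oscillate rapidly on the set $\{\langle e,x\rangle>0\}$, does not interfere with the lower bound on the mean one-step displacement that drives the moment blow-up. Once this uniform-in-$v$ lower drift estimate is established (essentially an inspection of the cited proof together with $\varGamma v\equiv0$), the remainder of the argument is a direct application of \cref{T1.2}.
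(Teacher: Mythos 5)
Your plan is essentially the same as the paper's: invoke the drift estimates from \cite{AHPS19-IMA-long} (which hold uniformly in the state but with $v$-dependent constants) to get geometric and subgeometric Foster--Lyapunov conditions for $\Lyap=\Bar\Lyap^{\theta}+1$, pass them through \cref{T1.1}\,\ttup{iii} and \cref{T1.1}\,\ttup{i}--\ttup{ii} respectively, and for \ttup{iii} combine the moment-divergence Lemma~5.7\,\ttup{b} of \cite{Arapostathis-Pang-Sandric-2019} with \cref{T1.2}. Two small remarks. First, in part \ttup{iii} the paper applies \cref{T1.2} with $L(x)=\Bar\Lyap(x)$ rather than your choice $L(x)=\langle e,M^{-1}x\rangle^+$; both work since $\Bar\Lyap(x)\gtrsim\langle e,M^{-1}x\rangle^+$ at infinity, so the divergence of $\int(\langle e,M^{-1}x\rangle^+)^{\vartheta+\varepsilon}\uppi_v(\D x)$ transfers to $\Bar\Lyap$, but this comparison step is something you'd need to state. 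Second, the ``main obstacle'' you flag at the end is in fact not an obstacle: when $\varGamma v(x)\equiv 0$ the scalar drift $\langle e,M^{-1}\Bar b_v(x)\rangle = \langle e,M^{-1}l\rangle - \langle e,x\rangle + \langle e,x\rangle^+$ is independent of $v$ (since $M^{-1}Mv(x)=v(x)$ and $\langle e,v(x)\rangle=1$), so the one-dimensional drift inequality underlying Lemma~5.7\,\ttup{b} is exactly the same as in the constant-control case and no uniformity-in-$v$ argument is needed. Also, your citation of \cref{L2.2}\,\ttup{ii} in part \ttup{i} is unnecessary: the relevant drift inequality is already established in the form $\mathcal{L}(\Bar\Lyap^\theta)\le\Bar c-\Tilde c\,\Bar\Lyap^\theta$ in \cite{AHPS19-IMA-long}, so one simply cites it and rewrites it with a petite compact set, rather than re-deriving the jump-term estimate.
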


\begin{proof}
	\begin{itemize}
		\item [\ttup {i}]
		Observe first that in the case when $\process{L}$ is a rotationally invariant
		$\alpha$-stable L\'evy process or an anisotropic L\'evy process consisting of
		independent one-dimensional symmetric $\alpha$-stable components,
		$\Theta_\upnu=[0,\alpha)$.
		In \cite[Theorem~3 and the discussion after Theorem~5]{AHPS19-IMA-long} 
		it has been shown that for any $v\in\tUsm$ and
		$\theta\in[1,\theta_\upnu]\cap\Theta_\upnu$ there exist
		$\Bar c=\Bar c(\theta,v)>0$ and $\Tilde c=\Tilde c(\theta,v)>0$,
		such that
		\begin{equation*}\mathcal{L} \bigl(\Bar\Lyap^\theta\bigr)(x)\,\le\,
		\Bar c-\Tilde c\, \bigl(\Bar\Lyap(x)\bigr)^{\theta}\qquad \forall\,x\in\RR^n\,.\end{equation*} 
		It is easy to see that the above relation implies that there exist $r>0$,
		$\Hat c>0$, and $\Breve{c}>0$, such that
		\begin{equation*}\mathcal{L} \Lyap(x)\,\le\, \Hat c\,\Ind_{\Bar{\sB}_r}(x)-\Breve c\, \Lyap(x)
		\qquad \forall\,x\in\RR^n\,.\end{equation*}
		The assertion then follows from \cref{T1.1}\,\ttup{iii}, together with
		\cite[Proposition 4.3]{Albeverio-Brzezniak-Wu-2010},
		\cite[Theorem~3.4]{Douc-Fort-Guilin-2009},
		and \cite[Theorems 5.1 and 7.1]{Tweedie-1994}. 
		
		\item [\ttup {ii}]
		In Theorem~5 and the discussion following the proof of this theorem in
		\cite{AHPS19-IMA-long} 
		it has been shown that for any $v\in\tUsm$ and
		$\theta\in(1,\theta_\upnu]\cap\Theta_\upnu$ there exist $r=r(\theta,v)>0$,
		$\Bar c=\Bar c(\theta,v)>0$, and $\Tilde c=\Tilde c(\theta,v)>0$,
		such that
		\begin{equation*}\mathcal{L} \bigl(\Bar\Lyap^\theta\bigr)(x)\,\le\,
		\Bar c\,\Ind_{\overline\sB_r}-\Tilde c\, \bigl(\Bar\Lyap(x)\bigr)^{\theta-1}
		\qquad \forall\,x\in\RR^n\,.\end{equation*}
		It is easy to see that the above relation implies that there exist
		$\Hat r>0$, $\check c>0$, and $\Breve{c}>0$, such that
		\begin{equation*}\mathcal{L} \Lyap(x)\,\le\, \check c\,\Ind_{\Bar{\sB}_{\Hat r}}(x)
		-\Breve c\, \bigl(\Lyap(x)\bigr)^{\nicefrac{(\theta-1)}{\theta}}
		\qquad \forall\,x\in\RR^n\,,\end{equation*}
		with $\Lyap(x)$ given as above.
		The assertion now follows from \cref{T1.1}\,\ttup{i} and \ttup{ii}, together with
		the results from \cite{Albeverio-Brzezniak-Wu-2010,Douc-Fort-Guilin-2009,Tweedie-1994}
		cited in part \ttup{i}. 
		
		\item [\ttup{iii}]
		Clearly, $\vartheta+\varepsilon>\theta_\upnu-1$.
		Thus, according to \cite[Lemma~5.7\,\ttup{b}]{Arapostathis-Pang-Sandric-2019},
		\begin{equation*}\int_{\RR^n}\bigl(\langle e,M^{-1}x\rangle^+\bigr)^{\vartheta+\varepsilon}\,
		\uppi_v(\D x)\,=\,\infty\,.\end{equation*}
		The assertion now follows from (the proof of) \ttup{ii}
		(together with the results from
		\cite{Albeverio-Brzezniak-Wu-2010,Douc-Fort-Guilin-2009,Tweedie-1994}
		cited in part \ttup{i}),
		and \cref{T1.2} by setting $L(x)=\Bar\Lyap(x)$ and
		$\phi(t)= t^{\nicefrac{(\theta-1)}{\theta}}$.\qedhere
	\end{itemize}
\end{proof}

As discussed in \cref{S3.3}, the hypothesis that $\Tilde\theta_\upnu=\theta_\upnu$ is true if $\process{L}$
is a compound Poisson process (possibly with drift) with L\'evy measure
$\upnu_L(\D y)$ supported on a half-line of the form
$\{tw\colon t\in[0,\infty)\}$ with $\langle e,M^{-1}w\rangle>0$.

Ergodic properties in the $f$-norm
of  piecewise Ornstein-Uhlenbeck processes with jumps under
stationary Markov controls
have been considered in \cite{arapostathis2018uniform,AHPS19-IMA-long}.

%%%%%%%%%%%%%%%%%%%%%%%%%%%%%%%%%%%%%%%%%%%%%%%%%%%%%%%%%%%%%%%%%%%%%%%%%%%%%%%%
\subsection{ State-space models}

Let $F\colon\RR^n\to\RR^n$ be continuous, and such that $\abs{F(x)}\le c\abs{x}$
for some $c>0$ and all $x\in\RR^n.$
Further, let $X(0)$ be an $\RR^n$-valued random variable, and let $\{W(k)\}_{k\in\NN}$
be a sequence of i.i.d. $\RR^n$-valued random variables independent of $X(0)$.
Assume that the common distribution of $\{W(k)\}_{k\in\NN}$ has a nontrivial
absolutely continuous component which is bounded away from zero in a neighborhood
of the origin.
Then the Markov process defined by
\begin{equation*}X(k+1)\,=\, F\bigl(X(k)\bigr)+W(k+1)\,, \qquad k\ge0\,,\end{equation*}
is irreducible, aperiodic, and all compact sets are petite
(see \cite[Proposition 5.2]{TT-94}).
Further, assume that there exist constants $l\in\NN$, $l\ge2$, $\varepsilon\in(0,1)$,
and $\Bar c,r>0$, such that
\begin{equation*}\Exp\bigl[\abs{W_1}^l\bigr]\,<\,\infty\,, \qquad \text{and} \qquad \abs{F(x)}
\,\le\,c\abs{x}-\Bar c\abs{x}^{1-\varepsilon}\quad \forall\,x\in\sB_r^c\,.\end{equation*} 

%%%%%%%%%%%%%%%%%%%%%%%%%%%%%%%%%%%%%%%%%%%%%%%%%%%%%%%%%%%%%%%%%%%%%%%%%%%%%%%%
\begin{proposition} \label{P3.7}
	Under the above assumptions, the assertions of \cref{T1.1}\,\ttup i and \ttup{ii}
	hold with $\Lyap(x)= \abs{x}^l$, $\phi(t)= t^{\nicefrac{(l-1)}{l}}$,
	and $\eta= l-1$.
\end{proposition}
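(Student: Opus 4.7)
The plan is to verify the hypotheses of \cref{T1.1}\,\ttup i and \ttup{ii} and then invoke the theorem. Irreducibility, aperiodicity, and the petiteness of all compact sets are already part of the assumptions, so my task reduces to establishing the Foster-Lyapunov drift \cref{ET1.1A}, the boundedness of $\Lyap$ on the petite set, and the polynomial lower bound \cref{ET1.1B}. I will take $\Lyap(x)\df\abs{x}^l+1$, $\phi(t)\df t^{(l-1)/l}$, and petite set $C\df\overline{\sB}_R$ for some $R\ge r$ to be fixed. Since $\T=\Z_+$ and $[0,1)\cap\Z_+=\{0\}$, the integral form \cref{ET1.1A} at $t=1$ reduces to the one-step inequality
\begin{equation*}
\Exp_x\bigl[\Lyap(X_1)\bigr]-\Lyap(x)\,\le\, b\,\Ind_C(x)-\phi\circ\Lyap(x)\,,
\end{equation*}
which is the key inequality to verify.

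The core calculation is the drift on $\overline{\sB}_R^c$. I would expand via the binomial theorem applied to $(\abs{F(x)}+\abs{W_1})^l$,
\begin{equation*}
\Exp\bigl[\abs{F(x)+W_1}^l\bigr] \,\le\, \sum_{k=0}^{l}\binom{l}{k}\abs{F(x)}^{l-k}\Exp\bigl[\abs{W_1}^k\bigr]\,,
\end{equation*}
where the moments $\Exp[\abs{W_1}^k]$ for $0\le k\le l$ are all finite by Jensen's inequality. On $\sB_r^c$ I would apply the hypothesis $\abs{F(x)}\le\abs{x}-\Tilde c\abs{x}^{1-\varepsilon}$ (the leading coefficient in the hypothesis must be read as $1$ for the polynomial rate regime; a strict contraction would fall under the geometric case of \cref{T1.1}\,\ttup{iii}) together with Bernoulli's inequality to obtain
\begin{equation*}
\abs{F(x)}^l \,\le\, \abs{x}^l-l\Tilde c\,\abs{x}^{l-\varepsilon}+\order\bigl(\abs{x}^{l-2\varepsilon}\bigr)\,,
\end{equation*}
while the $k\ge 1$ contribution to the sum is $\order\bigl(\abs{x}^{l-1}\bigr)$. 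Since $\varepsilon\in(0,1)$, the term $-l\Tilde c\abs{x}^{l-\varepsilon}$ dominates both $\abs{x}^{l-2\varepsilon}$ and $\abs{x}^{l-1}$ at infinity; enlarging $R$ gives $\Exp_x[\Lyap(X_1)]-\Lyap(x)\le -c_0\abs{x}^{l-1}\le -c_1\,\phi\circ\Lyap(x)$ on $\overline{\sB}_R^c$ for some $c_0,c_1>0$. On the compact set $C$, continuity of $F$ and $\Exp[\abs{W_1}^l]<\infty$ immediately yield $\Exp_x[\Lyap(X_1)]\le b'$, completing the drift condition after absorbing $c_1$ into $\phi$ (a positive multiplicative constant does not alter the asymptotic rate $\phi\circ\Phi^{-1}$).

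The remaining hypotheses are routine: $\phi$ is nondecreasing, concave, and differentiable with $\phi'(t)=\tfrac{l-1}{l}\,t^{-1/l}\to 0$; $\Lyap$ is bounded on the compact set $C$; and $\phi\circ\Lyap(x)=(\abs{x}^l+1)^{(l-1)/l}\ge c(1+\abs{x})^{l-1}$ for some $c>0$, so \cref{ET1.1B} holds with $\eta=l-1\ge 1$. Invoking \cref{T1.1}\,\ttup i and \ttup{ii} then delivers the claimed ergodicity; a direct computation gives $\Phi(t)=l(t^{1/l}-1)$ and $r(t)=\phi\circ\Phi^{-1}(t)\asymp t^{l-1}$. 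The main obstacle is the binomial bookkeeping in the drift computation, but this is routine because $l-\varepsilon>l-1\ge l-k$ for every $k\ge 1$ when $\varepsilon<1$, which guarantees that the cross terms $\binom{l}{k}\abs{F(x)}^{l-k}\Exp[\abs{W_1}^k]$ are absorbed by the leading negative correction $-l\Tilde c\abs{x}^{l-\varepsilon}$.
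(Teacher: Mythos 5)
Your argument is correct, and it establishes the same intermediate result the paper leans on, but the route is slightly more self-contained: the paper's proof is a one-line citation of \cite[Proposition~5.2]{TT-94} for the drift inequality \cref{ET1.1A}, then invokes \cref{T1.1}\,\ttup i and \ttup{ii}; you instead re-derive the drift from scratch. Your binomial decomposition of $\Exp\bigl[(|F(x)|+|W_1|)^l\bigr]$, the bound $|F(x)|^l\le|x|^l-l\Tilde c\,|x|^{l-\varepsilon}+\order(|x|^{l-2\varepsilon})$ on $\sB_r^c$ (which, since $l$ is an integer, follows from the binomial theorem with alternating-term truncation once $\Tilde c|x|^{-\varepsilon}\le 1/l$), and the observation that the cross terms are $\order(|x|^{l-1})$ and absorbed by the $-|x|^{l-\varepsilon}$ correction because $\varepsilon<1$, together give a drift of order $-|x|^{l-\varepsilon}$ outside a large ball. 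This is actually slightly stronger than the $-\phi\circ\Lyap(x)\asymp-|x|^{l-1}$ the proposition asserts, so the stated $\phi(t)=t^{(l-1)/l}$ is a valid (if not sharpest) choice, and by enlarging $R$ you can take the multiplicative constant to be $1$, so the rescaling remark is not even needed. Two minor points of agreement with the paper's shorthand that you handled correctly: the coefficient $\Bar c$ in the hypothesis must be read as $1$ (otherwise the model is either geometrically ergodic or unstable, and this is how \cite[Proposition~5.2]{TT-94} is formulated), and $\Lyap$ must be taken as $|x|^l+1$ (or $|x|^l\vee 1$) so that $\Lyap\ge 1$ as required by \cref{T1.1}. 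Your checks of concavity, $\phi'(t)\to 0$, \cref{ET1.1B} with $\eta=l-1\ge 1$, petiteness of $C$, and the computation $r(t)=(1+t/l)^{l-1}\asymp t^{l-1}$ are all correct.
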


\begin{proof}
	In \cite[Proposition 5.2]{TT-94} it has been proved that the Foster-Lyapunov
	condition in \cref{ET1.1A} holds with $\Lyap(x)$ and $\phi(t)$ as above,
	and $C=\sB_{\Bar r}$ for some $\Bar r>0$.
	The result now follows from \cref{T1.1}\,\ttup i and \ttup{ii}.
\end{proof}

Ergodic properties of state-space models in the $f$-norm have been
studied in \cite{TT-94,Fort-03}.

%%%%%%%%%%%%%%%%%%%%%%%%%%%%%%%%%%%%%%%%%%%%%%%%%%%%%%%%%%%%%%%%%%%%%%%%%%%%%%%%
\subsection{Backward recurrence time chain}

Let $\{p_i\}_{i\ge0}\subset(0,\infty)$ be such that $p_0=1$, $p_i<1$ for $i\in\NN$,
and $\prod_{j=0}^ip_j\to0$, as $i\to\infty$.
Let $\{X(k)\}_{k\ge0}$ be a Markov process on $\{0,1,\dotsc\}$ defined by the
transition kernel $p(i,i+1)=1-p(i,0)\df p_i$ for $i\ge0$.
The process $\{X(k)\}_{k\ge0}$ is irreducible and aperiodic, and it admits a
unique invariant $\uppi\in\cP(\{0,1,\dotsc\})$ if, and only, if
\begin{equation*}c\,\df\,\sum_{i=1}^\infty\prod_{j=1}^ip_j\,<\,\infty\,.\end{equation*}
In this case, $\uppi(0)=\uppi(1)=(2+c)^{-1}$,
and $\uppi(i)=(2+c)^{-1}\prod_{j=0}^{i-1}p_j$ for $i\ge2$.

%%%%%%%%%%%%%%%%%%%%%%%%%%%%%%%%%%%%%%%%%%%%%%%%%%%%%%%%%%%%%%%%%%%%%%%%%%%%%%%%
\begin{proposition}\label{P3.8}
	\begin{enumerate}
		\item [\ttup i]
		If there exist $i_0\in\NN$ and $\alpha >1$, such that $p_i=\frac{1+\alpha}{i}$
		for $i\ge i_0$,
		then the assertions of \cref{T1.1}\,\ttup i and \ttup{ii} hold
		with
		\begin{equation*}
			\Lyap (i)= i^{\beta(1+\alpha)}+1\,,\quad
			\phi(t)= t^{1-\frac{1}{\beta(1+\alpha)}}\,,\quad\text{and}\quad
			\eta= \beta(1+\alpha)-1\quad\text{for\ \ }
			\beta\in [\nicefrac{2}{(1+\alpha)},1)\,.
		\end{equation*} 
		
		\item [\ttup{ii}]
		Under the assumptions in \ttup{i}, $\uppi\in\cP_{\alpha-\iota}(\{0,1,\dotsc\})$ for
		$\iota\in(0,\alpha)$.
		Next, fix $\rho\in(0,(\alpha-1)\wedge1)$ and $\varepsilon\in[\rho,1)$.
		Then, for every $p\in[1,\alpha-\rho]$ and $\iota\in(0,1-\varepsilon)$ there
		exist a positive constant $c$ and a diverging increasing sequence
		$\{t_n\}_{n\in\NN}\subset[0,\infty)$, depending on these parameters,
		such that \cref{ET1.2A} holds with $\Lyap(i)$ as above,
		$\theta= 1+\alpha-\rho$, and $\vartheta= \alpha-\rho$.
	\end{enumerate}
\end{proposition}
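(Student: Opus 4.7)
The plan is to invoke \cref{T1.1}\,\ttup i and \ttup{ii} for part \ttup i, and \cref{T1.2} for part \ttup{ii}. The chain is irreducible and aperiodic on the countable state space $\{0,1,\dotsc\}$, so every finite subset is (small, in particular) petite; the relevant petite set $C$ will be of the form $\{0,1,\dotsc,i_1\}$ with $i_1\ge i_0$ to be fixed below.

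For part \ttup i, I would work with $\Lyap(i)\df i^{\beta(1+\alpha)}+1$ and $\phi(t)\df t^{1-1/[\beta(1+\alpha)]}$, and verify the Foster--Lyapunov inequality \cref{ET1.1A}. Writing $\gamma\df\beta(1+\alpha)$ and using $\Lyap(0)=1$, the one-step drift reduces to
\begin{equation*}
	P\Lyap(i)-\Lyap(i)\,=\,p_i(i+1)^{\gamma}-i^{\gamma}\,.
\end{equation*}
Expanding $(i+1)^{\gamma}=i^{\gamma}(1+1/i)^{\gamma}$ to two orders in $1/i$ and substituting the prescribed asymptotic form of $p_i$ gives a leading term of order $-i^{\gamma-1}$ with a strictly negative coefficient, which uses $\beta<1$ and $\alpha>1$. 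Since $\phi\circ\Lyap(i)$ is of order $i^{\gamma-1}$, this yields $P\Lyap(i)-\Lyap(i)\le -c\,\phi\circ\Lyap(i)$ for all $i$ beyond some threshold $i_1$, while on the finite petite set $C=\{0,\dotsc,i_1\}$ the drift is trivially bounded above by a constant $b$. Iterating the one-step bound over $[0,t)\cap\Z_+$ gives \cref{ET1.1A}, and \cref{ET1.1B} is clear from $\phi\circ\Lyap(i)/(1+i)^{\gamma-1}\to 1$. An application of \cref{T1.1}\,\ttup i and \ttup{ii} with $\eta=\gamma-1$ then yields the stated rates.

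For part \ttup{ii}, I would use the product formula $\uppi(i)=(2+c)^{-1}\prod_{j=0}^{i-1}p_j$, together with the prescribed asymptotic of $p_i$, to identify a Pareto-type tail $\uppi(i)\asymp i^{-(1+\alpha)}$, from which $\uppi\in\cP_{\alpha-\epsilon}$ for every $\epsilon\in(0,\alpha)$. To fit the hypotheses of \cref{T1.2}, take $L(i)\df i$, which is Lipschitz with constant $1$, and set $\theta\df 1+\alpha-\rho$ and $\vartheta\df \alpha-\rho$; the requirement $\theta>\vartheta\ge1$ is guaranteed by $\rho<(\alpha-1)\wedge1$, and both comparisons $\Lyap(i)\ge c\,L(i)^{\theta}$ and $\phi\circ\Lyap(i)\ge c\,L(i)^{\vartheta}$ reduce to $\gamma\ge 1+\alpha-\rho$, which is ensured by choosing $\beta$ close enough to $1$ within $[2/(1+\alpha),1)$. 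The divergence condition becomes
\begin{equation*}
	\sum_{i\ge 1}i^{\vartheta+\varepsilon}\,\uppi(i)\,\asymp\,
	\sum_{i\ge 1}i^{\varepsilon-\rho-1}\,=\,\infty\,,
\end{equation*}
which holds because $\varepsilon\ge\rho$. \cref{T1.2} then delivers the advertised lower bound for every $p\in[1,\alpha-\rho]$ and every $\epsilon\in(0,1-\varepsilon)$.

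The hard part, I expect, is neither the drift estimate (a routine Taylor expansion) nor the tail analysis of $\uppi$ (a product of standard asymptotics), but the bookkeeping needed to ensure that a single Lyapunov exponent $\beta\in[2/(1+\alpha),1)$ is simultaneously admissible for the upper bound in part \ttup i and the lower bound in part \ttup{ii}. The specified range for $\beta$ gives just enough room for this, and the boundary case $\beta\to 1$ matches the critical exponent of the Pareto tail, which is precisely what makes the upper and lower bounds of \cref{T1.1,T1.2} meet.
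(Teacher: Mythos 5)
Your proof is correct and follows essentially the same route as the paper: for part~(i), verify the Foster--Lyapunov drift condition (here via a direct Taylor expansion, where the paper instead cites \cite[Section~3]{DFMS-04}) and apply \cref{T1.1}; for part~(ii), identify the Pareto tail $\uppi(i)\asymp i^{-(1+\alpha)}$ and apply \cref{T1.2} with $L(i)=i$. Two of your observations are worth recording. Your leading-order drift term $\sim(1+\alpha)(\beta-1)\,i^{\gamma-1}$ presupposes $p_i\to1$, so you have (correctly) read the displayed $p_i=\frac{1+\alpha}{i}$ as a misprint for $p_i=1-\frac{1+\alpha}{i}$; the literal reading gives a geometric drift and a factorially decaying $\uppi$, contradicting both the cited DFMS-04 example and the divergence claim needed in part~(ii). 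You also note that the hypothesis $\Lyap(i)\ge c\,L(i)^\theta$ of \cref{T1.2} forces $\gamma=\beta(1+\alpha)\ge\theta=1+\alpha-\rho$, i.e.\ $\beta\ge1-\rho/(1+\alpha)$; the paper leaves this constraint implicit, but it is satisfiable within $[2/(1+\alpha),1)$ precisely because $\rho<\alpha-1$.
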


\begin{proof}
	\begin{itemize}
		\item [\ttup i]
		In \cite[Section~3]{DFMS-04} it has been shown that the Foster-Lyapunov condition
		in \cref{ET1.1A} holds with a Lyapunov function $\Bar\Lyap(i)$ which asymptotically
		behaves like $\Lyap(i)$, $\phi(t)$ as above, and $C$ being a finite set for any
		$\alpha>0$ and $\beta\in(0,1)$.
		Taking into account \cref{ET1.1B}, the assertion follows.
		
		\item[\ttup {ii}]
		From the assumptions on the sequence $\{p_i\}_{i\ge0}$ we see that
		$\lim_{i\to\infty}i^{1+\alpha}\,\uppi(i)>0$.
		Now, since $\vartheta+\varepsilon-1-\alpha\ge-1$, we have 
$\sum_{i=0}^\infty i^{\vartheta+\varepsilon}\,\uppi(i)\,=\,\infty$.
		The assertion now follows from \cref{T1.2} by taking $L(i)= i$.\qedhere
	\end{itemize}
\end{proof}

%%%%%%%%%%%%%%%%%%%%%%%%%%%%%%%%%%%%%%%%%%%%%%%%%%%%%%%%%%%%%%%%%%%%%%%%%%%%%%%%
\section{Concluding Remarks}

We remark on some other approaches
in the study of exponential or subexponential ergodicity of Markov processes.
By analyzing polynomial moments of hitting times of compact sets directly,
polynomial ergodicity results
are established in \cite[Theorem~6]{Veretennikov-1997}
for a class of irreducible (with respect to the Lebesgue measure)
and aperiodic diffusion processes.
In a follow-up work, by using analogous techniques, the same author established
polynomial ergodicity of a class of diffusion processes without directly assuming
irreducibility and aperiodicity of the process, but employing instead a so-called
(local) Dobrushin condition (also known as Markov-Dobrushin condition)
\cite[Theorem~6]{Veretennikov-2000}.
This approach is based on a Foster-Lyapunov condition of the form \cref{ET1.1A},
and instead of  assuming irreducibility and aperiodicity of $\process{X}$, it is
assumed that (i) $\Lyap(x)$ has precompact sub-level sets, and (ii) for every
$\delta>0$ there exists $t_\delta\in\T\setminus\{0\}$ such that
\begin{equation*}
	\sup_{(x,y)\in \{(u,v)\colon\,\Lyap(u)+\Lyap(v)\le\delta\}}\,
	\bnorm{p(t_\delta,x,\D z)-p(t_\delta,y,\D z)}_{\mathrm{TV}} \,<\, 1\,,
\end{equation*}
(see \cite[Chapter 3]{Kulik-Book-2018}).
Observe that this condition actually means that for each $(x,y)$ satisfying
$\Lyap(x)+\Lyap(y)\le\delta$ the probability measures $p(t_\delta,x,\D z)$
and $p(t_\delta,y,\D z)$ are not mutually singular.
Intuitively, the Dobrushin condition encodes irreducibility and aperiodicity of
$\process{X}$, and petiteness of sub-level sets of $\Lyap(x)$. 
Based on these assumptions, and using an appropriate Markov coupling of $\process{M}$,
it follows that the $\Phi^{-1}$-modulated moment of the corresponding coupling
time is finite and controlled by $\Lyap(x)+\Lyap(y)$.
This then implies (sub)geometric ergodicity of $\process{X}$ in the total variation
norm (see \cite[Theorem~4.1]{Hairer-Lecture-notes-2016}
or \cite[Chapter 3]{Kulik-Book-2018}). 

We remark that irreducibility and aperiodicity (together with \cref{ET1.1A}) imply
that the Dobrushin condition holds on the Cartesian product of any petite set with itself.
Namely, according to \cite[Proposition 6.1]{Meyn-Tweedie-AdvAP-II-1993}, for any petite
set $C$ there exists $t_C\in\T\setminus\{0\}$ such that for the measure
$\upchi(\D t)$ (in the definition of petiteness) the Dirac measure in $t_C$ can be
taken (together with some non-trivial measure $\upnu_\upchi(\D x)$).
Thus, $p(t_C,x,B)\ge\upnu_\upchi(B)$ for any $x\in C$ and $B\in\mathfrak{B}(\X)$,
which implies that
\begin{equation}\label{DOB}
\sup_{(x,y)\in C\times C}\, \bnorm{p(t_C,x,\D z)-p(t_C,y,\D z)}_{{\mathrm{TV}}}
\,<\, 1\,.
\end{equation}
If, in addition, $\process{X}$ is
$C_b$-Feller (i.e. $x\mapsto \int_{ \X}f(y)\,p(t,x,\D y)$ is continuous
and bounded for any $t\in\T$ and any continuous and bounded function $f(x)$),
and the support of the corresponding irreducibility measure has nonempty interior,
then every compact set is petite (see \cite[Theorems 5.1 and 7.1]{Tweedie-1994})
and thus \cref{DOB} holds for any bounded set $C$.
This shows that,
at least in this particular situation, the approach based on the Dobrushin condition
is more general than the approach based on irreducibility and aperiodicity.
Situations where it has a clear advantage are discussed in
\cite{Kulik-2009,Abourashchi-Veretennikov-2010}.
In \cite{Kulik-2009}, the author considers
a Markov process obtained as a solution to a L\'evy-driven SDE with highly
irregular coefficients and noise term; while in
\cite{Abourashchi-Veretennikov-2010}, a diffusion process with highly irregular
(discontinuous) drift function and uniformly elliptic diffusion coefficient has been
considered.
In these concrete situations it is not clear whether one can obtain irreducibility
and aperiodicity of the processes, whereas the authors obtain \cref{DOB} for any
compact set $C$ (see \cite[Theorem~1.3]{Kulik-2009}
and \cite[Lemma~3]{Abourashchi-Veretennikov-2010}).
For additional results
on ergodic properties of Markov processes based on the Dobrushin condition
we refer the readers to \cite{Hairer-Lecture-notes-2016,Kulik-Book-2018}.

%%%%%%%%%%%%%%%%%%%%%%%%%%%%%%%%%%%%%%%%%%%%%%%%%%%%%%%%%%%%%%%%%%%%%%%%%%%%%%%%
\section*{Acknowledgements}
We thank the anonymous referee for the helpful comments
that have led to significant improvements of the results in the article.
This research was supported in part by 
the Army Research Office through grant W911NF-17-1-001,
and in part by the National Science Foundation through grants DMS-1715210,
CMMI-1635410, and DMS-1715875.
Financial support through the Alexander von Humboldt Foundation (No. HRV 1151902 HFST-E) and the
Croatian Science Foundation under the project 8958
(for N. Sandri\'c) is gratefully acknowledged.

%%%%%%%%%%%%%%%%%%%%%%%%%%%%%%%%%%%%%%%%%%%%%%%%%%%%%%%%%%%%%%%%%%%%%%%%%%%%%%%%
%\bibliographystyle{imsart-number}
%\bibliography{References}

\def\cprime{$'$}

\end{document}